\theoremstyle{plain}
\newtheorem{theorem}{Theorem}[section]
\newtheorem{lemma}[theorem]{Lemma}
\theoremstyle{definition}
\newtheorem{definition}[theorem]{Definition}
\newtheorem{remark}[theorem]{Remark}
\newtheorem{example}[theorem]{Example}
\newtheorem{proposition}[theorem]{Proposition}
\renewcommand{\Im}{\ensuremath{\mathop{\mathfrak{Im}}\nolimits}}
\newcommand{\id}{\ensuremath{\mathop{\mathrm{id}}}}
\newcommand{\bC}{\ensuremath{\mathbb{C}}}
\newcommand{\bP}{\ensuremath{\mathbb{P}}}
\newcommand{\bR}{\ensuremath{\mathbb{R}}}
\newcommand{\bZ}{\ensuremath{\mathbb{Z}}}
\newcommand{\scA}{\ensuremath{\mathcal{A}}}
\newcommand{\scF}{\ensuremath{\mathcal{F}}}
\newcommand{\scL}{\ensuremath{\mathcal{L}}}
\newcommand{\ch}{\mathsf{ch}}
\newcommand{\ii}{\sqrt{-1}}
\newcommand{\eps}{\varepsilon}
\newcommand{\pv}{\operatorname{pvarg}}
\newcommand{\Sep}{\operatorname{Sep}}
\newcommand{\Ker}{\operatorname{Ker}}
\newcommand{\pr}{\operatorname{pr}}
\title{Minimal stratifications for line arrangements 
and positive homogeneous presentations for fundamental groups}
\author{Masahiko Yoshinaga}
\date{\today}
\begin{document}
\maketitle

\begin{abstract}
The complement of a complex hyperplane arrangement 
is known to be homotopic to a minimal CW complex. 
There are several approaches to the minimality. 
In this paper, we restrict our attention to 
real two dimensional cases, and introduce 
the ``dual'' objects so called minimal 
stratifications. The strata are explicitly 
described as semialgebraic sets. The stratification 
induces 
a partition of the complement into a disjoint 
union of contractible spaces, which is minimal 
in the sense that the number of codimension $k$ 
pieces equals the $k$-th Betti number. 

We also discuss presentations for the fundamental 
group associated to the minimal stratification. 
In particular, we show that the fundamental groups 
of complements of a real arrangements have 
positive homogeneous presentations. 
\end{abstract}

\section{Introduction}
\label{sec:intro}

In 1980s Randell found an algorithm for presenting 
the fundamental group of the complement $M(\scA)$ 
of arrangement $\scA$ of complexified lines in $\bC^2$ 
(\cite{ran-fun, fal-hom}). 
Various algorithms for doing this 
were found subsequently 
(\cite{arv-fun, coh-suc, moi-tei}). 
It was observed that 
these presentations are minimal in the 
sense that the numbers of generators and relations are 
equal to $b_1(\pi_1)$ and $b_2(\pi_1)$, respectively, 
(c.f. $b_i(M)=b_i(\pi_1(M(\scA)))$ for $i\leq 2$ 
\cite{ran-hom}) 
and 
several presentations are homotopic to $M(\scA)$. 
(It is not clear to the author that whether or not 
every minimal presentation is homotopic to 
$M(\scA)$, which is true for braid-monodromy presentation 
\cite{lib}.) 

These works have been partially generalized to higher 
dimensional cases. 
Let $\scA$ be an arrangement of hyperplanes in $\bC^\ell$. 
The complement $M(\scA)=\bC^\ell\setminus\scA$ is proved 
to be homotopic to a minimal CW complex, that is, 
a finite CW complex in which the number of $p$-cells 
equals the $p$-th Betti number \cite{ps-h, dim-pap, ran-mor}. 
The minimality is expected to have applications to 
topological problems of arrangements. 
In order to apply, we need to make explicit how cells 
in the minimal CW complex are attached. 
There are two approaches to describe 
the minimal structure of $M(\scA)$, 
one is based on classical Morse theoretic 
study of Lefschetz's theorem on hyperplane 
section \cite{yos-lef}, the other is based on 
discrete Morse theory of Salvetti complex 
\cite{sal-sett, del-min}. 
There are also 
some applications to computations of local system 
(co-)homology groups \cite{gai-sal, yos-ch, yos-loc}.

The purpose of this paper is to describe 
the ``dual'' object to the minimal CW complex 
for $\ell=2$. 
We introduce the minimal stratification 
$M(\scA)=X_0\supset X_1\supset X_2$ for 
the complement $M(\scA)$ such that 
\begin{itemize}
\item $X_0\setminus X_1=U$ is a contractible $4$-manifolds, 
\item $X_1\setminus X_2=\bigsqcup_{i=1}^{b_1(M)}S_i^\circ$ 
is a disjoint union of contractible $3$-manifolds, 
such that the number of pieces is equal to the 
$1$st Betti number $b_1(M)$, and 
\item $X_2=\bigsqcup_{\lambda=1}^{b_2(M)}C_\lambda$ 
is a disjoint union of 
contractible $2$-manifolds (chambers), such that 
the number of pieces is equal to the $2$nd Betti 
number $b_2(M)$. 
\end{itemize}
(see Theorem \ref{thm:main} for details). 
We describe explicitly the strata as semialgebraic sets. 
For such stratification, we can take generators 
and relations of $\pi_1(M)$ which are dual to 
the strata. By analyzing the incidence relation of 
strata, we obtain a presentation for $\pi_1$ which is not 
rely on braid monodromy or Zariski-van Kampen method. 
The resulting presentation has only positive homogeneous 
relations.

This paper is organized as follows. 
In \S\ref{sec:1dim}, as a motivating example, 
we compare the minimal stratification with 
Morse theoretic description of minimal CW complex 
for a very simple example: two points 
$\{0, 1\}$ in $\bR$. 
In \S\ref{sec:notation} we recall basic facts and 
introduce the sail $S(\alpha, \beta)$ 
bound to lines. The sail is 
a $3$-dimensional semialgebraic 
submanifold of $M(\scA)$ which will be used to 
define the minimal stratification. 
\S\ref{sec:min} contains the main result. 
The proof will be given in 
\S\ref{sec:mainproof}. 
In \S\ref{sec:dualpre} we discuss the presentation 
for $\pi_1(M(\scA))$ associated to the minimal 
stratification. The generators are taken 
as transversal loops to the strata. 
In \S\ref{sec:poshom} we take meridian generators 
for the fundamental group. By computing relations 
in the previous section with respect to the new 
generators, we reach the positive 
homogeneous presentation.

\section{A one-dimensional example}
\label{sec:1dim}

\begin{example}
Let 
$M=\bC\setminus\{0, 1\}$ and 
$\varphi(z):=\frac{(z+1)^2}{\sqrt{z(z-1)}}$. 
We consider $|\varphi|:M\rightarrow\bR$ as a Morse function 
which has three critical points 
$z=-1, \frac{5-\sqrt{17}}{4}, \frac{5+\sqrt{17}}{4}$ with 
index $0, 1, 1$ respectively. Note that all critical points 
are real and 
$0<\frac{5-\sqrt{17}}{4}<1<\frac{5+\sqrt{17}}{4}$. 
The unstable manifolds present a one-dimensional CW complex 
which is homotopic to $M$. 
Since $|\varphi(z)|\rightarrow\infty$ as $|z|\rightarrow\infty$, 
the unstable cells are as in Figure \ref{fig:unst}. 
It is not easy to describe the unstable manifolds explicitly 
even for one-dimensional cases. Nevertheless, the stable manifolds 
can be explicitly described: two open segments $(0,1), (1, \infty)$ 
and the remainder $U=M\setminus((0,1)\cup(1,\infty))$.

\begin{figure}[htbp]
\begin{picture}(100,100)(20,0)
\thicklines

\put(200,50){\circle{5}}
\put(300,50){\circle{5}}

\put(87,35){$-1$}
\put(197,35){$0$}
\put(297,35){$1$}

\put(100,50){\circle*{5}}
\put(220,50){\circle*{5}}
\put(330,50){\circle*{5}}
\put(223,33){$\frac{5-\sqrt{17}}{4}$}
\put(333,33){$\frac{5+\sqrt{17}}{4}$}

\qbezier(220,50)(220,75)(200,75)
\qbezier(200,75)(150,75)(100,50)
\qbezier(220,50)(220,25)(200,25)
\qbezier(200,25)(150,25)(100,50)

\qbezier(330,50)(330,100)(250,100)
\qbezier(250,100)(120,100)(100,50)
\qbezier(330,50)(330,0)(250,0)
\qbezier(250,0)(120,0)(100,50)

\multiput(203,50)(3,0){32}{\circle*{1}}
\multiput(303,50)(3,0){32}{\circle*{1}}

\end{picture}
     \caption{Unstable and stable manifolds (thick and dotted line, 
respectively).}
\label{fig:unst}
\end{figure}
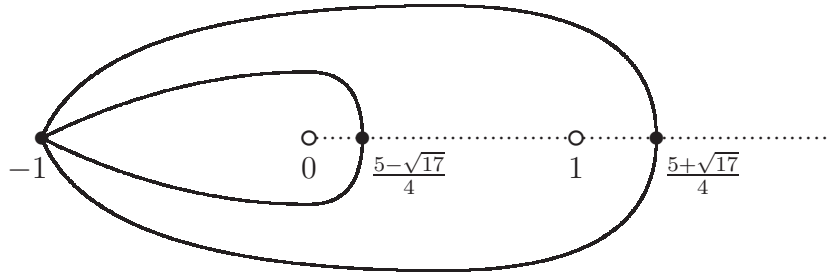

We have a partition 
$U\sqcup (0,1)\sqcup (1,\infty)$ of $M$ by 
contractible pieces, and 
note that the number of codimension zero piece is 
equal to $b_0(M)=1$ and that of codimension one is 
$b_1(M)=2$. 
Also note that codimension one pieces 
$(0,1)$ and $(1, \infty)$ 
are nothing but chambers of the real hyperplane arrangement 
$\{0, 1\}$. These pieces are 
expressed in terms of defining linear forms as follows, 
\begin{equation}
\label{eq:segment}
\begin{split}
(0,1)=\left\{z\in M\left|
\frac{z-1}{z}\in\bR_{<0}
\right.\right\}, 
\\
(1,\infty)=\left\{z\in M\left|
\frac{-1}{z-1}\in\bR_{<0}
\right.\right\}, 
\end{split}
\end{equation}
where $\bR_{<0}$ is the set of negative real numbers. 
\end{example}

The homotopy types of the unstable cells for 
higher dimensional cases are discussed 
in \cite{yos-lef}. 
The unstable cell itself is highly transcendental. 
We will see that the submanifolds defined by 
formulae similar to (\ref{eq:segment}) stratify 
the complement $\bC^2$ minus lines. Also it gives 
a partition into the disjoint union of 
contractible manifolds. 



\section{Basic notation}
\label{sec:notation}

\subsection{Setting}
\label{subsec:setting}

A real arrangement $\scA =\{H_1, \dots, H_n\}$ is a 
finite set of affine lines in the affine plane 
$\bR^2$. Each line is defined by some affine 
linear form 
\begin{equation}
\label{eq:defining}
\alpha_H(x_1, x_2)=ax_1+bx_2+c=0, 
\end{equation}
with $a, b, c\in\bR$ and $(a, b)\neq (0,0)$. 
A connected component of $\bR^2\setminus\bigcup_{H\in\scA}H$ 
is called a chamber. The set of all chambers is denoted by 
$\ch(\scA)$. The affine linear equation (\ref{eq:defining}) 
defines a complex line 
$\{(z_1, z_2)\in\bC^2\mid az_1+bz_2+c=0\}$ in $\bC^2$. 
We denote the set of complexified lines by 
$\scA_\bC=\{H_\bC=H\otimes\bC\mid H\in\scA\}$. 
The object of our interest is the complexified 
complement $M(\scA)=\bC^2\setminus\bigcup_{H\in\scA}H_\bC$. 

\subsection{Generic flags and numbering of lines}

Let $\scF$ be a generic flag in $\bR^2$
$$
\scF:
\emptyset=
\scF^{-1}\subset
\scF^{0}\subset
\scF^{1}\subset
\scF^{2}=\bR^2, 
$$
where $\scF^k$ is a generic $k$-dimensional 
affine subspace. 

\begin{definition}
For $k=0, 1, 2$, define the subset 
$\ch_k^\scF(\scA)\subset\ch(\scA)$ by 
$$
\ch_k^\scF(\scA):=
\{C\in\ch(\scA)\mid C\cap\scF^k\neq\emptyset, 
C\cap\scF^{k-1}=\emptyset\}.
$$ 
\end{definition}
The set of chambers decomposes into a 
disjoint union, 
$\ch(\scA)=
\ch_0^\scF(\scA)\sqcup
\ch_1^\scF(\scA)\sqcup
\ch_2^\scF(\scA)$. The cardinality of 
$\ch_k^\scF(\scA)$ is given as follows, 
which is an application of Zaslawski's 
formula \cite{zas-face}. 

\begin{proposition}
\begin{equation*}
\begin{split}
&\sharp\ch_0^\scF(\scA)=b_0(M(\scA))=1, \\
&\sharp\ch_1^\scF(\scA)=b_1(M(\scA))=n, \\
&\sharp\ch_2^\scF(\scA)=b_2(M(\scA)). 
\end{split}
\end{equation*}
\end{proposition}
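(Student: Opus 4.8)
The plan is to derive all three equalities from a single application of Zaslawski's counting formula together with the Orlik--Solomon description of the Betti numbers, reducing the statement to two elementary genericity arguments and one subtraction.

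First I would record the global count. By the Orlik--Solomon theorem the Poincar\'e polynomial $\Poin(M(\scA),t)=\sum_k b_k(M(\scA))\,t^k$ equals $\sum_{X\in L(\scA)}\mu(X)(-t)^{\mathrm{rk}(X)}$, where $L(\scA)$ is the intersection poset and $\mu$ its M\"obius function. For a line arrangement this gives $b_0=1$, $b_1=n$, and $b_2=\sum_p(m_p-1)$, the sum running over the intersection points $p$ with $m_p$ the number of lines through $p$. Evaluating at $t=1$ and using $(-1)^{\mathrm{rk}(X)}\mu(X)=|\mu(X)|$, Zaslawski's formula \cite{zas-face} identifies $\Poin(M(\scA),1)=\sum_X|\mu(X)|$ with the total number of chambers. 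Hence $\sharp\ch(\scA)=b_0+b_1+b_2$, an identity I will use at the very end.

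Next, the two low-dimensional strata are handled directly from genericity. Since $\scF^0$ is a generic point it lies on no line $H_i$, hence in the interior of a unique chamber; as $\scF^{-1}=\emptyset$ imposes no condition, $\ch_0^\scF(\scA)$ consists of exactly that one chamber, giving $\sharp\ch_0^\scF(\scA)=1=b_0$. For $\ch_1^\scF(\scA)$, the generic line $\scF^1$ meets the $n$ lines transversally in $n$ distinct points, none of which is $\scF^0$, and avoids every multiple point of $\scA$; these $n$ points cut $\scF^1$ into $n+1$ open segments. Because each chamber is convex, it meets $\scF^1$ in a connected set, so the $n+1$ segments lie in $n+1$ pairwise distinct chambers. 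Exactly one of these contains $\scF^0$ and therefore lies in $\ch_0^\scF(\scA)$; the remaining $n$ meet $\scF^1$ but not $\scF^0$, so $\sharp\ch_1^\scF(\scA)=n=b_1$.

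Finally I would obtain the top stratum by subtraction. Every chamber meets $\scF^2=\bR^2$, so the decomposition $\ch(\scA)=\ch_0^\scF(\scA)\sqcup\ch_1^\scF(\scA)\sqcup\ch_2^\scF(\scA)$ is exhaustive, and a chamber belongs to $\ch_0^\scF(\scA)\sqcup\ch_1^\scF(\scA)$ precisely when it meets $\scF^1$. Combining the global count with the two computations above yields $\sharp\ch_2^\scF(\scA)=\sharp\ch(\scA)-\sharp\ch_0^\scF(\scA)-\sharp\ch_1^\scF(\scA)=(b_0+b_1+b_2)-1-n=b_2$. The only substantive inputs beyond bookkeeping are the two classical combinatorial theorems (Orlik--Solomon and Zaslawski); the main point requiring care is the convexity argument ensuring that the $n+1$ segments on $\scF^1$ land in distinct chambers, together with verifying that the genericity of $\scF$ really does separate the $n$ intersection points of $\scF^1$ with the lines from one another and from $\scF^0$.
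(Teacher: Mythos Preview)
Your argument is correct and is essentially what the paper has in mind: the paper does not give a detailed proof at all but simply states that the proposition ``is an application of Zaslawski's formula \cite{zas-face},'' and your write-up supplies exactly those details---Zaslawski plus Orlik--Solomon for the global chamber count, direct genericity arguments for $k=0,1$, and subtraction for $k=2$. The only remark worth adding is that the paper is implicitly invoking the general phenomenon (valid in any dimension and proved in the author's earlier work \cite{yos-lef}) that $\sharp\ch_k^\scF(\scA)=b_k(M(\scA))$ for every $k$, whose standard proof applies Zaslawski to each generic section $\scA\cap\scF^k$ and uses that a generic $k$-section preserves the intersection lattice up to rank $k$; your two-dimensional argument is the specialization of this, with the inductive step for $k=1$ replaced by the elementary count of $n+1$ intervals on a line.
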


\subsection{Assumptions on generic flag and numbering}
\label{subsec:numbering}

Throughout this paper, we assume that 
the generic flag $\scF$ satisfies the following 
conditions: 
\begin{itemize}
\item $\scF^1$ does not separate intersections of $\scA$, 
\item $\scF^0$ does not separate $n$-points 
$\scA\cap\scF^1$. 
\end{itemize}
Then we can choose coordinates $x_1, x_2$ so that 
$\scF^0$ is the origin, 
$\scF^1$ is given by $x_2=0$, all intersections of $\scA$ 
are contained in the upper-half plane $\{(x_1, x_2)\in\bR^2\mid
x_2>0\}$ and $\scA\cap\scF^1$ is contained in the 
half line $\{(x_1, 0)\mid x_1>0\}$. 

We set $H_i\cap\scF^1$ has coordinates $(a_i, 0)$. 
By changing the numbering of lines and signs of 
the defining equation $\alpha_i$ of $H_i\in\scA$ 
we may assume 
\begin{itemize}
\item $0<a_n<a_{n-1}<\dots<a_1$, and 
\item the origin $\scF^0$ is contained in the negative 
half plane $H_i^-=\{\alpha_i<0\}$. 
\end{itemize}
\begin{remark}
Sometimes it is convenient to consider 
$0$-th line $H_0$ to be the line at infinity $H_0$ with 
defining equation $\alpha_0=-1$ and $a_0=+\infty$. 
\end{remark}
We also put 
$\ch_0^\scF(\scA)=\{C_0\}$ and 
$\ch_1^\scF(\scA)=\{C_1, \dots, C_n\}$ so that 
$C_k\cap\scF^1$ is equal to the interval $(a_k, a_{k-1})$. 
(We use the convention $a_0=+\infty$.) 
It is easily seen that the 
chambers $C_0$ and $C_k$ ($k=1, \dots, n$) 
have the following expression. 
\begin{equation}
\begin{split}
&C_0=\bigcap_{i=1}^n\{\alpha_i<0\},\\
&C_k=
\bigcap_{i=0}^{k-1}\{\alpha_i<0\}\cap
\bigcap_{i=k}^n\{\alpha_i>0\},\ (k=1, \dots, n).
\end{split}
\end{equation}
(We consider $\alpha_0<0$ whole $\bR^2$.) 
The notations introduced in this section are illustrated 
in Figure \ref{fig:numbering}.


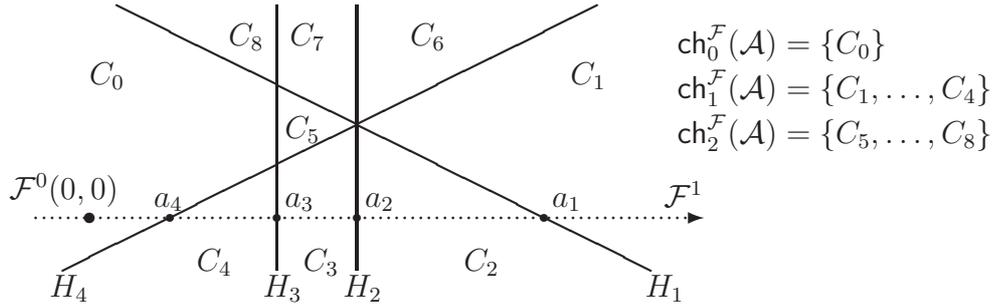
\begin{figure}[htbp]
\begin{picture}(100,100)(20,0)
\thicklines

\put(70,20){\circle*{4}}
\put(40,27){$\scF^0(0,0)$}

\multiput(50,20)(3,0){83}{\circle*{1}}
\put(300,20){\vector(1,0){0}}
\put(285,24){$\scF^1$}

\put(280,0){\line(-2,1){200}}
\put(240,20){\circle*{3}}
\put(243,24){$a_1$}
\put(278,-10){$H_1$}

\put(170,0){\line(0,1){100}}
\put(170,20){\circle*{3}}
\put(173,24){$a_2$}
\put(165,-10){$H_2$}

\put(140,0){\line(0,1){100}}
\put(140,20){\circle*{3}}
\put(143,24){$a_3$}
\put(135,-10){$H_3$}

\put(60,0){\line(2,1){200}}
\put(100,20){\circle*{3}}
\put(94,24){$a_4$}
\put(55,-10){$H_4$}

\put(70,70){$C_0$}
\put(250,70){$C_1$}
\put(210,0){$C_2$}
\put(150,0){$C_3$}
\put(110,0){$C_4$}
\put(143,50){$C_5$}
\put(190,85){$C_6$}
\put(145,85){$C_7$}
\put(122,85){$C_8$}

\put(290,82){$\ch_0^\scF(\scA)=\{C_0\}$}
\put(290,65){$\ch_1^\scF(\scA)=\{C_1,\dots,C_4\}$}
\put(290,48){$\ch_2^\scF(\scA)=\{C_5,\dots,C_8\}$}

\end{picture}
     \caption{Numbering of lines and chambers.}\label{fig:numbering}
\end{figure}


\subsection{Sails bound to lines}
\label{subsec:sail}

Let $\alpha, \beta\in\bC[z_1, z_2]$ be 
polynomials of $\deg\leq 1$. 
We assume that $\alpha\neq 0, \beta\neq 0$ and they are 
linearly independent over $\bC$. 
(Note that we allow the situation that one of $\alpha$ or $\beta$ is 
equal to a non-zero constant.) 

\begin{definition}
\label{def:sail}
For $\alpha$ and $\beta$ as above, we define the 
{\em sail bound to $\alpha$ and $\beta$} by 
$$
S(\alpha. \beta)=
\left\{
z=(z_1, z_2)\in\bC^2
\left|
\alpha(z)\beta(z)\neq 0,\ 
\frac{\alpha(z)}{\beta(z)}\in\bR_{<0}
\right.
\right\}. 
$$
\end{definition}
The sail 
$S(\alpha, \beta)$ is a closed subset of 
$\bC^2\setminus\{\alpha\beta=0\}$. 
Furthermore we have: 

\begin{lemma}
$S(\alpha, \beta)$ is an orientable $3$-dimensional 
manifold. More precisely, 
\begin{itemize}
\item[(1)] 
if $\alpha$ and $\beta$ determine 
intersecting lines, 
then $S(\alpha, \beta)$ is diffeomorphic to 
$\bC^*\times\bR_{<0}$. 
\item[(2)] 
else, (i.e., either 
$\alpha$ and $\beta$ determine parallel 
lines or one of 
$\alpha$ and $\beta$ is a nonzero constant), 
then $S(\alpha, \beta)$ is diffeomorphic to 
$\bC\times\bR_{<0}$. 
\end{itemize}
\end{lemma}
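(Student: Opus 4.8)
The plan is to reduce the statement, in each case, to an explicit model set inside $\bC^2$ (or $\bC$) by composing with the affine map determined by $\alpha$ and $\beta$, and then to exhibit an explicit diffeomorphism onto the asserted product. The dichotomy in the lemma is exactly the dichotomy between the linear parts of $\alpha$ and $\beta$ being linearly independent (intersecting lines) or proportional (parallel lines, or one form constant). Since the defining condition $\alpha/\beta\in\bR_{<0}$ involves $z$ only through the values $\alpha(z),\beta(z)$, I would treat $F:=(\alpha,\beta)\colon\bC^2\to\bC^2$ as the basic object and study the preimage of the model locus $T:=\{(w_1,w_2)\mid w_1/w_2\in\bR_{<0}\}$.

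For case (1) the linear parts of $\alpha$ and $\beta$ are linearly independent over $\bC$, so $F$ is an affine isomorphism of $\bC^2$ and carries $S(\alpha,\beta)$ diffeomorphically onto $T$. It then suffices to describe $T$. Writing the condition as $w_1=-t\,w_2$ with $t>0$ and $w_2\in\bC^*$, the map $(t,w_2)\mapsto(-t\,w_2,w_2)$ is a diffeomorphism $\bR_{>0}\times\bC^*\simto T$ with smooth inverse $(w_1,w_2)\mapsto(-w_1/w_2,\,w_2)$. Composing with $t\mapsto -t$ identifies $S(\alpha,\beta)$ with $\bC^*\times\bR_{<0}$, as claimed.

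For case (2) the linear parts are proportional, so I can write $\beta=\lambda\alpha+\mu$ with $\mu\neq 0$ (here $\lambda=0$ covers the case that $\beta$ is a nonzero constant). The defining condition then depends on $z$ only through $w:=\alpha(z)$, so $S(\alpha,\beta)=\alpha^{-1}(R)$ where $R:=\{w\in\bC\mid w/(\lambda w+\mu)\in\bR_{<0}\}$. Choosing an affine coordinate $v$ complementary to $\alpha$ identifies $\bC^2$ with $\bC_w\times\bC_v$ so that $\alpha$ is the first projection; hence $\alpha^{-1}(R)=R\times\bC_v$, and everything comes down to showing $R$ is diffeomorphic to $\bR_{<0}$. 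This is the step I expect to be the main obstacle, and I would handle it through the Möbius transformation $g(w)=w/(\lambda w+\mu)$, which is a biholomorphism of $\hat\bC$ since its determinant $\mu$ is nonzero. Then $R=g^{-1}(\bR_{<0})$ is the $g$-preimage of the open arc of $\hat\bR$ joining $0$ and $\infty$; since $g^{-1}(0)=0$ and $g^{-1}(\infty)=-\mu/\lambda$ (or $\infty$ when $\lambda=0$) are distinct, $R$ is a single proper open arc of a circle or line in $\hat\bC$, hence diffeomorphic to $\bR_{<0}$, with $g$ restricting to the diffeomorphism. This gives $S(\alpha,\beta)\cong\bR_{<0}\times\bC$.

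Finally, orientability and the dimension count are immediate from the two product descriptions: $\bC^*\times\bR_{<0}$ and $\bC\times\bR_{<0}$ are orientable real $3$-manifolds, and the diffeomorphisms transport orientations. The only genuinely delicate point is the case (2) analysis of $R$: the Möbius viewpoint is what guarantees that $R$ is a single embedded arc (in particular connected, and missing exactly its two endpoints) rather than a more complicated real-algebraic curve, which is precisely what is needed for the trivial $\bC$-bundle structure to yield the clean model $\bC\times\bR_{<0}$.
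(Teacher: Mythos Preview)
Your proof is correct and follows essentially the same route as the paper: in case~(1) you use the affine isomorphism $(\alpha,\beta)$ to reduce to the model $\{(s,t)\mid s/t\in\bR_{<0}\}\cong\bC^*\times\bR_{<0}$, and in case~(2) you trivialize along a coordinate complementary to $\alpha$ and then identify the fiber $R=\{w\mid w/(\lambda w+\mu)\in\bR_{<0}\}$ as an open arc. Your M\"obius-transformation argument for why $R$ is a single proper arc is a cleaner justification than the paper's ``it is easily checked''; the only cosmetic omission is that when $\alpha$ (rather than $\beta$) is the nonzero constant you should first swap the two forms, which is harmless since $S(\alpha,\beta)=S(\beta,\alpha)$ as sets.
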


\proof

Case (1): Suppose that $\deg\alpha=\deg\beta=1$ and 
two lines intersects. Then the map 
$$
\begin{array}{cccc}
(\alpha, \beta):&\bC^2&\longrightarrow&\bC^2\\
&&&\\
&z&\longmapsto&(\alpha(z), \beta(z))
\end{array}
$$
is isomorphic. The image of the sail $S(\alpha, \beta)$ by 
the map $(\alpha, \beta)$ is 
$$
\left\{
(s, t)\in\bC^2
\left|
s\cdot t\neq 0,\ 
\frac{s}{t}\in\bR_{<0}
\right.
\right\}, 
$$
where $s, t$ are coordinates of the target $\bC^2$. 
The image is isomorphic to $\bC^*\times\bR_{<0}$ 
by the the isomorphism $(s, t)\mapsto (t, s/t)$ 
of $(\bC^*)^2$. 

\noindent
Case (2):  
Suppose that $\deg\alpha=\deg\beta=1$ and 
two lines are parallel. In this case we may 
assume that $\beta=p\alpha + q$ with $p, q\in\bC^*$. 
Choose another linear equation $\gamma$ such that 
lines $\alpha=0$ and $\gamma=0$ are intersecting. 
Then 
$$
\begin{array}{cccc}
(\alpha, \gamma):&\bC^2&\longrightarrow&\bC^2\\
&&&\\
&z&\longmapsto&(\alpha(z), \gamma(z))
\end{array}
$$
is isomorphic. The image of $S(\alpha, \beta)$ is 
expressed as 
$$
\left\{
(s, t)\in\bC^2
\left|
s\cdot t\neq 0,\ 
\frac{s}{ps+q}\in\bR_{<0}
\right.
\right\}. 
$$
It is easily checked that the set 
$$
\left\{
s\in\bC
\left|
\frac{s}{p(s+\frac{q}{p})}\in\bR_{<0}
\right.
\right\}
$$
is an open arc connecting $0$ and $-\frac{q}{p}\in\bC$. 
Thus $S(\alpha, \beta)$ is isomorphic to the product of 
the open arc and $\bC$. 

\noindent
Case (3): The proof for the case 
$\deg\alpha=1$ and $\deg\beta=0$ is 
similar to the case (2). 
\qed

\subsection{Orientations}
\label{subsec:ori}

For the purpose of obtaining a presentation for 
the fundamental group of $M(\scA)$, 
intersection numbers of loops and sails 
play crucial roles. It is necessary to specify 
the orientation of the sail $S(\alpha, \beta)$.

We first recall that the orientation of 
$\bC^2$ is given by the identification 
$$
\begin{array}{rcl}
\bC^2&\stackrel{\sim}{\longrightarrow}&\bR^4\\
&&\\
(z_1, z_2)&\longmapsto&(x_1, y_1, x_2, y_2), 
\end{array}
$$
where $z_i=x_i+\ii y_i$. 
Consider the map $\varphi=\frac{\alpha}{\beta}: 
\bC^2\setminus\{\alpha\beta=0\}\rightarrow\bC$. 
Since $S(\alpha, \beta)$ is connected, it is 
enough to specify an orientation of 
$T_pS(\alpha, \beta)$ for a point $p\in S(\alpha, \beta)$. 
The following two ordered direct sums 
determine an orientation of $S(\alpha, \beta)$: 
\begin{equation*}
\begin{split}
T_pS(\alpha, \beta)\oplus N_p(S(\alpha, \beta), \bC^2)&=T_p\bC^2\\
T_{\varphi(p)}R_{<0}\oplus \varphi_* N_p(S(\alpha, \beta), \bC^2)&=
T_{\varphi(p)}\bC, 
\end{split}
\end{equation*}
where $N_p(S, \bC^2)$ is a normal bundle. Note that we consider 
the orientation of $\bR_{<0}$ induced from the inclusion 
$\bR_{<0}\subset\bR$. 

\begin{remark}
$S(\alpha, \beta)$ and $S(\beta, \alpha)$ are 
the same as manifolds, but orientations are 
different. 
\end{remark}

The above definition is equivalent to saying as follows. 
Let $c:(-\varepsilon, \varepsilon)\longrightarrow
\bC^2\setminus\{\alpha\beta=0\}$ be a differentiable 
map transversal to $S(\alpha, \beta)$. Assume that 
$c^{-1}(S(\alpha, \beta))=\{0\}$. 
Then $c$ intersects $S(\alpha, \beta)$ positively 
(denoted by $I_{c(0)}(S(\alpha, \beta), c)=+1$) if 
and only if 
$$
\varphi_*(\dot{c}(0))\in T_{\varphi(c(0))}\bC\simeq \bC
$$
has positive imaginary part (Figure \ref{fig:intersection}). 

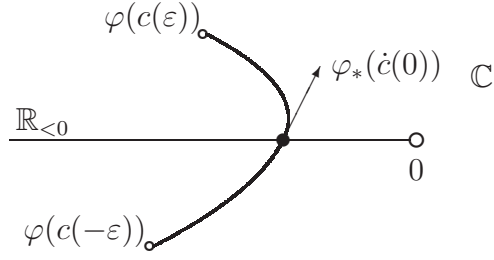
\begin{figure}[htbp]
\begin{picture}(100,100)(20,0)
\thicklines

\put(250,50){\circle{5}}
\put(247,35){$0$}

\thinlines
\put(247.5,50){\line(-1,0){150}}
\put(100,54){$\bR_{<0}$}

\thicklines

\put(270,70){$\bC$}

\qbezier(151.5,10)(190,30)(200,50)
\qbezier(200,50)(210,70)(171.5,90)

\put(150,10){\circle{3}}
\put(170,90){\circle{3}}
\put(103,13){$\varphi(c(-\varepsilon))$}
\put(133,93){$\varphi(c(\varepsilon))$}

\put(200,50){\circle*{5}}
\thinlines
\put(200,50){\vector(1,2){14}}
\put(218,75){$\varphi_*(\dot{c}(0))$}

\end{picture}
\caption{$\varphi\circ c:(-\varepsilon, \varepsilon)
\longrightarrow\bC$.}
\label{fig:intersection}
\end{figure}
Let us look at an example showing how the 
intersection numbers are computed.

\begin{example}
\label{ex:int}
Let $\varphi(z_2, z_1)=\frac{z_2}{z_1}$ and 
$$
S:=S(z_2, z_1)=
\left\{
(z_1, z_2)\in(\bC^*)^2
\mid
\varphi({z_2}, {z_1})\in\bR_{<0}
\right\}. 
$$
Fix positive real numbers $r, \varepsilon>0$ and 
an argument $0\leq\theta_0<2\pi$. 
Consider the continuous map 
$$
\begin{array}{rccl}
\gamma:&\bR/2\pi\bZ&\longrightarrow&(\bC^*)^2\\
&&&\\
&t&\longmapsto&
r(\cos\theta_0, \sin\theta_0)+
\ii\eps
(\cos t, \sin t). 
\end{array}
$$
Then $\gamma(t)\in S$ if and only if 
$\varphi(\gamma(t))=
\frac{r\sin\theta_0+\ii\eps\sin t}{r\cos\theta_0+\ii\eps\cos t}$ 
is a negative real number. 
Since 
$$
\frac{r\sin\theta_0+\ii\eps\sin t}{r\cos\theta_0+\ii\eps\cos t}
=
\frac{r^2\sin\theta_0\cos\theta_0+\eps^2\sin t\cos t+
\ii\cdot r\cdot\eps\sin(t-\theta_0)}
{r^2\cos^2\theta_0+\eps^2\cos^2 t}, 
$$
it is contained in $\bR_{<0}$ if and only if 
$t=\theta_0, \theta_0+\pi$ and 
$\sin\theta_0\cdot\cos\theta_0<0$ (equivalently either 
$\frac{\pi}{2}<\theta_0<\pi$ or 
$\frac{3\pi}{2}<\theta_0<2\pi$). 
In such cases it is easily seen that 
$\Im\varphi_*(\dot{\gamma}(\theta_0))>0$ and 
$\Im\varphi_*(\dot{\gamma}(\theta_0+\pi))<0$. 
Hence we have 
$$
I_{\gamma(\theta_0)}(S, \gamma)=+1, \mbox{ and }
I_{\gamma(\theta_0+\pi)}(S, \gamma)=-1. 
$$
\end{example}

\section{Minimal Stratification}
\label{sec:min}

\subsection{Main result}
\label{subsec:main}

In this section we shall give an explicit stratification 
of the complement $M(\scA)$ by using chambers and sails. 
We keep the notations as in \S\ref{subsec:numbering}. 
First recall that the sail defined by $\alpha_i$ and 
$\alpha_{i-1}$ is 
$$
S(\alpha_{i-1}, \alpha_{i})=
\left\{
z\in\bC^2
\left|
\alpha_{i-1}(z)\cdot\alpha_i(z)\neq 0,\ 
\frac{\alpha_{i-1}(z)}{\alpha_i(z)}\in\bR_{<0}
\right.
\right\}. 
$$
(we use the convention $\alpha_0=-1$). 
Then 
$$
S_i:=S(\alpha_{i-1}, \alpha_{i})\cap M(\scA)
$$
is an oriented $3$-dimensional closed submanifold of 
$M(\scA)$ for $i=1, \dots, n$. These $S_i$'s stratify 
the complement $M(\scA)$. 
\begin{proposition}
Let $C\in\ch(\scA)$ and $i=1, \dots, n$. 
The following are equivalent. 
\begin{itemize}
\item[(a)] 
$C\subset S_i$. 
\item[(b)] 
$C\cap S_i\neq\emptyset$. 
\item[(c)] 
$\alpha_i(C)\cdot\alpha_{i-1}(C)<0$. (We use the 
convention $\alpha_0=-1$.) 
\end{itemize}
\end{proposition}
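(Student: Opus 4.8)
The plan is to prove the cyclic chain of implications $(a)\Rightarrow(b)\Rightarrow(c)\Rightarrow(a)$, exploiting the fact that a chamber $C$ lies in the real plane $\bR^2\subset\bC^2$, where the sail condition simplifies to a mere comparison of signs.

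First I would record two preliminary observations. Since $C$ is a connected component of $\bR^2\setminus\bigcup_H H$ and $H_\bC\cap\bR^2=H$ for each line, we have $C\cap H_\bC=C\cap H=\emptyset$, so $C\subset M(\scA)$; in particular $C$ meets $S_i=S(\alpha_{i-1},\alpha_i)\cap M(\scA)$ if and only if it meets $S(\alpha_{i-1},\alpha_i)$. Moreover each $\alpha_i$ is real-valued and nowhere zero on $C$, hence of constant sign there, so the symbol $\alpha_i(C)$ (read as $\operatorname{sgn}\alpha_i|_C$) is well defined, and the convention $\alpha_0=-1$ gives $\alpha_0(C)=-1$.

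The implication $(a)\Rightarrow(b)$ is immediate, as $C$ is a nonempty open set. For $(b)\Rightarrow(c)$, I would pick $z\in C\cap S_i$; then $z\in\bR^2$, so $\alpha_{i-1}(z)$ and $\alpha_i(z)$ are nonzero real numbers whose quotient lies in $\bR_{<0}$, which forces $\alpha_{i-1}(z)\alpha_i(z)<0$, and by constancy of sign this yields $\alpha_{i-1}(C)\alpha_i(C)<0$. For $(c)\Rightarrow(a)$, assuming the two forms have opposite signs on $C$, for every $z\in C$ the real ratio $\alpha_{i-1}(z)/\alpha_i(z)$ is negative, so $z\in S(\alpha_{i-1},\alpha_i)$; combined with $C\subset M(\scA)$ this gives $z\in S_i$ for all $z\in C$, i.e. $C\subset S_i$.

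Since each step reduces to an elementary sign computation on real points, there is no serious obstacle here; the only points requiring care are the bookkeeping that $C\subset M(\scA)$ and that the restriction of each $\alpha_i$ to the connected set $C$ has constant sign, together with the uniform treatment of the boundary case $i=1$ via the convention $\alpha_0=-1$.
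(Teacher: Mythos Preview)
Your argument is correct. The paper in fact states this proposition without proof, treating it as an immediate consequence of the definitions; your cyclic chain $(a)\Rightarrow(b)\Rightarrow(c)\Rightarrow(a)$ via the observation that on real points the sail condition reduces to a sign comparison is precisely the routine verification the paper leaves implicit.
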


Now we state the main result. 

\begin{theorem}
\label{thm:main}
The closed submanifolds $S_1, \dots, S_n\subset M(\scA)$ 
satisfy the following. 
\begin{itemize}
\item[(i)] 
$S_i$ and $S_j$ ($i\neq j$) intersect transversely, and 
$S_i\cap S_j=\bigsqcup C$, where $C$ runs all chambers 
satisfying 
$\alpha_i(C)\alpha_{i-1}(C)<0$ and 
$\alpha_j(C)\alpha_{j-1}(C)<0$. 
\item[(ii)] 
$
S_i^\circ:=S(\alpha_i, \alpha_{i-1})\setminus
\bigcup_{C\in\ch_2^\scF(\scA)}C
$
is a contractible $3$-manifold. 
\item[(iii)] 
$U:=M(\scA)\setminus\bigcup_{i=1}^n S_i$ 
is a contractible $4$-manifold. 
\end{itemize}
\end{theorem}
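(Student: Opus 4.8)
The plan is to first convert membership in the sails into real equations and inequalities, and then to treat the three assertions separately: (i) by a linear-algebra computation in the imaginary directions, and (ii)--(iii) by explicit deformation retractions onto the distinguished real chambers. Write $z=x+\ii y$ with $x=(x_1,x_2),\,y=(y_1,y_2)\in\bR^2$, and for each $k$ split the affine form as $\alpha_k(z)=\alpha_k(x)+\ii\,\ell_k(y)$, where $\ell_k$ denotes the linear part of $\alpha_k$. Since $\alpha_{i-1}(z)/\alpha_i(z)\in\bR_{<0}$ is equivalent to $\alpha_{i-1}(z)\overline{\alpha_i(z)}\in\bR_{<0}$, a point lies in $S_i$ exactly when
$$
E_i(x,y):=\ell_{i-1}(y)\,\alpha_i(x)-\alpha_{i-1}(x)\,\ell_i(y)=0
\quad\text{and}\quad
R_i(x,y):=\alpha_{i-1}(x)\alpha_i(x)+\ell_{i-1}(y)\ell_i(y)<0 .
$$
Thus each $S_i$ is cut out inside $M(\scA)$ by the single real equation $E_i=0$ together with the open condition $R_i<0$, and the real chambers of the Proposition are recovered from the slice $y=0$, where $R_i=\alpha_{i-1}(x)\alpha_i(x)$ (using $\alpha_0=-1$).

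For (i), I would fix $x$ and read $E_i=0$ as a condition on $y$ alone, namely the vanishing of the linear functional $v_i(x):=\alpha_i(x)\,\ell_{i-1}-\alpha_{i-1}(x)\,\ell_i$ evaluated at $y$. Because $H_{i-1}$ and $H_i$ are distinct, $v_i(x)$ vanishes only at the single point $H_{i-1}\cap H_i$ (and nowhere when the two lines are parallel, since then $\alpha_{i-1}-c\alpha_i$ is a nonzero constant), so away from that point $\{E_i=0\}$ is a line through the origin of the $y$-plane. Consequently, on the locus where $v_i(x)$ and $v_j(x)$ are linearly independent, the two lines $\{E_i=0\}$ and $\{E_j=0\}$ meet only in $y=0$; this simultaneously forces $z$ to be real and shows that $E_i,E_j$ have independent differentials there, i.e. that $S_i$ and $S_j$ meet transversely. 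Restricting to $y=0$ under the sign conditions $R_i,R_j<0$ then selects exactly the chambers $C$ with $\alpha_i(C)\alpha_{i-1}(C)<0$ and $\alpha_j(C)\alpha_{j-1}(C)<0$, giving $S_i\cap S_j=\bigsqcup C$.

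For (ii) and (iii) I would argue by deformation retraction, using the product descriptions $S(\alpha_{i-1},\alpha_i)\cong\bC^*\times\bR_{<0}$ (or $\bC\times\bR_{<0}$) from the Lemma of \S\ref{subsec:sail}. The chambers removed to form $S_i^\circ$ are the real $2$-cells of $\ch_2^\scF(\scA)$ lying in $S_i$, while $C_i\in\ch_1^\scF(\scA)$ always survives; I would contract the $\bR_{<0}$-factor and then retract the remaining $\bC^*$ (resp. $\bC$) factor, cut along the removed chamber-arcs and the traces of the other lines $\{\alpha_k=0\}$, onto the single real chamber $C_i$, yielding contractibility of $S_i^\circ$. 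For $U$, the base chamber $C_0=\bigcap_i\{\alpha_i<0\}$ lies in $U$ because no consecutive ratio is a negative real there; I would build a retraction of $U$ onto $C_0$ by first moving the phases $\arg\alpha_k(z)$ into the configuration realized over $C_0$ (possible precisely because on $U$ no consecutive pair satisfies $\arg\alpha_{i-1}=\arg\alpha_i+\pi$, so phases can be adjusted without crossing a sail) and then contracting within $C_0$. It is natural to organize both retractions as an induction on $n$, adding the line $H_n$ and tracking how $S_n$ and the new chambers subdivide $U$ and the previous strata.

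The main obstacle is the global version of the independence used in (i): although $v_i(x)$ and $v_j(x)$ are independent generically, the polynomial $\det\bigl(v_i(x),v_j(x)\bigr)$ vanishes on a curve, and over that curve the system $E_i=E_j=0$ could a priori admit solutions with $y\neq0$. Ruling these out---equivalently, showing that every such spurious complex solution either violates one of the sign conditions $R_i<0,\,R_j<0$ or meets a removed line $\{\alpha_k=0\}$---is exactly where the genericity of the flag $\scF$ and the chosen ordering $0<a_n<\dots<a_1$ must enter, and this is the step I expect to require the most care. A secondary difficulty is verifying that the retractions in (ii)--(iii) can be chosen to avoid all removed loci simultaneously, so that they restrict to honest deformation retractions of the open strata.
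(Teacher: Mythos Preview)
Your algebraic setup for (i) is correct and is essentially the paper's ``tangent bundle description'' written in coordinates: over each real $x$ with $\alpha_{i-1}(x)\alpha_i(x)<0$, the equation $E_i=0$ cuts out the line in the $y$-plane pointing toward $H_{i-1}\cap H_i$ (what the paper calls the pivotal argument $\pv_i(x)$). The obstacle you flag is real, and you have not resolved it. The paper closes it with one geometric observation you are missing: under the standing hypotheses on $\scF$ (all intersections above $\scF^1$, with $0<a_n<\dots<a_1$), the slopes of the lines satisfy $0<\theta_n\le\cdots\le\theta_1<\pi$, and whenever $\alpha_{i-1}(x)\alpha_i(x)<0$ the pivotal direction obeys $\theta_i\le|\pv_i(x)|\le\theta_{i-1}$ (strictly when $H_{i-1}$ and $H_i$ meet). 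For $i<j$ the intervals $[\theta_i,\theta_{i-1}]$ and $[\theta_j,\theta_{j-1}]$ can therefore overlap only when $H_{i-1},H_i,H_{j-1},H_j$ are all parallel, and then the two real strips $\{\alpha_{i-1}\alpha_i<0\}$ and $\{\alpha_{j-1}\alpha_j<0\}$ are disjoint. Hence $\ker v_i(x)\ne\ker v_j(x)$ whenever both sign conditions $R_i<0$, $R_j<0$ hold, which forces $y=0$ and gives transversality at once. This angular ordering is precisely the missing ingredient; without it your proposed case analysis on the vanishing locus of $\det(v_i,v_j)$ has no traction.

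For (ii) and (iii) the paper follows a rather different and much more explicit route than the one you outline. The basic device is the \emph{linear uniform motion} $z=x+\ii y\mapsto (x+ty)+\ii y$, which one checks directly preserves both $M(\scA)$ and every sail $S(\alpha,\beta)$. For (ii), sliding along this flow until the real part reaches $\scF^1$ gives a deformation retraction of $S_i^\circ$ onto the open interval $S_i\cap\scF^1$; your proposal to contract the $\bR_{<0}$-factor first runs into the difficulty that the traces of the other lines $\{\alpha_k=0\}$ do not respect that product decomposition. For (iii), the paper does \emph{not} induct on the number of lines; it filters $U$ by real half-planes, setting $U_k=\{z\in U:\alpha_1(x)\le0,\dots,\alpha_k(x)\le0\}$, and retracts $U_k$ onto $U_{k+1}$ step by step via the uniform motion (augmented by a quadratic correction, the ``LQ-curves'', when $H_k$ and $H_{k+1}$ meet), terminating at the star-shaped set $U_n$. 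Your ``move the phases $\arg\alpha_k(z)$ into the configuration over $C_0$'' step is where I would push back hardest: the $n$ phases are not independent coordinates on the $4$-manifold $M(\scA)$, so there is no evident way to realize such a phase homotopy by an actual homotopy of points of $U$, and this is not merely a matter of care but a defect of the idea itself.
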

The proof will be given in \S\ref{sec:mainproof}. 

\begin{remark}
Theorem \ref{thm:main} gives rise to a partition of 
$M(\scA)$ into disjoint union of contractible manifolds 
$M(\scA)=U\sqcup\bigsqcup_{i=1}^n S_i^\circ
\sqcup\bigsqcup_{C\in\ch_2^\scF(\scA)}C$. 
Such partitions are obtained in \cite{ito-yos} for any dimension. 
However, the partition $M(\scA)=\sqcup S_\lambda$ 
in \cite{ito-yos} is not induced from a stratification. In other 
words, it does not satisfy the following property: 
$\overline{S_\lambda}\setminus S_\lambda$ is a union of 
other pieces of smaller dimensions. 
We do not know explicit minimal stratification for 
dimension $\geq 3$. 

\end{remark}

\section{Dual presentation for the fundamental group}
\label{sec:dualpre}

Using Theorem \ref{thm:main}, we give a presentation for 
the fundamental group $\pi_1(M(\scA))$. The idea is 
that we take the base point in $U$ and 
transversal loop to each $S_i$ as a generator, then 
relations are generated by loops around chambers 
$C\in\ch_2^\scF(\scA)$. 

\subsection{Transversal generators}
\label{subsec:transvg}

Fix a base point $*\in U$ and a point $p_i\in S_i^\circ$. 
There exists a continuous curve 
$\eta_i:[0,1]\rightarrow M(\scA)$ such that 
\begin{itemize}
\item $\eta_i(0)=\eta_i(1)=*$, 
\item $\eta_i(1/2)=p_i$ and $\eta_i^{-1}(S_i)=\{1/2\}$, 
\item $\eta_i$ intersects $S_i^\circ$ transversely and positively, 
that is, $I_{p_i}(S_i^\circ, \eta_i)=1$, and it does not 
intersect $S_j$ for $j\neq i$. 
\end{itemize}
Since $U$ and $S_i^\circ$ are contractible, 
the homotopy type of 
$\eta_i$ is independent of the choice of $\eta_i$. 

Let $\eta:[0,1]\rightarrow M(\scA)$ be a continuous 
map with $\eta(0), \eta(1)\in U$ (not necessarily 
$\eta(0)=\eta(1)=*$). Since $U$ is contractible, 
there exist paths $c_1$ from the base point $*$ to $\eta(0)$ and 
$c_2$ from $\eta(1)$ to $*$. Then $c_1\eta c_2$ is a loop 
which homotopy class $[c_1\eta c_2]\in\pi_1(M(\scA), *)$ 
is uniquely determined by $\eta$. 
We denote the class by $[\eta]\in\pi_1(M(\scA), *)$ 
for simplicity. 

\begin{lemma}
\label{lem:generate}
With the notation above, 
$[\eta_1], \dots, [\eta_n]$ generate 
$\pi_1(M(\scA), *)$. 
\end{lemma}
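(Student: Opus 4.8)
The plan is to put an arbitrary loop into general position with respect to the stratification $M(\scA)=U\sqcup\bigsqcup_i S_i^\circ\sqcup\bigsqcup_C C$ and then read off a word in the $[\eta_i]$ by recording its crossings with the sails. First I would record two elementary properties of the bracket $[\,\cdot\,]$ introduced above, both immediate from the contractibility of $U$: if a path $\eta$ with endpoints in $U$ decomposes as a concatenation $\eta=\eta'\eta''$ whose midpoint also lies in $U$, then $[\eta]=[\eta'][\eta'']$ (the connecting arcs to $*$ cancel); and if $\eta$ lies entirely in $U$, then $[\eta]=1$ (closing it up in $U$ produces a loop in the simply connected $U$). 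Consequently the bracket is a multiplicative device for assembling the contributions of the individual sail-crossings of a path.

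Next, given a loop $\gamma$ based at $*\in U$, I would homotope it rel $*$ to a smooth loop in general position. Since the chambers $C\in\ch_2^\scF(\scA)$ have real codimension $2$ in the $4$-manifold $M(\scA)$, a generic loop misses them entirely; since the sails $S_i^\circ$ have real codimension $1$, a generic loop meets $\bigcup_i S_i^\circ$ transversally in finitely many points, at parameter values $0<t_1<\dots<t_m<1$, crossing $S_{i_k}^\circ$ with sign $\eps_k\in\{\pm1\}$ at $t_k$. Cutting $\gamma$ at these points writes it as $\gamma=a_0\tau_1 a_1\cdots\tau_m a_m$, where each $a_j$ is an arc in $U$ and each $\tau_k$ is a short arc, with endpoints in $U$, crossing $S_{i_k}^\circ$ exactly once and meeting no other stratum. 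By the two properties above, $[\gamma]=\prod_{k=1}^m[\tau_k]$, so it remains to identify $[\tau_k]$.

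The crux is the local claim that a path $\tau$ with endpoints in $U$ crossing $S_i^\circ$ once with sign $\eps$ and meeting no other stratum satisfies $[\tau]=[\eta_i]^{\eps}$. Here I would use that $\tau$ lives in the open set $V_i:=U\cup S_i^\circ=M(\scA)\setminus\big(\bigcup_{j\neq i}S_j\cup\bigcup_C C\big)$, which is legitimate because Theorem \ref{thm:main}(i) forces $S_i^\circ$ to be disjoint from every other sail; this is exactly what lets the short crossing arc avoid all $S_j$ with $j\neq i$. Within $V_i$ the crossing point can be slid along the connected (indeed contractible) manifold $S_i^\circ$ to the chosen point $p_i$, and the flanking arcs deformed inside $U$; the resulting path is a positive or negative transversal crossing of $S_i^\circ$ at $p_i$, whose class is $[\eta_i]^{\eps}$ by the uniqueness of the homotopy type of $\eta_i$ already established from the contractibility of $U$ and $S_i^\circ$. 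Equivalently, a van Kampen computation with the cover of $V_i$ by $U$ and a tubular neighborhood of $S_i^\circ$ gives $\pi_1(V_i)\cong\bZ$ generated by $[\eta_i]$ and detected by the algebraic intersection number with $S_i^\circ$, which equals $\eps$ for $\tau$. Granting the local claim, $[\gamma]=[\eta_{i_1}]^{\eps_1}\cdots[\eta_{i_m}]^{\eps_m}$, and since $\gamma$ was arbitrary the classes $[\eta_1],\dots,[\eta_n]$ generate $\pi_1(M(\scA),*)$.

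I expect the main obstacle to be the sign bookkeeping in the local claim: matching the topological crossing sign $\eps$ with the orientation conventions of \S\ref{subsec:ori} (via the criterion $\Im\varphi_*(\dot{c}(0))>0$) and confirming that it coincides with the algebraic intersection number used to trivialize $\pi_1(V_i)$, together with the care needed to keep the sliding homotopy inside $V_i$ so that no spurious crossings of other sails are created.
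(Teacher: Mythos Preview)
Your proposal is correct and follows essentially the same approach as the paper: perturb an arbitrary loop into general position so it misses the codimension-$2$ chambers and meets the $S_i^\circ$ transversally, then read off the word $[\eta_{m_1}]^{\eps_1}\cdots[\eta_{m_N}]^{\eps_N}$ from the ordered list of signed crossings. The paper states this decomposition without further comment, whereas you supply the local identification $[\tau_k]=[\eta_{i_k}]^{\eps_k}$ via the contractibility of $U$ and $S_i^\circ$ and the disjointness $S_i^\circ\cap S_j=\emptyset$ (a consequence of Theorem~\ref{thm:main}(i))---details the paper leaves implicit.
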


\proof
Let $\eta:[0,1]\rightarrow M(\scA)$ be a continuous 
map such that $\eta(0)=\eta(1)=*$. 
By the transversality homotopy theorem 
(e.g., \cite[Chap 2]{gui-pol}), 
we can perturb $\eta$ into a new loop 
such that the following hold: 
\begin{itemize}
\item 
The image of $\eta$ is disjoint from 
$\bigsqcup_{C\in\ch_2^\scF(\scA)}C$. 
\item 
The image of $\eta$ intersects 
$\bigsqcup_{i=1}^nS_i^\circ$ transversely. 
\end{itemize}
Suppose that 
$\eta^{-1}(\bigsqcup_{i=1}^nS_i^\circ)=
\{t_1, \dots, t_N\}$ with 
$0<t_1<\dots<t_N<1$ and 
$\eta(t_k)\in S_{m_k}^\circ$. 
From the transversality, the intersection number 
$\eps_k:=I_{\eta(t_k)}(S_{m_k}^\circ, \eta)$ 
is either $+1$ or $-1$ because of transversality. 
The class $[\eta]\in\pi_1(M(\scA), *)$ is expressed as 
$$
[\eta]=
[\eta_{m_1}]^{\eps_1}
[\eta_{m_2}]^{\eps_2}
\dots
[\eta_{m_N}]^{\eps_N}. 
$$
Thus any $[\eta]\in\pi_1(M)$ is generated by 
$[\eta_1], \dots, [\eta_n]$. 
\qed

\begin{remark}
If we fix the base point in $\scF^1_\bC=\scF^1\otimes\bC$, 
then we may choose transversal generators as in 
Figure \ref{fig:transvgen}. 
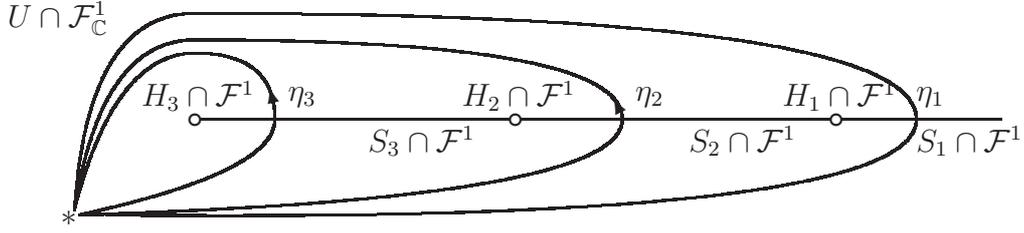
\begin{figure}[htbp]
\begin{picture}(100,100)(20,0)
\thicklines

\put(30,85){$U\cap\scF^1_\bC$}

\put(100,50){\circle{4}}
\put(80,55){$H_3\cap\scF^1$}
\put(220,50){\circle{4}}
\put(200,55){$H_2\cap\scF^1$}
\put(340,50){\circle{4}}
\put(320,55){$H_1\cap\scF^1$}

\put(102,50){\line(1,0){116}}
\put(165,38){$S_3\cap\scF^1$}
\put(222,50){\line(1,0){116}}
\put(285,38){$S_2\cap\scF^1$}
\put(342,50){\line(1,0){60}}
\put(370,38){$S_1\cap\scF^1$}

\put(50,10){$*$}

\qbezier(57,14)(130,30)(130,50)
\qbezier(130,50)(130,75)(100,75)
\qbezier(100,75)(70,75)(55,16)
\thicklines
\put(128,62){\vector(-1,4){0}}
\put(135,57){$\eta_3$}

\qbezier(57,14)(260,20)(260,50)
\qbezier(260,50)(260,80)(100,80)
\qbezier(100,80)(65,80)(55,16)
\thicklines
\put(256.8,58){\vector(-1,2){0}}
\put(265,57){$\eta_2$}

\qbezier(57,14)(370,10)(370,50)
\qbezier(370,50)(370,90)(100,90)
\qbezier(100,90)(60,90)(55,16)
\thicklines
\put(370,57){$\eta_1$}

\end{picture}
     \caption{Transversal generators $\eta_1, \eta_2, \eta_3$.}
\label{fig:transvgen}
\end{figure}

\end{remark}

\subsection{Chamber relations}
\label{subsec:chamber}

As we have seen in the previous section, 
the transversal generators determine a surjective 
homomorphism 
$$
G:
F\langle
\eta_1, \dots, \eta_n
\rangle
\longrightarrow
\pi_1(M(\scA), *), 
$$
from the free group generated by $\eta_1, \dots, \eta_n$ 
to $\pi_1(M(\scA), *)$. 
We will prove that the kernel of the above map is generated 
by conjugacy classes of meridian loops around chambers 
$C\subset M(\scA)$, $C\in\ch_2^\scF(\scA)$. 

Let $\eta:[0,1]\rightarrow M(\scA)$ be a loop with 
$\eta(0)=\eta(1)=*$. Suppose that $\eta$ represents 
an element of $\Ker G$. Then $\eta$ is null-homotopic in 
$M(\scA)$, and hence there is a homotopy 
$\sigma:[0,1]^2\rightarrow M(\scA)$ such that 
$\sigma(t,0)=\eta(t)$, 
$\sigma(t,1)=\sigma(0,s)=\sigma(1,s)=*$. 
We can perturb $\sigma$ in such a way that 
\begin{itemize}
\item $\sigma(\partial[0,1]^2)\cap \bigsqcup_{C\in\ch_2}C=\emptyset$. 
\item $\sigma$ intersects $\bigsqcup_{C\in\ch_2}C$ 
transversely. 
\end{itemize}
Let $\sigma^{-1}(\bigsqcup_{C\in\ch_2}C)=\{q_1, \dots, q_L\}$. 
We choose a meridian loop $v_i$ in $[0,1]^2$ around 
each point $q_i$ with the base point $(0,0)$. 
Let $\alpha:[0,1]\rightarrow \partial([0,1]^2)$ be the 
loop with the base point $(0,0)$ that goes along 
the boundary in the counter clockwise direction. 
Then $\alpha$ is homotopically equivalent to 
a product of meridians $v_1, \dots, v_n$. Since $\eta$ is 
homotopically equivalent to $\sigma\circ\alpha$, 
it is also homotopically equivalent to the 
product of meridian loops $\sigma\circ v_i$ that 
are meridian loops of chambers. (Figure \ref{fig:rel}.)

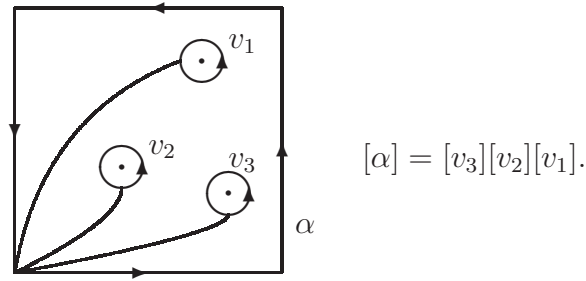
\begin{figure}[htbp]
\begin{picture}(100,100)(20,0)
\thicklines

\multiput(100,0)(0,100){2}{\line(1,0){100}}
\multiput(100,0)(100,0){2}{\line(0,1){100}}

\put(150,0){\vector(1,0){0}}
\put(150,100){\vector(-1,0){0}}
\put(100,50){\vector(0,-1){0}}
\put(200,50){\vector(0,1){0}}
\put(205,15){$\alpha$}

\put(140,40){\circle{16}}
\put(140,40){\circle*{2}}
\put(147.6,43){\vector(0,1){0}}
\qbezier(100,0)(140,20)(140,32)
\put(150,45){$v_2$}

\put(170,80){\circle{16}}
\put(170,80){\circle*{2}}
\put(177.6,83){\vector(0,1){0}}
\qbezier(100,0)(110,60)(162,80)
\put(180,85){$v_1$}

\put(180,30){\circle{16}}
\put(180,30){\circle*{2}}
\put(187.6,33){\vector(0,1){0}}
\qbezier(100,0)(180,15)(180,22)
\put(180,40){$v_3$}

\put(230,40){$[\alpha]=[v_3][v_2][v_1]$.} 

\end{picture}
     \caption{Inverse images of chambers.}\label{fig:rel}
\end{figure}

We will describe the relations more explicitly 
in \S\ref{subsec:dual}.

\subsection{Dual presentation}
\label{subsec:dual}

Let $i=1, \dots, n$ and $C\in\ch_2^\scF(\scA)$. 
We define the $i$-th degree 
$d_i(C)\in\{-1, 0, +1\}$ by 
\begin{equation}
d_i(C)=
\left\{
\begin{array}{cc}
-1& \mbox{ if }\alpha_{i-1}(C)<0<\alpha_i(C), \\
+1& \mbox{ if }\alpha_{i-1}(C)>0>\alpha_i(C), \\
0& \mbox{otherwise}.
\end{array}
\right.
\end{equation}
(Here we use the convention $\alpha_0=-1$, in particular, 
$\alpha_0(C)<0$ for any chamber $C$. See \S\ref{subsec:ex} for 
examples.) 

We will prove (in \S\ref{subsec:proofdual}) 
that the meridian loop of $C\subset M(\scA)$ 
($C\in\ch_2^\scF(\scA)$) is conjugate to the word 
\begin{equation}
\label{eq:releta}
E(C):=
\eta_n^{d_n(C)}
\eta_{n-1}^{d_{n-1}(C)}
\dots
\eta_1^{d_1(C)}
\cdot
\eta_n^{-d_n(C)}
\eta_{n-1}^{-d_{n-1}(C)}
\dots
\eta_1^{-d_1(C)}. 
\end{equation}

\begin{theorem}
\label{thm:dualpres}
With notation as above, the fundamental group 
$\pi_1(M(\scA), *)$ is isomorphic to the group 
defined by the presentation 
$$
\langle
\eta_1, \dots, \eta_n
\mid
E(C), C\in\ch_2^\scF(\scA)
\rangle. 
$$
\end{theorem}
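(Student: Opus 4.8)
The plan is to realize $\pi_1(M(\scA),*)$ as the quotient of the free group $F\langle\eta_1,\dots,\eta_n\rangle$ by the normal subgroup generated by the chamber relations, using the surjection $G$ already in hand. By Lemma~\ref{lem:generate} the map $G$ is surjective, so it suffices to identify $\Ker G$ with the normal closure of $\{E(C)\mid C\in\ch_2^\scF(\scA)\}$. First I would invoke the analysis of \S\ref{subsec:chamber}: if a loop $\eta$ represents an element of $\Ker G$, then a null-homotopy $\sigma:[0,1]^2\to M(\scA)$ can be perturbed to meet $\bigsqcup_{C\in\ch_2}C$ transversely in finitely many points, and $[\eta]$ decomposes as a product of conjugates of meridian loops $\sigma\circ v_i$ around those chambers. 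Thus every element of $\Ker G$ lies in the normal closure of the meridian classes, provided I know that each meridian loop around a chamber $C$ maps under $G$ to (a conjugate of) $E(C)$.

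The technical heart is therefore the claim that the meridian loop of a chamber $C\in\ch_2^\scF(\scA)$ is conjugate to the word $E(C)$ of \eqref{eq:releta}. This is precisely what \S\ref{subsec:proofdual} is advertised to prove, so I may assume it. The key computation is an intersection-number bookkeeping: a small meridian loop $\mu_C$ encircling $C$ must cross the sails $S_1,\dots,S_n$, and by the transversality description in \S\ref{subsec:ori} each crossing contributes $\eta_i^{\pm 1}$ according to the sign of the intersection. The degree $d_i(C)\in\{-1,0,+1\}$ is defined exactly to record whether $\mu_C$ crosses $S_i$ positively, negatively, or not at all, the trichotomy being governed by the signs of $\alpha_{i-1}(C)$ and $\alpha_i(C)$ as in Proposition in \S\ref{subsec:main}. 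The word $E(C)$ then arises as a commutator-type expression because the meridian goes out (crossing each $S_i$ with $d_i(C)$) into the chamber and comes back, reversing the crossings in the opposite order; the ordering $n,n-1,\dots,1$ reflects the spatial arrangement of the sails relative to the flag $\scF$, where $S_1$ is outermost and $S_n$ innermost along $\scF^1$.

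Having established both inclusions, I would assemble the standard presentation argument. The surjection $G$ factors through $F\langle\eta_1,\dots,\eta_n\rangle/\langle\!\langle E(C)\rangle\!\rangle$ since each $E(C)$, being conjugate to a meridian of a chamber that bounds a disk in $M(\scA)$, maps to the identity in $\pi_1$. Conversely $\Ker G$ is contained in the normal closure of the $E(C)$ by the transversality decomposition above. Hence the induced map from the quotient group to $\pi_1(M(\scA),*)$ is an isomorphism, which is exactly the asserted presentation.

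The step I expect to be the main obstacle is controlling the conjugating elements in the decomposition $[\eta]=\prod(\sigma\circ v_i)$ and matching them cleanly against $E(C)$ rather than merely against \emph{some} conjugate of a meridian. Concretely, the subtlety is that the meridian $\sigma\circ v_i$ is a loop based at $\sigma(0,0)=*$ traversed along a specific path in the homotopy $\sigma$, and one must verify that its free homotopy class is the conjugacy class of $E(C)$ independently of how $\sigma$ routes to the chamber. Since the relations are imposed as a normal closure, only the conjugacy class matters, so this obstacle is resolved by passing to normal closures rather than tracking basepoint-conjugacy precisely; but care is needed to ensure the finite set of distinct chamber-types $C\in\ch_2^\scF(\scA)$ genuinely exhausts all meridian classes that can occur, which is guaranteed by the transversality of $\sigma$ against the chambers together with the fact that $\ch_2^\scF(\scA)$ enumerates all codimension-two strata.
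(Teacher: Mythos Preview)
Your overall architecture matches the paper's---surjectivity from Lemma~\ref{lem:generate}, the kernel generated by conjugates of chamber meridians via the transversality argument of \S\ref{subsec:chamber}, and identification of each meridian with a conjugate of $E(C)$---but your heuristic for that last identification does not produce the correct word. You picture the meridian as going ``out (crossing each $S_i$ with $d_i(C)$) into the chamber and com[ing] back, reversing the crossings in the opposite order''; read literally this yields $\eta_n^{d_n}\cdots\eta_1^{d_1}\cdot\eta_1^{-d_1}\cdots\eta_n^{-d_n}$, which is the identity in the free group and imposes no relation. The actual $E(C)$ in \eqref{eq:releta} has its second block in the \emph{same} order $n,n-1,\dots,1$, and that is precisely what makes the relation nontrivial. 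Your attribution of the ordering to ``$S_1$ outermost and $S_n$ innermost along $\scF^1$'' is also not the operative mechanism, since the meridian is taken at a point $p\in C$ far from $\scF^1$.

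What the paper actually does is take the concrete loop $\gamma(t)=p+\ii\eps(\cos t,\sin t)$ with $p\in C$ fixed: the real part never leaves $C$, so there is no ``out and back.'' By Proposition~\ref{prop:int3}, $\gamma$ meets $S_i$ precisely at $t=|\pv_i(p)|$ and $t=|\pv_i(p)|+\pi$, with intersection numbers $d_i(C)$ and $-d_i(C)$, where $\pv_i(p)$ is the \emph{pivotal argument}---the direction from $H_{i-1}\cap H_i$ toward $p$. The ordering of the crossings is then forced by Proposition~\ref{prop:between}, which gives $\theta_i\le|\pv_i(p)|\le\theta_{i-1}$ whenever $d_i(C)\neq 0$; combined with the slope inequalities $\theta_n\le\cdots\le\theta_1$ of \eqref{eq:ineq}, this makes the crossings on the half-circle $t\in[0,\pi)$ occur in the order $S_n,\dots,S_1$, and those on $t\in[\pi,2\pi)$ again in the order $S_n,\dots,S_1$ (each shifted by $\pi$) with reversed signs. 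That pivotal-argument bookkeeping is the missing ingredient: without it you cannot pin down the meridian word as $E(C)$ rather than a trivial or unrelated word in the $\eta_i^{\pm1}$.
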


\begin{remark}
The information about homotopy type of 
$M(\scA)$ is encoded in the degree map 
$d_i:\ch_2^\scF(\scA)\rightarrow
\{0,\pm1\}$. 
Indeed, it plays a role when we present 
cellular chain complex with coefficients in 
a local system (see \S\ref{subsec:twisted}). 
\end{remark}


Before proving Theorem \ref{thm:dualpres} we introduce 
some terminology. 

\subsection{Pivotal argument}

Let us denote the argument of the line $H_i$ by $\theta_i$, that 
is the angle of two positive half lines of $\scF^1$ and $H_i$ 
(see Figure \ref{fig:pv}). 
By the assumption on generic flag, arguments 
$\theta_1, \dots, \theta_n$ satisfy 
\begin{equation}
\label{eq:ineq}
0<\theta_n\leq \theta_{n-1}\leq \dots \leq\theta_1<\pi. 
\end{equation}

\begin{remark}
Sometimes it is convenient to define 
$\theta_0:=\theta_1$. 
\end{remark}

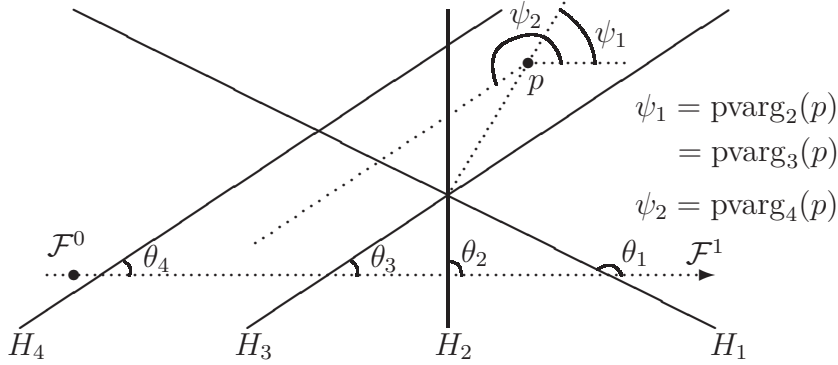
\begin{figure}[htbp]
\begin{picture}(100,120)(20,0)
\thicklines

\put(60,20){\circle*{4}}
\put(50,27){$\scF^0$}

\multiput(50,20)(3,0){83}{\circle*{1}}
\put(300,20){\vector(1,0){0}}
\put(289,24){$\scF^1$}

\put(300,0){\line(-2,1){240}}
\put(298,-10){$H_1$}
\qbezier(256,22)(262,26)(265,20)
\put(265,25){$\theta_1$}

\put(200,0){\line(0,1){120}}
\put(195,-10){$H_2$}
\qbezier(205,20)(205,25)(200,25)
\put(205,25){$\theta_2$}

\put(125,0){\line(3,2){180}}
\put(120,-10){$H_3$}
\qbezier(166,20)(167,23)(164,25)
\put(171,23){$\theta_3$}

\put(40,0){\line(3,2){180}}
\put(35,-10){$H_4$}
\qbezier(81,20)(82,23)(79,25)
\put(86,23){$\theta_4$}

\put(230,100){\circle*{4}}
\put(230,90){$p$}
\multiput(200,50)(1.5,2.5){30}{\circle*{1.25}}

\multiput(230,100)(3,0){13}{\circle*{1.25}}
\qbezier(242,120)(254,112)(255,100)
\put(255,108){$\psi_1$}

\multiput(230,100)(-3,-2){35}{\circle*{1.25}}
\qbezier(242.5,100)(238,116)(222,108)
\qbezier(222,108)(214,104)(218,92)
\put(223,115){$\psi_2$}

\put(270,80){$\psi_1=\pv_2(p)$}
\put(286,63){$=\pv_3(p)$}
\put(270,43){$\psi_2=\pv_4(p)$}

\end{picture}
     \caption{Pivotal arguments.}\label{fig:pv}
\end{figure}

\begin{definition}
Let $p=(x_1, x_2)\in\bR^2$ be a point different from 
$H_i\cap H_{i-1}$. For $i=1, \dots, n$, define the 
{\em $i$-th pivotal argument} $\pv_i(p)\in [0,2\pi)$ by 
$$
\pv_i(p)=
\left\{
\begin{array}{cl}
\arg(\stackrel{\rightarrow}{qp}),&
\mbox{ if $i>1$ and }H_i\cap H_{i-1}(\neq\emptyset)=\{q\},\\
&\\
\theta_i+\pi,&\mbox{ if $i=1$ or $i>1$, $H_i$ is parallel to $H_{i-1}$}. 
\end{array}
\right.
$$
And also 
$$
|\pv_i(p)|=
\left\{
\begin{array}{cl}
\pv_i(p),&
\mbox{ if }0\leq\pv_i(p)<\pi,\\
\pv_i(p)-\pi,&
\mbox{ if }\pi\leq\pv_i(p)<2\pi.
\end{array}
\right.
$$
\end{definition}

We have the following. 

\begin{proposition}
\label{prop:between}
Let $p\in\bR^2$. Suppose $\alpha_i(p)\cdot\alpha_{i-1}(p)<0$ ($i>1$). 
\begin{itemize}
\item 
If $H_{i-1}$ and $H_i$ intersects, 
then $\theta_i<|\pv_i(p)|<\theta_{i-1}$. 
\item 
If $H_{i-1}$ and $H_i$ are parallel, 
then $\theta_i=|\pv_i(p)|=\theta_{i-1}$. 
\end{itemize}
\end{proposition}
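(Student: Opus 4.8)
The plan is to reduce the whole statement to a one–variable trigonometric sign computation, once the defining forms are put into a normal form compatible with the conventions of \S\ref{subsec:numbering}. First I would record the sign normalization forced by those conventions. Since the positive half-line of $H_i$ makes angle $\theta_i\in(0,\pi)$ with the positive part of $\scF^1$, the line $H_i$ has a unit normal of the form $(\sin\theta_i,-\cos\theta_i)$; evaluating the associated affine form at the origin gives $-a_i\sin\theta_i<0$, which is exactly the requirement that $\scF^0\in\{\alpha_i<0\}$. Hence, up to a positive scalar (which is irrelevant for signs), I may assume $\nabla\alpha_i=(\sin\theta_i,-\cos\theta_i)$, and likewise for $\alpha_{i-1}$.

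Next I would treat the intersecting case. Put $q=H_i\cap H_{i-1}$ and write $p=q+r(\cos\phi,\sin\phi)$ with $r>0$ and $\phi=\pv_i(p)=\arg(\overrightarrow{qp})$. Because $\alpha_i$ and $\alpha_{i-1}$ are affine and vanish at $q$, I expand $\alpha_i(p)=\nabla\alpha_i\cdot\overrightarrow{qp}=r\sin(\theta_i-\phi)$ and, similarly, $\alpha_{i-1}(p)=r\sin(\theta_{i-1}-\phi)$, each up to its own positive constant. Therefore $\alpha_i(p)$ has the same sign as $\sin(\theta_i-\phi)$ and $\alpha_{i-1}(p)$ the same sign as $\sin(\theta_{i-1}-\phi)$, so the hypothesis $\alpha_i(p)\alpha_{i-1}(p)<0$ is equivalent to $\sin(\theta_i-\phi)\sin(\theta_{i-1}-\phi)<0$.

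I would then convert this product by the product-to-sum identity into $\tfrac12\bigl[\cos(\theta_{i-1}-\theta_i)-\cos(\theta_i+\theta_{i-1}-2\phi)\bigr]$, so negativity is equivalent to $\cos(\theta_i+\theta_{i-1}-2\phi)>\cos(\theta_{i-1}-\theta_i)$. Writing this as $\cos 2\psi>\cos\delta$ with $\psi=\tfrac{\theta_i+\theta_{i-1}}2-\phi$ and $\delta=\theta_{i-1}-\theta_i$, and using that $\cos$ is even and strictly decreasing on $[0,\pi]$, the inequality reads $\psi\in(-\delta/2,\delta/2)\pmod\pi$, i.e. $\phi\in(\theta_i,\theta_{i-1})\pmod\pi$. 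From the chain \eqref{eq:ineq} we have $\theta_i\le\theta_{i-1}$, and equality is excluded because distinct intersecting lines have different directions in $(0,\pi)$; thus $0<\theta_i<\theta_{i-1}<\pi$. Since $|\pv_i(p)|$ is by definition the representative of $\phi$ modulo $\pi$ lying in $[0,\pi)$, the condition becomes exactly $\theta_i<|\pv_i(p)|<\theta_{i-1}$, which proves the claim (in fact as an equivalence).

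The parallel case is then immediate: parallel lines share a direction, so $\theta_i=\theta_{i-1}$, while by definition $\pv_i(p)=\theta_i+\pi$, whence $|\pv_i(p)|=\theta_i=\theta_{i-1}$ for every admissible $p$. The step I expect to need the most care is the bookkeeping of the reductions modulo $\pi$: one must verify that the interval coming out of the cosine inequality has length $\delta<\pi$, so that it descends to a single genuine interval $(\theta_i,\theta_{i-1})\subset[0,\pi)$ without wrap-around, and that the orientation normalization of the $\alpha_j$ (fixed by $\scF^0$ lying on the negative side) is applied identically to both forms, so no spurious sign is introduced in the product.
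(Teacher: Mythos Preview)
Your argument is correct. The paper does not supply a proof of this proposition at all; it is stated as an elementary observation immediately following the definition of $\pv_i$ and is used freely thereafter. So there is no ``paper's proof'' to compare against.

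Your normalization $\nabla\alpha_j\parallel(\sin\theta_j,-\cos\theta_j)$ with positive scalar is the right one under the conventions of \S\ref{subsec:numbering}, and your expansion $\alpha_j(p)=c_j\,r\sin(\theta_j-\phi)$ with $c_j>0$ is correct. The product-to-sum step is valid, though it is a bit heavier than needed: once you have $\sin(\theta_i-\phi)\sin(\theta_{i-1}-\phi)<0$, it is immediate that $\phi$ lies strictly between $\theta_i$ and $\theta_{i-1}$ modulo $\pi$, since $\sin(\theta-\phi)$ changes sign exactly when $\phi$ crosses $\theta$ modulo $\pi$ and $0<\theta_{i-1}-\theta_i<\pi$. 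Your careful tracking of the $\bmod\ \pi$ reduction and the length of the interval is exactly what is needed to finish, and the parallel case is handled correctly from the definition.
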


Using pivotal arguments, we can describe the intersection 
number of the sail $S_i=S(\alpha_{i-1}, \alpha_i)\cap M(\scA)$ 
and a curve, which is a generalization of Example \ref{ex:int}. 

\begin{example}
\label{ex:int2}
Let $p(x_1, x_2)\in\bR^2\setminus\bigcup_{H\in\scA}H$ and 
$\eps>0$. 
Consider the loop 
$$
\begin{array}{rccl}
\gamma:&\bR/2\pi\bZ&\longrightarrow&M(\scA)\\
&&&\\
&t&\longmapsto&
(x_1, x_2)+
\ii\eps
(\cos t, \sin t). 
\end{array}
$$
If $\alpha_i(p)\cdot\alpha_{i-1}(p)>0$, then 
$\gamma$ does not intersect $S_i$. 
If $\alpha_i(p)\cdot\alpha_{i-1}(p)<0$, then 
$
\gamma^{-1}(S_i)=\{\pv_i(p), \pv_i(p)+\pi \}. 
$
We have 
\begin{eqnarray*}
&&I_{\gamma(\pv_i(p))}(S_i, \gamma)=1, \mbox{ and} \\
&&I_{\gamma(\pv_i(p)+\pi)}(S_i, \gamma)=-1.
\end{eqnarray*}
\end{example}

Combining this with the degree $d_i$, we have the 
following. 

\begin{proposition}
\label{prop:int3}
Let $p(x_1, x_2)\in\bR^2\setminus\bigcup_{H\in\scA}H$ and 
the loop $\gamma$ be as in Example \ref{ex:int2}. 
Let us denote by $C$ the chamber which contains $p$. 
We have 
\begin{eqnarray*}
&&I_{\gamma(|\pv_i(p)|)}(S_i, \gamma)=d_i(C), \mbox{ and} \\
&&I_{\gamma(|\pv_i(p)|+\pi)}(S_i, \gamma)=-d_i(C).
\end{eqnarray*}
\end{proposition}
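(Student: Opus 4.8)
The plan is to deduce Proposition \ref{prop:int3} directly from Example \ref{ex:int2} by matching the two parametrizations of the intersection points. Example \ref{ex:int2} already records that, when $\alpha_i(p)\alpha_{i-1}(p)<0$, the loop $\gamma$ meets $S_i$ exactly at the parameters $\pv_i(p)$ and $\pv_i(p)+\pi$, with intersection numbers $+1$ and $-1$ respectively, and that $\gamma$ misses $S_i$ entirely when $\alpha_i(p)\alpha_{i-1}(p)>0$. Since $\gamma$ has period $2\pi$, the set $\{|\pv_i(p)|,\,|\pv_i(p)|+\pi\}$ coincides modulo $2\pi$ with $\{\pv_i(p),\,\pv_i(p)+\pi\}$, so the only remaining question is which of $|\pv_i(p)|$ and $|\pv_i(p)|+\pi$ carries the $+1$ and which carries the $-1$. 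By the definition of $|\,\cdot\,|$ this is governed entirely by whether $\pv_i(p)$ lies in $[0,\pi)$ or in $[\pi,2\pi)$.

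The key step, and the real content of the proposition, is therefore the geometric lemma: in the nondegenerate case one has $d_i(C)=+1$ exactly when $\pv_i(p)\in[0,\pi)$ (so $|\pv_i(p)|=\pv_i(p)$) and $d_i(C)=-1$ exactly when $\pv_i(p)\in[\pi,2\pi)$ (so $|\pv_i(p)|=\pv_i(p)-\pi$). To prove this I would first pin down the sign conventions of \S\ref{subsec:numbering}: writing $H_i$ as the line through $(a_i,0)$ with direction $(\cos\theta_i,\sin\theta_i)$, we may normalize $\alpha_i(x_1,x_2)=r_i\bigl[-\sin\theta_i\,(x_1-a_i)+\cos\theta_i\,x_2\bigr]$, and evaluating at $\scF^0$ together with $a_i>0$, $\sin\theta_i>0$, and $\scF^0\in\{\alpha_i<0\}$ forces $r_i<0$ for every $i$. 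In the intersecting case, writing $p=q+\rho(\cos\pv_i(p),\sin\pv_i(p))$ with $q=H_i\cap H_{i-1}$ and $\rho>0$, a direct computation gives $\alpha_i(p)=\rho\,r_i\sin(\pv_i(p)-\theta_i)$ and $\alpha_{i-1}(p)=\rho\,r_{i-1}\sin(\pv_i(p)-\theta_{i-1})$.

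Combining these formulas with Proposition \ref{prop:between}, which guarantees $\theta_i<|\pv_i(p)|<\theta_{i-1}$, I would read off the signs of the two sines in each of the cases $\pv_i(p)=|\pv_i(p)|$ and $\pv_i(p)=|\pv_i(p)|+\pi$. Since $r_i,r_{i-1}<0$, the first case yields $\alpha_{i-1}(p)>0>\alpha_i(p)$, hence $d_i(C)=+1$, and the second yields $\alpha_{i-1}(p)<0<\alpha_i(p)$, hence $d_i(C)=-1$, which is exactly the claimed lemma. Substituting back into Example \ref{ex:int2} then gives $I_{\gamma(|\pv_i(p)|)}(S_i,\gamma)=d_i(C)$ and $I_{\gamma(|\pv_i(p)|+\pi)}(S_i,\gamma)=-d_i(C)$ in the nondegenerate case; the degenerate case $\alpha_i(p)\alpha_{i-1}(p)>0$ forces $d_i(C)=0$ and, by Example \ref{ex:int2}, no intersection, so both numbers vanish and still agree with $\pm d_i(C)$.

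I would close by checking the two boundary conventions separately: the case $i=1$ (where $\alpha_0\equiv-1$ and $\pv_1(p)=\theta_1+\pi\in[\pi,2\pi)$) and the parallel case (where $\theta_i=\theta_{i-1}$ and again $\pv_i(p)=\theta_i+\pi$). In both, $\pv_i(p)$ lands in $[\pi,2\pi)$ and the sign bookkeeping collapses to the $d_i(C)=-1$ pattern, so the same substitution applies. The only genuine obstacle is the sector/sign analysis underlying the geometric lemma—tracking the $\sin$ signs across the two half-ranges of $\pv_i(p)$ and confirming the forced sign $r_i<0$—after which the proposition reduces to a mechanical substitution into the already-established Example \ref{ex:int2}.
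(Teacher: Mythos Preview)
Your proposal is correct and follows exactly the route the paper intends: the paper states the proposition with only the phrase ``Combining this with the degree $d_i$, we have the following,'' so it is treating the result as an immediate consequence of Example~\ref{ex:int2} together with the definition of $d_i(C)$, which is precisely the reduction you carry out. Your explicit verification that $\pv_i(p)\in[0,\pi)$ iff $d_i(C)=+1$ (via the normalization $r_i<0$ and the formula $\alpha_i(p)=\rho r_i\sin(\pv_i(p)-\theta_i)$) simply supplies the sign bookkeeping the paper leaves to the reader.
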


\subsection{Proof of Theorem \ref{thm:dualpres}}
\label{subsec:proofdual}

Now we prove Theorem \ref{thm:dualpres}. 
Let $C\in\ch_2^\scF(\scA)$ and $p\in C$. 
We take a meridian loop $\gamma:\bR/2\pi\bZ\rightarrow
M(\scA), t\mapsto \gamma(t)$ as in Example \ref{ex:int2}. 
Then $\gamma$ intersects $S_i$ at $t=|\pv_i(p)|$ and 
$t=|\pv_i(p)|+\pi$ with intersection numbers 
$d_i(C)$ and $-d_i(C)$, respectively. (This logically 
includes that $\gamma$ does not intersect $C$ 
if and only if $d_i(C)=0$.) 
In particular, from Proposition \ref{prop:between}, 
$\theta_{i-1}\leq|\pv_i(p)|\leq\theta_i$ provided 
$d_i(C)\neq 0$. 
From Eq. (\ref{eq:ineq}), 
the loop $\gamma$ intersects 
$S_n, S_{n-1}, \dots, S_1, S_n, S_{n-1}, \dots, S_1$ 
in this order 
with intersection numbers 
$d_n(C), d_{n-1}(C), \dots, d_1(C), 
-d_n(C), -d_{n-1}(C), \dots, -d_1(C)$. 
Hence the loop $\gamma$ is homotopic to the word 
$E(C)$ in Eq. (\ref{eq:releta}).

\subsection{Examples}
\label{subsec:ex}

\begin{example}
\label{ex:5lines}
Let $\scA=\{H_1, \dots, H_5\}$ be 
a line arrangement and $\scF$ be a 
flag pictured in Figure \ref{fig:5lines}. Then 
$\ch_2^\scF(\scA)=\{C_6, C_7, \dots, C_{12}\}$ 
consists of $7$ chambers. The degrees 
can be computed as follows. 
$$
\begin{array}{c|ccccc}
       &d_1&d_2&d_3&d_4&d_5\\
\hline
C_6    & 0 &0  &-1 & 1 &-1 \\
C_7    & 0 &-1 & 0 & 1 &-1 \\
C_8    & 0 &-1 & 1 & 0 &-1 \\
C_9    & 0 &-1 & 1 & 0 & 0 \\
C_{10} &-1 &0  & 1 & 0 & 0 \\
C_{11} &-1 &0  & 0 & 1 & 0 \\
C_{12} &-1 &0  & 0 & 1 &-1 
\end{array}
$$
The fundamental group $\pi_1(M(\scA), *)$ has the following 
presentation. 
\begin{equation*}
\begin{split}
\pi_1(M(\scA),*)=
\langle
\eta_1, \dots, \eta_5\mid 
E(C_6):&\  
\eta_5^{-1}\eta_4^{}\eta_3^{-1}
\eta_5^{}\eta_4^{-1}\eta_3^{}\\
E(C_7):&\  
\eta_5^{-1}\eta_4^{}\eta_2^{-1}
\eta_5^{}\eta_4^{-1}\eta_2^{}\\
E(C_8):&\  
\eta_5^{-1}\eta_3^{}\eta_2^{-1}
\eta_5^{}\eta_3^{-1}\eta_2^{}\\
E(C_9):&\  
\eta_3^{}\eta_2^{-1}
\eta_3^{-1}\eta_2^{}\\
E(C_{10}):&\  
\eta_3^{}\eta_1^{-1}
\eta_3^{-1}\eta_1^{}\\
E(C_{11}):&\  
\eta_4^{}\eta_1^{-1}
\eta_4^{-1}\eta_1^{}\\
E(C_{12}):&\  
\eta_5^{-1}\eta_4^{}\eta_1^{-1}
\eta_5^{}\eta_4^{-1}\eta_1^{}\rangle. 
\end{split}
\end{equation*}

\begin{figure}[htbp]
\begin{picture}(100,110)(20,-10)
\thicklines

\put(60,10){\circle*{4}}
\put(50,17){$\scF^0$}

\multiput(50,10)(3,0){108}{\circle*{1}}
\put(350,10){\vector(1,0){0}}
\put(339,14){$\scF^1$}

\put(350,0){\line(-2,1){200}}
\put(350,-10){$H_1$}

\put(280,0){\line(-2,1){200}}
\put(280,-10){$H_2$}

\put(150,0){\line(2,3){67}}
\put(150,-10){$H_3$}

\put(120,0){\line(2,1){200}}
\put(110,-10){$H_4$}

\put(50,0){\line(2,1){200}}
\put(40,-10){$H_5$}

\footnotesize
\put(100,50){$C_0$}
\put(300,50){$C_1$}
\put(240,30){$C_2$}
\put(195,20){$C_3$}
\put(130,-5){$C_4$}
\put(130,20){$C_5$}
\put(178,37){$C_6$}
\put(200,50){$C_7$}
\put(172,54){$C_8$}
\put(150,80){$C_9$}
\put(190,90){$C_{10}$}
\put(216,93){$C_{11}$}
\put(250,80){$C_{12}$}

\end{picture}
\caption{Example \ref{ex:5lines} and \ref{ex:5lines2}.}
\label{fig:5lines}
\end{figure}
\end{example}

\begin{example}
\label{ex:6lines}
Let $\scA=\{H_1, \dots, H_5, H_6\}$ be 
a line arrangement and $\scF$ be a 
flag pictured in Figure \ref{fig:6lines}. Then 
$\ch_2^\scF(\scA)=\{C_7, C_8, \dots, C_{17}\}$ 
consists of $11$ chambers. The degrees 
can be computed as follows. 
$$
\begin{array}{c|cccccc}
       &d_1&d_2&d_3&d_4&d_5&d_6\\
\hline
C_7    &-1 & 1 & 0 &-1 & 0 &0 \\
C_8    &-1 & 1 & 0 & 0 &-1 &0 \\
C_9    &-1 & 1 & 0 & 0 &-1 &1 \\
C_{10} &-1 &0  & 1 & 0 &-1 &1 \\
C_{11} &-1 &0  & 1 &-1 & 0 &1 \\
C_{12} &-1 &0  & 1 &-1 & 0 &0 \\
C_{13} &-1 & 1 & 0 & 0 & 0 &0 \\
C_{14} &-1 &0  & 1 & 0 & 0 &0 \\
C_{15} &-1 &0  & 0 & 1 & 0 &0 \\
C_{16} &-1 &0  & 0 & 0 & 1 &0 \\
C_{17} &-1 &0  & 0 & 0 & 0 &1 
\end{array}
$$
The fundamental group $\pi_1(M(\scA), *)$ has the following 
presentation. 
\begin{equation*}
\begin{split}
\pi_1(M(\scA),*)=
\langle
\eta_1, \dots, \eta_6\mid 
E(C_7):&\ 
\eta_4^{-1}\eta_2^{}\eta_1^{-1}
\eta_4^{}\eta_2^{-1}\eta_1^{}\\
E(C_8):&\ 
\eta_5^{-1}\eta_2^{}\eta_1^{-1}
\eta_5^{}\eta_2^{-1}\eta_1^{}\\
E(C_9):&\ 
\eta_6^{}\eta_5^{-1}\eta_2^{}\eta_1^{-1}
\eta_6^{-1}\eta_5^{}\eta_2^{-1}\eta_1^{}\\
E(C_{10}):&\ 
\eta_6^{}\eta_5^{-1}\eta_3^{}\eta_1^{-1}
\eta_6^{-1}\eta_5^{}\eta_3^{-1}\eta_1^{}\\
E(C_{11}):&\ 
\eta_6^{}\eta_4^{-1}\eta_3^{}\eta_1^{-1}
\eta_6^{-1}\eta_4^{}\eta_3^{-1}\eta_1^{}\\
E(C_{12}):&\ 
\eta_4^{-1}\eta_3^{}\eta_1^{-1}
\eta_4^{}\eta_3^{-1}\eta_1^{}\\
E(C_{13}):&\ 
\eta_2^{}\eta_1^{-1}
\eta_2^{-1}\eta_1^{}\\
E(C_{14}):&\ 
\eta_3^{}\eta_1^{-1}
\eta_3^{-1}\eta_1^{}\\
E(C_{15}):&\ 
\eta_4^{}\eta_1^{-1}
\eta_4^{-1}\eta_1^{}\\
E(C_{16}):&\ 
\eta_5^{}\eta_1^{-1}
\eta_5^{-1}\eta_1^{}\\
E(C_{17}):&\ 
\eta_6^{}\eta_1^{-1}
\eta_6^{-1}\eta_1^{}\rangle
\end{split}
\end{equation*}
The relations 
$E(C_{13}), \dots, E(C_{17})$, indicate that 
the large loop $\eta_1$ is contained in 
the center of the group. 

\begin{figure}[htbp]
\begin{picture}(100,120)(20,0)
\thicklines

\put(40,20){\circle*{4}}
\put(30,26){$\scF^0$}

\multiput(35,20)(3,-0.05){114}{\circle*{1}}
\put(360,15){\vector(1,0){0}}
\put(349,19){$\scF^1$}

\qbezier(360,10)(120,35)(60,40)
\put(360,0){$H_1$}

\qbezier(259,10)(220,60)(190,98)
\put(265,0){$H_2$}

\put(250,10){\line(0,1){110}}
\put(233,3){$H_3$}

\put(182.5,10){\line(3,4){82.5}}
\put(172,0){$H_4$}

\put(70,10){\line(2,1){220}}
\put(70,0){$H_5$}

\put(40,15){\line(4,1){330}}
\put(30,5){$H_6$}

\footnotesize 
\put(60,29){$C_0$}
\put(300,40){$C_1$}
\put(300,0){$C_2$}
\put(250,0){$C_3$}
\put(210,0){$C_4$}
\put(130,0){$C_5$}
\put(53,8){$C_6$}
\put(215,30){$C_7$}
\put(180,35){$C_8$}
\put(195,60){$C_9$}
\put(214,74){$C_{10}$}
\put(234,72){$C_{11}$}
\put(234,50){$C_{12}$}
\put(120,60){$C_{13}$}
\put(210,100){$C_{14}$}
\put(250,120){$C_{15}$}
\put(271,120){$C_{16}$}
\put(271,90){$C_{17}$}

\end{picture}
\caption{Example \ref{ex:6lines} and \ref{ex:6lines2}.}
\label{fig:6lines}
\end{figure}
\end{example}

\begin{remark}
Example \ref{ex:6lines} 
gives a presentation for the pure braid group 
with $4$-strands. See also Example \ref{ex:6lines2}. 
\end{remark}

\subsection{Twisted minimal chain complex}
\label{subsec:twisted}

Let $\scL$ be a complex rank one local system 
on $M(\scA)$. $\scL$ is determined by 
nonzero complex numbers (monodromy around $H_i$) 
$q_i\in\bC^*$, $i=1, \dots, n$. 
Fix a square root $q_i^{1/2}\in\bC^*$ for each $i$. 
For given chambers $C, C'$, let us define 
$$
\Delta(C,C'):=
\prod_{H_i\in\Sep(C,C')}q^{1/2}
-
\prod_{H_i\in\Sep(C,C')}q^{-1/2}, 
$$
where $H_i\in\Sep(C,C')$ runs over all 
hyperplanes which separate $C$ and $C'$. 
With these notation, we can describe 
a chain complex which computes homology 
groups with coefficients in $\scL$. 

\begin{theorem}
\label{thm:twist}
Denote by 
$\bC[\ch_k^\scF(\scA)]:=\bigoplus_{C\in\ch_k^\scF(\scA)}\bC\cdot[C]$ 
the vector space spanned by $\ch_k^\scF(\scA)$. 
Recall that $\ch_1^\scF(\scA)=\{C_1, C_2, \dots, C_n\}$ and 
$\ch_0^\scF(\scA)=\{C_0\}$. 
Then the linear maps 
\begin{equation*}
\begin{split}
\nabla:\ch_2^\scF(\scA)\longrightarrow\ch_1^\scF(\scA), &\ 
[C]\longmapsto
\sum_{i=1}^n
d_i(C)\Delta(C,C_i)[C_i], \\
\nabla:\ch_1^\scF(\scA)\longrightarrow
\ch_0^\scF(\scA), &\ 
[C_i]\longmapsto\Delta(C_0,C_i)[C_0], 
\end{split}
\end{equation*}
determines a chain complex $(\bC[\ch_\bullet^\scF(\scA)], \nabla)$ 
which homology group is isomorphic to 
$$
H_k(\bC[\ch_\bullet^\scF(\scA)], \nabla)
\simeq
H_k(M(\scA), \scL). 
$$
\end{theorem}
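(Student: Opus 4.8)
The plan is to identify $(\bC[\ch_\bullet^\scF(\scA)],\nabla)$ with the twisted cellular chain complex of a CW model of $M(\scA)$, after a diagonal change of basis. Theorem~\ref{thm:main} presents $M(\scA)$ as dual to a minimal CW complex $K$ with a single $0$-cell (dual to $U$), $1$-cells $\eta_1,\dots,\eta_n$ (dual to the sheets $S_i^\circ$), and $2$-cells $\{e_C\}_{C\in\ch_2^\scF(\scA)}$ attached along the words $E(C)$ of~(\ref{eq:releta}); since $M(\scA)$ has the homotopy type of a minimal $2$-complex, Theorem~\ref{thm:dualpres} upgrades to $K\simeq M(\scA)$. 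Writing $\rho:\pi_1(M(\scA))\to\bC^*$ for the monodromy representation of $\scL$, the twisted cellular complex of $K$ is $\bC[\ch_2^\scF(\scA)]\xrightarrow{\partial_2}\bC[\ch_1^\scF(\scA)]\xrightarrow{\partial_1}\bC[\ch_0^\scF(\scA)]$ with $\partial_1(\eta_i)=(\rho(\eta_i)-1)[C_0]$ and $\partial_2(e_C)=\sum_{i=1}^n\rho\!\left(\tfrac{\partial E(C)}{\partial\eta_i}\right)[C_i]$, where $\tfrac{\partial}{\partial\eta_i}$ denotes the Fox derivative; this complex computes $H_\bullet(M(\scA),\scL)$, so it remains to match it with $\nabla$.

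First I would compute the monodromies. The transversal loop $\eta_i$ meets the sail $S_i=S(\alpha_{i-1},\alpha_i)$ once, and restricting to $\scF^1_\bC$ it is a loop in $\bC$ minus $\{a_1,\dots,a_n\}$ crossing the real segment $(a_i,a_{i-1})$, hence linking exactly $a_i,\dots,a_n$; therefore $\rho(\eta_i)=\prod_{j=i}^n q_j=:Q_i$. Since $\Sep(C_0,C_i)=\{H_i,\dots,H_n\}$, replacing the basis vector $\eta_i$ by $Q_i^{-1/2}\eta_i=\prod_{H_j\in\Sep(C_0,C_i)}q_j^{-1/2}\cdot\eta_i$ turns $\partial_1(\eta_i)=(Q_i-1)[C_0]$ into $(Q_i^{1/2}-Q_i^{-1/2})[C_0]=\Delta(C_0,C_i)[C_0]$, which is the lower differential of $\nabla$.

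For $\partial_2$ I would run the Fox calculus on $E(C)=w\bar w$, with $w=\eta_n^{d_n(C)}\cdots\eta_1^{d_1(C)}$ and $\bar w=\eta_n^{-d_n(C)}\cdots\eta_1^{-d_1(C)}$. A direct computation gives $\rho(\partial E(C)/\partial\eta_i)=0$ exactly when $d_i(C)=0$, and otherwise a monomial in the $q_j$ times the binomial supported on $\Sep(C,C_i)$, the sign being $d_i(C)$; the key input is the telescoping identity that $\sum_{j=1}^{k}d_j(C)$ equals $-1$ if $\alpha_k(C)>0$ and $0$ if $\alpha_k(C)<0$, which forces the ratio of the two prefixes flanking $\eta_i^{\pm1}$ to be $\prod_{H_k\in\Sep(C,C_i)}q_k$. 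The sign $d_i(C)$ is exactly the intersection number of the meridian of $C$ with $S_i$ recorded in Proposition~\ref{prop:int3}. Combining this with the rescaling $\eta_i\mapsto Q_i^{-1/2}\eta_i$ above, one checks that the remaining monomial prefactor equals $\prod_{k:\alpha_k(C)>0}q_k^{-1/2}$ in both cases $d_i(C)=\pm1$, independent of $i$; hence rescaling $e_C$ by $\prod_{k:\alpha_k(C)>0}q_k^{1/2}$ converts the $(i,C)$ entry into $d_i(C)\Delta(C,C_i)$, matching the upper differential of $\nabla$. The identity $\nabla^2=0$ is then inherited from $\partial_1\partial_2=0$.

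The main obstacle is precisely this last bookkeeping: showing that the monomial prefactors produced by the Fox derivatives are absorbed by a \emph{single} rescaling of $e_C$ that is consistent across all $i$ with $d_i(C)\neq0$, and that every half-integer exponent assembles into the symmetric separation set $\Sep(C,C_i)$ with the sign dictated by $d_i(C)$. Proposition~\ref{prop:int3} removes the orientation ambiguity by giving each sign a geometric meaning, and the telescoping identity for $\sum_j d_j(C)$ controls the exponents, so the computation is routine once these are in place. A secondary point requiring care is the passage from $\pi_1(K)\cong\pi_1(M(\scA))$ to $K\simeq M(\scA)$, which is where the minimality in Theorem~\ref{thm:main} is indispensable.
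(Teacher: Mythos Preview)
The paper does not actually prove Theorem~\ref{thm:twist}: immediately after the statement it writes ``See \cite{yos-ch, yos-loc} for details and applications,'' deferring the argument entirely to those references. So there is no in-paper proof to compare against.

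Your approach---extracting a minimal CW model from the stratification of Theorem~\ref{thm:main}, then identifying the twisted cellular differentials with $\nabla$ via Fox calculus on the words $E(C)$ and a diagonal rescaling---is the natural one and is essentially what the cited references carry out. The bookkeeping you isolate is correct: the telescoping identity $\sum_{j\le k}d_j(C)=\tfrac12(-1-\operatorname{sgn}\alpha_k(C))\in\{-1,0\}$ follows from the definition of $d_j$, the monodromy $\rho(\eta_i)=\prod_{j\ge i}q_j$ is read off from $\eta_i=\gamma_i\gamma_{i+1}\cdots\gamma_n$ (equation~(\ref{eq:change})), and $\Sep(C_0,C_i)=\{H_i,\dots,H_n\}$ is immediate from the description of $C_0,C_i$ in \S\ref{subsec:numbering}.

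The one place where your sketch is not quite right is the justification of $K\simeq M(\scA)$. You write that ``since $M(\scA)$ has the homotopy type of a minimal $2$-complex, Theorem~\ref{thm:dualpres} upgrades to $K\simeq M(\scA)$''; but two minimal $2$-complexes with isomorphic fundamental groups need not be homotopy equivalent, so matching $\pi_1$ alone is insufficient. The correct route is to argue \emph{directly from the stratification}: Theorem~\ref{thm:main} exhibits $M(\scA)$ as filtered by closed submanifolds with contractible open strata, and one shows (as in \cite{yos-lef,yos-loc}) that this yields a genuine handle or CW decomposition of $M(\scA)$ whose attaching maps are precisely the transversal loops $\eta_i$ and the chamber meridians homotopic to $E(C)$. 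You flag this as a ``secondary point requiring care,'' which is fair, but the fix is geometric (build the CW structure from the stratification) rather than group-theoretic (compare two presentations). Once that is in place, your Fox-calculus identification goes through as described.
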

See \cite{yos-ch, yos-loc} for details and applications.

\section{Positive homogeneous presentations}
\label{sec:poshom}

\subsection{Left and right lines}

In this section, we give an alternative 
presentation for the fundamental group $\pi_1(M(\scA))$. 
It is presented with positive homogeneous relations as: 
\begin{equation*}
\begin{split}
\mbox{Generators}:&\ \gamma_1, \gamma_2, \dots, \gamma_n, \\
\mbox{Relations}, R(C):&\ \gamma_1\gamma_2\dots\gamma_n
=\gamma_{i_1(C)}\gamma_{i_2(C)}\dots\gamma_{i_n(C)}, 
\end{split}
\end{equation*}
where $C$ runs over all $\ch_2^\scF(\scA)$ and 
$(i_1(C), \dots, i_n(C))$ is a permutation 
of $(1, \dots, n)$ associated to 
$C$. 

\begin{definition}
Let $C\in\ch(\scA)$ be a chamber. 
The line $H_i\in\scA$ is said to be {\em passing 
the left side of $C$} if $C\subset\{\alpha_i>0\}$. 
Similarly, 
The line $H_i\in\scA$ is said to be {\em passing 
the right side of $C$} if $C\subset\{\alpha_i<0\}$. 
\end{definition}

\begin{remark}
Sometimes it is convenient to consider 
$0$-th line $H_0$ is passing the right side 
of $C$ for any chamber $C$. (Recall that 
$\alpha_0(C)=-1$ by our convention.) 
\end{remark}

\begin{definition}
For a chamber $C\in\ch(\scA)$, define the 
decomposition 
$\{1, \dots, n\}=I_R(C)\sqcup I_L(C)$ as follows. 
\begin{equation*}
\begin{split}
I_R(C)=&\{i\mid H_i\mbox{ passes the right side of }C\},\\
I_L(C)=&\{i\mid H_i\mbox{ passes the left side of }C\}. 
\end{split}
\end{equation*}
\end{definition}
The notion right/left is related to 
the map $d_i$. The proof of the next proposition 
is straightforward. 
\begin{proposition}
\label{prop:straightf}
Let $C\in\ch_2^\scF(\scA)$. 
\begin{itemize}
\item 
If 
$H_{i-1}$ is passing right side of $C$ and 
$H_{i}$ is passing left side of $C$, then 
$d_i(C)=-1$. 
\item 
If 
$H_{i-1}$ is passing left side of $C$ and 
$H_{i}$ is passing right side of $C$, then 
$d_i(C)=1$. 
\item 
Otherwise, $d_i(C)=0$. 
\end{itemize}
\end{proposition}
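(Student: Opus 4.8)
The plan is to simply unwind the two definitions and match them against each other case by case. The starting observation is that each defining form $\alpha_i$ has a constant sign on the chamber $C$: since $C$ is a connected component of $\bR^2\setminus\bigcup_{H\in\scA}H$, it is disjoint from $H_i=\{\alpha_i=0\}$ and connected, so $\alpha_i$ cannot change sign on $C$. Hence the symbol $\alpha_i(C)$ denotes a well-defined sign, and the conditions ``$H_i$ passes the right side of $C$'' and ``$H_i$ passes the left side of $C$'' are literally $\alpha_i(C)<0$ and $\alpha_i(C)>0$, respectively.

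Next I would substitute these sign conditions into the definition of $d_i(C)$. In the first bullet, ``$H_{i-1}$ right, $H_i$ left'' reads $\alpha_{i-1}(C)<0$ and $\alpha_i(C)>0$, i.e.\ $\alpha_{i-1}(C)<0<\alpha_i(C)$, which is exactly the defining condition for $d_i(C)=-1$. The second bullet is the mirror image: ``$H_{i-1}$ left, $H_i$ right'' reads $\alpha_{i-1}(C)>0>\alpha_i(C)$, giving $d_i(C)=+1$. In the two remaining configurations (both $H_{i-1}$ and $H_i$ on the same side of $C$) the forms $\alpha_{i-1}(C)$ and $\alpha_i(C)$ share a sign, which is precisely the ``otherwise'' branch yielding $d_i(C)=0$. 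Thus the claim is a finite check of the four sign patterns of the pair $(\alpha_{i-1}(C),\alpha_i(C))$.

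The only point that needs a separate word is the boundary index $i=1$, where the convention $\alpha_0\equiv -1$ is in force. Here $\alpha_0(C)=-1<0$ for every chamber, so $H_0$ is always counted as passing the right side of $C$, consistent with the remark following the definition of left and right lines. The case analysis then degenerates correctly: the ``$H_0$ left'' configurations never arise, so $d_1(C)$ equals $-1$ when $H_1$ passes on the left and $0$ when $H_1$ passes on the right, and never $+1$. I do not anticipate any genuine obstacle here: the statement is a direct dictionary between the sign data defining $d_i$ and the geometric right/left terminology, and the verification is entirely mechanical once the constant-sign observation is recorded.
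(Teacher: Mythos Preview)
Your proposal is correct and is exactly the direct definition-unwinding the paper has in mind; the paper itself gives no argument beyond the remark that the proof is ``straightforward.'' Your treatment of the $i=1$ boundary case via the convention $\alpha_0=-1$ is likewise consistent with the paper's conventions.
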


\subsection{Positive homogeneous relations}

For a chamber $C\in\ch_2^\scF(\scA)$, arranging 
the right/left indices increasingly as 
\begin{equation*}
\begin{split}
I_R(C)=&\{i_1(C)<i_2(C)<\dots< i_k(C)\},\\
I_L(C)=&\{i_{k+1}(C)<i_{k+2}(C)<\dots< i_n(C)\}. 
\end{split}
\end{equation*}
Then we introduce the following homogeneous relation. 
\begin{equation}
\label{eq:homog}
\Gamma(C): 
\gamma_1\gamma_2\dots\gamma_n=
\gamma_{i_1(C)}\gamma_{i_2(C)}\dots\gamma_{i_n(C)}. 
\end{equation}

\begin{theorem}
\label{thm:poshom}
With notation as above, the fundamental group 
$\pi_1(M(\scA), *)$ is isomorphic to the group 
defined by the presentation 
$$
\langle
\gamma_1, \dots, \gamma_n
\mid
\Gamma(C), C\in\ch_2^\scF(\scA)
\rangle. 
$$
\end{theorem}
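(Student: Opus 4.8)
The plan is to obtain the presentation of Theorem~\ref{thm:poshom} from the dual presentation of Theorem~\ref{thm:dualpres} by a single invertible change of generators (a Tietze transformation of the ambient free group), and then to rewrite each relator $E(C)$ as the homogeneous relator $\Gamma(C)$. First I would introduce the meridian generators by setting $\gamma_i:=\eta_i\eta_{i+1}^{-1}$ for $i=1,\dots,n$ (with the convention $\eta_{n+1}=1$), equivalently
\begin{equation*}
\eta_i=\gamma_i\gamma_{i+1}\cdots\gamma_n .
\end{equation*}
This substitution is unitriangular in the free generators, hence invertible, so it carries $\langle\eta_1,\dots,\eta_n\mid E(C)\rangle$ to an isomorphic group on $\gamma_1,\dots,\gamma_n$. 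Geometrically $\gamma_i$ is a meridian of $H_i$: realizing the transversal loops inside $\scF^1_\bC$ as in Figure~\ref{fig:transvgen}, one has $\scF^1_\bC\cap M(\scA)=\bC\setminus\{a_1,\dots,a_n\}$, the sail $S_i$ meets $\scF^1_\bC$ in the real segment $(a_i,a_{i-1})$, and a loop crossing this segment once positively encircles exactly the punctures $a_i,a_{i+1},\dots,a_n$ lying to its left; the orientation rules of \S\ref{subsec:ori} (cf.\ Example~\ref{ex:int}) fix the product order and yield the formula above, so $\gamma_i=\eta_i\eta_{i+1}^{-1}$ is the meridian bounded by consecutive walls.

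Next I would rewrite the relators. By Theorem~\ref{thm:dualpres}, and since the ``barred'' factor in $E(C)$ reuses the same order with opposite exponents, the relation $E(C)=1$ is equivalent to
\begin{equation*}
\eta_n^{d_n(C)}\eta_{n-1}^{d_{n-1}(C)}\cdots\eta_1^{d_1(C)}
=\eta_1^{d_1(C)}\eta_2^{d_2(C)}\cdots\eta_n^{d_n(C)},
\end{equation*}
that is, the product of the $\eta_i^{d_i(C)}$ taken in decreasing order of $i$ equals the product taken in increasing order. By Proposition~\ref{prop:straightf} the exponents $d_i(C)$ are governed by the left/right pattern $\ell_0=R,\ell_1,\dots,\ell_n$ of $C$ (with $\ell_i=R$ for $i\in I_R(C)$ and $\ell_i=L$ for $i\in I_L(C)$): $d_i(C)$ is nonzero precisely at the switches $s_1<\dots<s_m$ of this pattern, and since it starts at $\ell_0=R$ and alternates one has $d_{s_j}(C)=(-1)^j$. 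Writing $\eta_i=g_{i-1}^{-1}g_n$ with $g_i:=\gamma_1\gamma_2\cdots\gamma_i$ and $g_n=\gamma_1\cdots\gamma_n$, I would substitute into both sides; the factors $g_n^{\pm1}$ cancel in consecutive pairs, so each side telescopes into an alternating word in the partial products $g_{s_j-1}$.

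The crux is the combinatorial verification that this telescoped identity is equivalent to $\Gamma(C)$, namely $g_n=\gamma_{i_1(C)}\cdots\gamma_{i_n(C)}$ with $I_R(C)$ listed before $I_L(C)$, each increasingly. I would carry this out by induction on the number $m$ of switches (equivalently on the number of maximal $L$-blocks of the pattern), matching one telescoping cancellation to the move of one $L$-block past the $R$-block to its right that defines the stable sort in $\Gamma(C)$. The two parity cases must be handled separately: $m$ even corresponds to $n\in I_R(C)$ (the telescoped words carry a trailing $g_n^{\pm1}$), while $m$ odd corresponds to $n\in I_L(C)$; in both the boundary convention $\alpha_0=-1$, i.e.\ $\ell_0=R$, is what anchors the alternation $d_{s_j}(C)=(-1)^j$. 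I expect this bookkeeping---reconciling the order reversal of the $\eta$-relation with the stable sort underlying $\Gamma(C)$ across both parities---to be the main obstacle. Once it is settled, the invertible substitution exhibits the two presentations as Tietze-equivalent, so they define the same group, which is $\pi_1(M(\scA),*)$ by Theorem~\ref{thm:dualpres}, completing the proof.
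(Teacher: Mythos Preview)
Your proposal is correct and follows essentially the same approach as the paper: both derive Theorem~\ref{thm:poshom} from Theorem~\ref{thm:dualpres} via the change of generators $\eta_i=\gamma_i\gamma_{i+1}\cdots\gamma_n$ (the paper's equation~(\ref{eq:change})) and then verify algebraically that $E(C)=1$ is equivalent to $\Gamma(C)$ (the paper's Proposition~\ref{prop:equiv}). The only difference is organizational: the paper does the verification by a four-way case split on whether $H_1$ and $H_n$ pass to the right or left of $C$ and multiplies $E(C)$ on the left by $\gamma_1\cdots\gamma_n$, whereas you first rewrite $E(C)=1$ as ``decreasing product $=$ increasing product'', telescope via $\eta_i=g_{i-1}^{-1}g_n$, and split by parity of the number of switches---these are the same computation in slightly different bookkeeping.
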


\begin{remark}
Note that all relations in the above presentation 
are positive homogeneous. It is similar to the 
``conjugation-free geometric presentation'' introduced 
in \cite{egt1, egt2}. However they require stronger 
properties on relations. Indeed they prove that 
the fundamental group of 
Ceva arrangement (Figure \ref{fig:6lines}) 
does not have conjugation-free geometric presentation. 
\end{remark}

\begin{example}
\label{ex:5lines2}
Let $\scA=\{H_1, \dots, H_5\}$ be 
a line arrangement and $\scF$ be a 
flag pictured in Figure \ref{fig:5lines}. 
$$
\begin{array}{c|c|c}
&I_R(C)&I_L(C)\\
\hline
C_6&124&35\\
C_7&14&235\\
C_8&134&25\\
C_9&1345&2\\
C_{10}&345&12\\
C_{11}&45&123\\
C_{12}&4&1235
\end{array}
$$
Hence the fundamental group has the following 
presentation. 
\begin{equation*}
\begin{split}
\pi_1(M(\scA),*)\simeq
\langle\gamma_1, \dots, \gamma_5\mid&
12345=12435=14235=13425\\
&=13452=34512=45123=41235\rangle. 
\end{split}
\end{equation*}
Here we denote $12345$ instead of 
$\gamma_1\gamma_2\gamma_3\gamma_4\gamma_5$ for 
simplicity. 
\end{example}

\begin{example}
\label{ex:6lines2}
Let $\scA=\{H_1, \dots, H_6\}$ be 
a line arrangement and $\scF$ be a 
flag pictured in Figure \ref{fig:6lines}. 
$$
\begin{array}{c|c|c}
&I_R(C)&I_L(C)\\
\hline
C_7&23&1456\\
C_8&234&156\\
C_9&2346&15\\
C_{10}&346&125\\
C_{11}&36&1245\\
C_{12}&3&12456\\
C_{13}&23456&1\\
C_{14}&3456&12\\
C_{15}&456&123\\
C_{16}&56&1234\\
C_{17}&6&12345
\end{array}
$$
Hence the fundamental group has the following 
presentation. 
\begin{equation*}
\begin{split}
\pi_1(M(\scA),*)\simeq
\langle\gamma_1, \dots, \gamma_6\mid&
123456\\
&=231456=234156=234615=346125=361245=312456\\
&=234561=345612=456123=561234=612345
\rangle. 
\end{split}
\end{equation*}
\end{example}

\subsection{Proof of Theorem \ref{thm:poshom}}

The new presentation in Theorem \ref{thm:poshom} is 
obtained by changing generators as 
$\eta_i=\gamma_i\gamma_{i+1}\dots\gamma_n$, or equivalently, 
\begin{equation}
\label{eq:change}
\begin{split}
\gamma_1=&\eta_1\eta_2^{-1}\\
\gamma_2=&\eta_2\eta_3^{-1}\\
\dots&\\
\gamma_{n-1}=&\eta_{n-1}\eta_n^{-1}\\
\gamma_n=&\eta_n. 
\end{split}
\end{equation}

\begin{remark}
If we fix the base point in $\scF^1_\bC=\scF^1\otimes\bC$, 
then we may choose meridian generators 
$\gamma_1, \dots, \gamma_n$ as in 
Figure \ref{fig:gamma}. 
(Compare Figure \ref{fig:transvgen}.)
\begin{figure}[htbp]
\begin{picture}(100,100)(20,0)
\thicklines

\put(30,85){$U\cap\scF^1_\bC$}

\put(100,50){\circle{4}}
\put(80,55){$H_3\cap\scF^1$}
\put(220,50){\circle{4}}
\put(200,55){$H_2\cap\scF^1$}
\put(340,50){\circle{4}}
\put(320,55){$H_1\cap\scF^1$}

\put(102,50){\line(1,0){116}}
\put(222,50){\line(1,0){116}}
\put(342,50){\line(1,0){60}}

\put(50,10){$*$}

\qbezier(57,14)(130,30)(130,50)
\qbezier(130,50)(130,75)(100,75)
\qbezier(100,75)(70,75)(55,16)
\thicklines
\put(128,62){\vector(-1,4){0}}
\put(135,57){$\gamma_3$}

\qbezier(57,14)(260,20)(260,50)
\qbezier(260,50)(260,80)(190,80)
\qbezier(190,80)(170,80)(170,50)
\qbezier(170,50)(170,21)(160,19.5)
\thicklines
\put(257.99,59){\vector(-1,3){0}}
\put(265.5,58.5){$\gamma_2$}

\qbezier(57,14)(370,10)(370,50)
\qbezier(370,50)(370,90)(320,90)
\qbezier(320,90)(300,90)(300,50)
\qbezier(300,50)(300,22)(290,21)
\thicklines
\put(373,57){$\gamma_1$}
\put(368.5,62){\vector(-1,4){0}}

\end{picture}
     \caption{Meridian generators $\gamma_1, \gamma_2, \gamma_3$.}
\label{fig:gamma}
\end{figure}
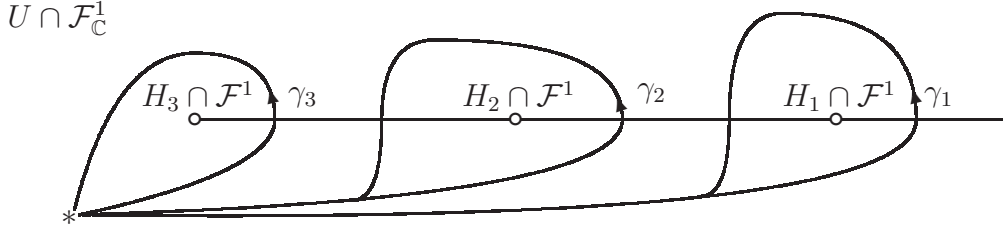

\end{remark}

\begin{proposition}
\label{prop:equiv}
By the change (\ref{eq:change}), 
the relation $E(C)=1$ (Eq. (\ref{eq:releta})) 
is equivalent to $\Gamma(C)$ (Eq. (\ref{eq:homog})). 
\end{proposition}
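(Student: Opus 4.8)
The plan is to perform the substitution (\ref{eq:change}) and show that, inside the free group $F\langle\eta_1,\dots,\eta_n\rangle=F\langle\gamma_1,\dots,\gamma_n\rangle$, both $E(C)=1$ and $\Gamma(C)$ collapse to one and the same equation. First I would rewrite $E(C)$. Writing $P_{\downarrow}:=\eta_n^{d_n}\eta_{n-1}^{d_{n-1}}\cdots\eta_1^{d_1}$ and $P_{\uparrow}:=\eta_1^{d_1}\eta_2^{d_2}\cdots\eta_n^{d_n}$ (with $d_i=d_i(C)$), the definition (\ref{eq:releta}) reads $E(C)=P_{\downarrow}\cdot P_{\uparrow}^{-1}$, because the second half of $E(C)$ is exactly $\eta_n^{-d_n}\cdots\eta_1^{-d_1}=P_{\uparrow}^{-1}$. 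Hence $E(C)=1$ is equivalent to the single equation $P_{\downarrow}=P_{\uparrow}$.

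Next I would bring $\Gamma(C)$ into the same shape. Telescoping (\ref{eq:change}) gives $\gamma_1\gamma_2\cdots\gamma_n=\eta_1$ (with the convention $\eta_{n+1}=1$), so the left-hand side of $\Gamma(C)$ equals $\eta_1$. The heart of the argument is to evaluate the two factors on the right-hand side. Set $\ell_i=1$ if $i\in I_L(C)$ and $\ell_i=0$ if $i\in I_R(C)$, with $\ell_0=0$ (consistent with the convention $\alpha_0=-1$). By Proposition \ref{prop:straightf} one has $d_i(C)=\ell_{i-1}-\ell_i$, so the indices $i$ with $d_i(C)\neq0$ are precisely the boundaries between the maximal runs of constant side in the sequence $H_0,H_1,\dots,H_n$, and $I_R(C),I_L(C)$ are the unions of the even- and odd-indexed runs respectively. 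Each such run is an interval, so the increasingly ordered product of the $\gamma_i$ over a run with smallest index $a$ and largest index $b$ telescopes to $\eta_a\eta_{b+1}^{-1}$. Concatenating these over the runs, I expect to obtain the two clean identities
$$
\prod_{i\in I_R(C)}\gamma_i=\eta_1\cdot P_{\uparrow},
\qquad
\prod_{i\in I_L(C)}\gamma_i=P_{\downarrow}^{-1},
$$
where the products are taken in increasing order of index. Granting these, $\Gamma(C)$ becomes $\eta_1=\eta_1\,P_{\uparrow}\,P_{\downarrow}^{-1}$, i.e.\ $P_{\uparrow}=P_{\downarrow}$, which is exactly the reduced form of $E(C)=1$ found above; this yields the equivalence.

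The main obstacle is the bookkeeping in the middle step: verifying the two telescoping identities uniformly. Two points need care. First, the alternating signs of the transitions: the first transition out of the run containing $H_0$ is an $R$-to-$L$ crossing, giving $d=-1$, after which the signs alternate, so that $P_{\uparrow}=\eta_{j_0}^{-1}\eta_{j_1}^{+1}\eta_{j_2}^{-1}\cdots$ (where $j_0<j_1<\cdots$ are the indices with $d_i(C)\neq0$) matches the telescoped $I_R$-product after stripping the leading $\eta_1$, and similarly $P_{\downarrow}^{-1}$ matches the telescoped $I_L$-product. Second, the two boundary conventions $\ell_0=0$ and $\eta_{n+1}=1$ must be used to absorb the first and last runs correctly; here I would check both parities of the number of runs (whether the final run is an $R$- or an $L$-run) to confirm that the terminal factor $\eta_{n+1}=1$ drops out as claimed. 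Once these are pinned down the identities follow purely formally, and the proposition is immediate.
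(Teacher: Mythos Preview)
Your approach is correct and essentially the same as the paper's: both arguments identify the maximal runs of ``right'' and ``left'' indices, telescope each run $\gamma_a\gamma_{a+1}\cdots\gamma_b=\eta_a\eta_{b+1}^{-1}$, and match the resulting alternating $\eta$-product with the nonzero $d_i(C)$'s at the run boundaries. The paper carries this out by an explicit four-case analysis on the sides of $H_1$ and $H_n$ (working out Case (1) in detail), whereas your indicator $\ell_i$ with $d_i=\ell_{i-1}-\ell_i$ and the two uniform identities $\prod_{I_R(C)}\gamma_i=\eta_1P_{\uparrow}$, $\prod_{I_L(C)}\gamma_i=P_{\downarrow}^{-1}$ package the same computation more compactly; the boundary checks you flag (parity of the number of runs, the conventions $\ell_0=0$ and $\eta_{n+1}=1$) correspond exactly to the paper's four cases.
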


\begin{proof}
We distinguish four cases according to 
$H_1$ and $H_n$ are passing right/left of $C$. 

Case (1). 
Both $H_1$ and $H_n$ are passing right 
side of $C$. 

Case (2). 
$H_1$ is passing right and $H_n$ is passing left 
side of $C$. 

Case (3). 
Both $H_1$ and $H_n$ are passing left 
side of $C$. 

Case (4). 
$H_1$ is passing left and $H_n$ is passing right 
side of $C$. 

\noindent
Case (1). We may take 
$1<i_1<\dots<i_{2k}<n$ in such a way that 
$$
\stackrel{\mbox{\footnotesize right}}
{\overbrace{1, 2, \dots}}\ , 
\stackrel{\mbox{\footnotesize left}}
{\overbrace{i_1, i_1+1, \dots}}\  , 
\stackrel{\mbox{\footnotesize right}}
{\overbrace{i_2, i_2+1, \dots}}\  , 
\dots\  , 
\stackrel{\mbox{\footnotesize right}}
{\overbrace{i_{2k}, i_{2k}+1, \dots, n}}. 
$$
In this case we have 
\begin{equation*}
\begin{split}
I_R(C)=&
\{1, 2, \dots, i_1-1, i_2, i_2+1,\dots, i_3-1, \dots, 
i_{2k}, i_{2k}+1\dots, n\},\\
I_L(C)=&
\{i_1, i_1+1, \dots, i_2-1, i_3, \dots, i_4-1, \dots, 
i_{2k-1}, i_{2k-1}+1, \dots, i_{2k}-1\}.
\end{split}
\end{equation*}
Then by Proposition \ref{prop:straightf}, 
$
d_{i_{2g-1}}(C)=-1, 
d_{i_{2g}}(C)=1$ ($g=1, \dots, k$) 
and otherwise, 
$d_{i}(C)=0$. Hence the word $E(C)$ is equal to 
$$
E(C)=
\eta_{i_1}^{-1}
\eta_{i_2}^{1}
\eta_{i_3}^{-1}
\dots
\eta_{i_{2k}}^{1}
\cdot
\eta_{i_1}^{1}
\eta_{i_2}^{-1}
\eta_{i_3}^{1}
\dots
\eta_{i_{2k}}^{-1}. 
$$
Using (\ref{eq:change}), we have 
\begin{equation*}
\begin{split}
E(C)=&
\eta_{i_1}^{-1}
(\eta_{i_2}\eta_{i_3}^{-1})
\dots
(\eta_{i_{2k-2}}\eta_{i_{2k-1}}^{-1})
\eta_{i_{2k}}
\cdot
(\eta_{i_1}\eta_{i_2}^{-1})
\dots
(\eta_{i_{2k-1}}\eta_{i_{2k}}^{-1})
\\
=&
\eta_{i_1}^{-1}
\cdot
(\gamma_{i_2}\dots\gamma_{i_3-1})\dots
(\gamma_{i_{2k-2}}\dots\gamma_{i_{2k-1}-1})
\cdot
(\gamma_{i_{2k}}\dots\gamma_n)\\
&
\cdot
(\gamma_{i_1}\dots\gamma_{i_2-1})\dots
(\gamma_{i_{2k-1}}\dots\gamma_{i_{2k}-1}). 
\end{split}
\end{equation*}
Since the equality $E(C)=e$ holds, by multiplying 
$\gamma_1\gamma_2\dots\gamma_n$ from the left, we have 
(note that $\gamma_1\gamma_2\dots\gamma_n\eta_{i_1}^{-1}
=\gamma_1\gamma_2\dots\gamma_{i_1-1}$) 
\begin{equation*}
\begin{split}
\gamma_1\gamma_2\dots\gamma_n=&
(\gamma_1\gamma_2\dots\gamma_{i_1-1})
(\gamma_{i_2}\dots\gamma_{i_3-1})
\dots
(\gamma_{i_{2k}}\dots\gamma_n)\\
&\cdot
(\gamma_{i_1}\gamma_{i_1+1}\dots\gamma_{i_2-1})
(\gamma_{i_3}\dots\gamma_{i_4-1})
\dots
(\gamma_{i_{2k-1}}\dots\gamma_{i_{2k}-1}), 
\end{split}
\end{equation*}
which is identical to the relation 
$\Gamma(C)$. 

The remaining cases (2), (3) and (4) are 
handled in the same way. 
\end{proof}

\section{Proofs of main results}
\label{sec:mainproof}

In this section, we prove 
Theorem \ref{thm:main}. 
For this purposes, it is convenient 
to describe $M(\scA)$ in terms of 
tangent bundle of $\bR^2$. 

\subsection{Tangent bundle description}

We identify $\bC^2$ with the total space 
$T\bR^2$ of the tangent bundle of $\bR^2$ 
via 
\begin{equation*}
\begin{split}
T\bR^2\longrightarrow&\bC^2\\
(\bm{x}, \bm{y})\longmapsto&
\bm{x}+\ii\bm{y}, 
\end{split}
\end{equation*}
where $\bm{y}\in T_{\bm{x}}\bR^2$ is a tangent 
vector of $\bR^2$ at $\bm{x}\in\bR^2$. 
Let $H\subset\bR^2$ be a line and $H_\bC\subset\bC^2$ be 
its complexification. 
Then $H_\bC$ is identified by the above map 
with 
\begin{equation}
\label{eq:tangentline}
H_\bC\simeq
\{(\bm{y}\in T_{\bm{x}}\bR^2)\mid 
\bm{x}\in H, \bm{y}\in T_{\bm{x}}H\}. 
\end{equation}
For $\bm{x}\in\bR^2$, write $\scA_{\bm{x}}$ 
the set of lines passing through $\bm{x}$. 
Then we have the following 
(see \cite[\S3.1]{yos-lef}.): 
$$
M(\scA)\simeq
\{(\bm{y}\in T_{\bm{x}}\bR^2)\mid 
\bm{x}\in\bR^2, 
\bm{y}\notin T_{\bm{x}}H, 
\mbox{ for }H\in\scA_{\bm{x}}\}. 
$$
It is straightforward to check the 
following from 
(\ref{eq:tangentline}). 
\begin{lemma}
\label{lem:unifmot}
If $\bm{x}+\ii\bm{y}\in M(\scA)$, then 
$(\bm{x}+t\bm{y})+\ii\bm{y}\in M(\scA)$ 
for any $t\in\bR$. 
\end{lemma}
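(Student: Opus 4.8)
The plan is to argue directly from the defining linear forms, exploiting the fact that the decomposition into real and imaginary parts separates the affine term from the homogeneous term. For each line $H\in\scA$ with defining form $\alpha_H(\bm{z})=az_1+bz_2+c$, write $\ell_H(\bm{y}):=ay_1+by_2$ for its homogeneous linear part. Plugging in $\bm{z}=\bm{x}+\ii\bm{y}$ and using that $a,b,c$ are real gives the clean splitting
$$
\alpha_H(\bm{x}+\ii\bm{y})=\alpha_H(\bm{x})+\ii\,\ell_H(\bm{y}),
$$
so that $\bm{x}+\ii\bm{y}\in H_\bC$ if and only if $\alpha_H(\bm{x})=0$ and $\ell_H(\bm{y})=0$ simultaneously. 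This recovers the description $(\ref{eq:tangentline})$: $\bm{x}\in H$ and $\bm{y}\in T_{\bm{x}}H=\Ker\ell_H$.

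First I would compute the effect of the substitution $\bm{x}\mapsto\bm{x}+t\bm{y}$. Since $\alpha_H$ is affine with linear part $\ell_H$, I have $\alpha_H(\bm{x}+t\bm{y})=\alpha_H(\bm{x})+t\,\ell_H(\bm{y})$, while the imaginary part is untouched:
$$
\alpha_H\bigl((\bm{x}+t\bm{y})+\ii\bm{y}\bigr)
=\bigl(\alpha_H(\bm{x})+t\,\ell_H(\bm{y})\bigr)+\ii\,\ell_H(\bm{y}).
$$
The key observation is that membership $(\bm{x}+t\bm{y})+\ii\bm{y}\in H_\bC$ forces the imaginary part to vanish, i.e. $\ell_H(\bm{y})=0$; but then the real part collapses to $\alpha_H(\bm{x})$, and the two conditions become exactly $\alpha_H(\bm{x})=0$ and $\ell_H(\bm{y})=0$. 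Hence $(\bm{x}+t\bm{y})+\ii\bm{y}\in H_\bC$ if and only if $\bm{x}+\ii\bm{y}\in H_\bC$, for every $H$ and every $t\in\bR$. Since the hypothesis $\bm{x}+\ii\bm{y}\in M(\scA)$ says the latter fails for all $H$, the former fails for all $H$, giving $(\bm{x}+t\bm{y})+\ii\bm{y}\in M(\scA)$.

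There is no real obstacle here; the statement is an immediate consequence of the degree $\le 1$ hypothesis on the forms. If one prefers the geometric phrasing via the tangent-bundle model, the same argument reads: moving $\bm{x}$ along the fixed direction $\bm{y}$ leaves $\bm{y}$ unchanged, so the only lines whose incidence status could change are those $H$ with $\bm{y}\in T_{\bm{x}}H$ (i.e.\ $\bm{y}$ parallel to $H$); for such $H$ the flow $t\mapsto\bm{x}+t\bm{y}$ stays on $H$ or off $H$ according to whether $\bm{x}\in H$, and the hypothesis guarantees $\bm{x}\notin H$ for every such parallel line. The one point worth stating explicitly is that lines transverse to $\bm{y}$ never cause trouble, because $\bm{y}\notin T_{\bm{x}'}H=T_{\bm{x}}H$ regardless of whether $\bm{x}+t\bm{y}$ happens to land on $H$.
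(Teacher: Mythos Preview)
Your proof is correct and is exactly the ``straightforward check'' the paper alludes to: the paper gives no explicit proof but simply states the lemma follows from the tangent-bundle description (\ref{eq:tangentline}), and your computation $\alpha_H((\bm{x}+t\bm{y})+\ii\bm{y})=(\alpha_H(\bm{x})+t\,\ell_H(\bm{y}))+\ii\,\ell_H(\bm{y})$ is precisely that verification.
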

Thus lines and the complement $M(\scA)$ are 
preserved under the linear uniform motion. 
The next lemma shows that the sail $S(\alpha, \beta)$ 
is also preserved under the linear uniform motion. 
The next lemma will be used repeatedly to construct 
deformation retractions for certain subsets of $M(\scA)$. 

\begin{lemma}
\label{lem:unifmotsail}
Let $\alpha, \beta$ be linear forms 
(as in Definition \ref{def:sail}). 
Suppose $\bm{x}+\ii\bm{y}\in S(\alpha, \beta)$. 
Then $(\bm{x}+t\bm{y})+\ii\bm{y}\in S(\alpha, \beta)$ 
for any $t\in\bR$. 
Conversely, if $\bm{x}+\ii\bm{y}\notin S(\alpha, \beta)$, 
then $(\bm{x}+t\bm{y})+\ii\bm{y}\notin S(\alpha, \beta)$ 
for any $t\in\bR$. 
\end{lemma}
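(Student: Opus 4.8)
The plan is to reduce the lemma to a single computation describing how a real affine-linear form transforms under the motion $\Phi_t\colon \bm{x}+\ii\bm{y}\mapsto(\bm{x}+t\bm{y})+\ii\bm{y}$, and then to read off both assertions. Throughout I use that the forms $\alpha,\beta$ defining the sails $S_i$ in Theorem \ref{thm:main} have \emph{real} coefficients (recall $\alpha_0=-1$ and each $\alpha_i$ is the real defining form of $H_i$); this real-coefficient property is exactly what the argument exploits, and for genuinely complex coefficients the conclusion would in fact fail.

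First I would record the key identity. Writing $z=\bm{x}+\ii\bm{y}$ with $\bm{x},\bm{y}\in\bR^2$ and $\alpha=\ell_\alpha+c$ with $\ell_\alpha$ the real linear part, the real-linearity of $\ell_\alpha$ gives $\alpha(z)=\alpha(\bm{x})+\ii\,\ell_\alpha(\bm{y})$, so that $\Re\alpha(z)=\alpha(\bm{x})$ and $\Im\alpha(z)=\ell_\alpha(\bm{y})$. Applying $\alpha$ to $z_t:=\Phi_t(z)=(\bm{x}+t\bm{y})+\ii\bm{y}$ and using affineness yields
\begin{equation*}
\alpha(z_t)=\alpha(\bm{x}+t\bm{y})+\ii\,\ell_\alpha(\bm{y})=\alpha(z)+t\,\Im\alpha(z),
\end{equation*}
and likewise $\beta(z_t)=\beta(z)+t\,\Im\beta(z)$. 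Note in particular that $\Im\alpha(z_t)=\Im\alpha(z)$ is \emph{constant} in $t$.

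Next I would prove the positive assertion. Suppose $z\in S(\alpha,\beta)$, i.e.\ $\alpha(z)/\beta(z)=\lambda\in\bR_{<0}$, so $\alpha(z)=\lambda\beta(z)$ and, comparing imaginary parts (here $\lambda$ is real), $\Im\alpha(z)=\lambda\,\Im\beta(z)$. Substituting into the identity above gives
\begin{equation*}
\alpha(z_t)=\lambda\beta(z)+t\lambda\,\Im\beta(z)=\lambda\bigl(\beta(z)+t\,\Im\beta(z)\bigr)=\lambda\,\beta(z_t).
\end{equation*}
It then remains to check $\beta(z_t)\neq 0$ for every $t$, so that the ratio stays defined: since $\Im\beta(z_t)=\Im\beta(z)$ is constant, either $\Im\beta(z)\neq 0$ (whence $\beta(z_t)\neq 0$ for all $t$) or $\Im\beta(z)=0$ (whence $\beta(z_t)=\beta(z)\neq 0$ is constant). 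Either way $\beta(z_t)\neq 0$, hence $\alpha(z_t)=\lambda\beta(z_t)\neq 0$ and $\alpha(z_t)/\beta(z_t)=\lambda\in\bR_{<0}$, so $z_t\in S(\alpha,\beta)$. I expect this verification that the denominator cannot vanish anywhere along the line to be the only genuinely delicate point.

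Finally, the converse follows formally. Because $\Im(\Phi_t z)=\bm{y}=\Im z$, the motions compose as $\Phi_s\circ\Phi_t=\Phi_{s+t}$, so each $\Phi_t$ is invertible with inverse $\Phi_{-t}$. If some $z_t=\Phi_t(z)$ lay in $S(\alpha,\beta)$, then applying the already-proved positive assertion to $\Phi_{-t}$ would force $z=\Phi_{-t}(z_t)\in S(\alpha,\beta)$; contrapositively, $z\notin S(\alpha,\beta)$ implies $z_t\notin S(\alpha,\beta)$ for all $t$, as claimed.
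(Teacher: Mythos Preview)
Your proof is correct and follows essentially the same route as the paper's: write the affine forms in real and imaginary parts, observe that $\alpha(z)=\lambda\beta(z)$ with $\lambda$ real forces the same relation on real and imaginary parts separately, and conclude $\alpha(z_t)=\lambda\beta(z_t)$; the converse is deduced from the group law $\Phi_s\circ\Phi_t=\Phi_{s+t}$, which the paper phrases simply as ``follows immediately from the first part.'' You are in fact slightly more careful than the paper in checking that $\beta(z_t)\neq 0$ along the whole line and in flagging the implicit real-coefficient hypothesis (Definition~\ref{def:sail} nominally allows complex coefficients, but the paper's proof, like yours, tacitly uses reality).
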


\begin{proof}
Set 
$\alpha(\bm{x})=\bm{a}\cdot\bm{x}+b$ and 
$\beta(\bm{x})=\bm{c}\cdot\bm{x}+d$, 
where $\bm{a, c}\in(\bR^2)^*$ and $b, d\in\bR$. 
By assumption, 
$$
\frac{\alpha(\bm{x}+\ii\bm{y})}{\beta(\bm{x}+\ii\bm{y})}
=
\frac{\bm{a}\cdot\bm{x}+\ii\bm{a}\cdot\bm{y}+b}{\bm{c}\cdot\bm{x}+\ii\bm{c}\cdot\bm{y}+d}
=
\frac{\alpha(\bm{x})+\ii\bm{a}\cdot\bm{y}}{\beta(\bm{x})+\ii\bm{c}\cdot\bm{y}}
=r\in\bR_{<0}. 
$$
Hence
\begin{equation}
\label{eq:descriptsail}
\alpha(\bm{x})=r\beta(\bm{x})
\mbox{ and }
\bm{a}\cdot\bm{y}=r\bm{c}\cdot\bm{y}.
\end{equation}
The assertion follows from 
\begin{equation}
\label{eq:motion}
\frac{\alpha(\bm{x}+t\bm{y}+\ii\bm{y})}{\beta(\bm{x}+t\bm{y}+\ii\bm{y})}
=
\frac{\alpha(\bm{x})+t\bm{a}\cdot\bm{y}+\ii\bm{a}\cdot\bm{y}}{\beta(\bm{x})+t\bm{c}\cdot\bm{y}+\ii\bm{c}\cdot\bm{y}}
=r.
\end{equation}
The second part follows immediately from the 
first part. 
\end{proof}
Suppose that $\bm{x}+\ii\bm{y}\in S(\alpha, \beta)$ 
and $\bm{a}\cdot\bm{y}\neq 0$. 
Set $t=-\frac{\alpha(\bm{x})}{\bm{a}\cdot\bm{y}}$. 
Then by (\ref{eq:motion}) above, 
$\alpha(\bm{x})+t\bm{a}\cdot\bm{y}=\alpha(\bm{x}+t\bm{y})=0$ and 
$\beta(\bm{x})+t\bm{c}\cdot\bm{y}=\beta(\bm{x}+t\bm{y})=0$, 
which implies that the line $\bm{x}+\bR\cdot\bm{y}$ 
is passing through the intersection $H_\alpha\cap H_\beta$ 
of two lines $H_\alpha=\{\alpha=0\}$ and 
$H_\beta=\{\beta=0\}$. 
We obtain the following description of the sail. 

\begin{proposition}
\label{prop:sails}
Let $\alpha$ and $\beta$ be as in Lemma \ref{lem:unifmotsail}. \\
\begin{itemize}
\item[(i)] Suppose $H_\alpha=\{\alpha=0\}$ and $H_\beta=\{\beta=0\}$ are 
not parallel. 
Then $\bm{x}+\ii\bm{y}\in S(\alpha, \beta)$ if 
and only if either 
\begin{itemize}
\item $\alpha(\bm{x})\beta(\bm{x})<0$ and 
$\bm{y}$ is tangent to the line 
$\overline{\bm{x}\cdot(H_\alpha\cap H_\beta)}$ passing 
through $\bm{x}$ and the intersection $H_\alpha\cap H_\beta$, 
or
\item $\alpha(\bm{x})=\beta(\bm{x})=0$ (i.e., $\{\bm{x}\}=
H_\alpha\cap H_\beta$) and $\bm{y}\neq 0$ such that 
the line $\bm{x}+\bR\cdot\bm{y}$ is passing through 
the domain $\{\bm{x}\in\bR^2\mid \alpha(\bm{x})\beta(\bm{x})<0\}$. 
\end{itemize}
\item[(ii)] Suppose $H_\alpha$ and $H_\beta$ are parallel. 
Then $\bm{x}+\ii\bm{y}\in S(\alpha, \beta)$ if 
and only if $\alpha(\bm{x})\beta(\bm{x})<0$ and 
$\bm{y}$ is either zero or parallel to $H_\alpha$. \\
\item[(iii)] Suppose $\alpha$ is a nonzero constant. (In this 
case, $\beta$ should be degree one.) 
Then $\bm{x}+\ii\bm{y}\in S(\alpha, \beta)$ if 
and only if $\alpha(\bm{x})\beta(\bm{x})<0$ and 
$\bm{y}$ is either zero or parallel to $H_\beta$. 
\end{itemize}
(See Figure \ref{fig:descriptsails}.)
\end{proposition}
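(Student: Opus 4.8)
The plan is to reduce all three cases to a single algebraic criterion, which is essentially the computation already performed in the derivation of Lemma~\ref{lem:unifmotsail}, now read as a biconditional. Writing $\alpha(\bm{x})=\bm{a}\cdot\bm{x}+b$ and $\beta(\bm{x})=\bm{c}\cdot\bm{x}+d$ and substituting $z=\bm{x}+\ii\bm{y}$, one has $\alpha(z)=\alpha(\bm{x})+\ii(\bm{a}\cdot\bm{y})$ and $\beta(z)=\beta(\bm{x})+\ii(\bm{c}\cdot\bm{y})$. Hence $\bm{x}+\ii\bm{y}\in S(\alpha,\beta)$ exactly when $\beta(z)\neq 0$ and $\alpha(z)=r\beta(z)$ for some real $r<0$; comparing real and imaginary parts, this is equivalent to the system
\[
\alpha(\bm{x})=r\beta(\bm{x}),\qquad \bm{a}\cdot\bm{y}=r(\bm{c}\cdot\bm{y}),\qquad r<0.
\]
I would state this master criterion first and then feed it the three geometric hypotheses; everything afterward is a direct reading of it.

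For (i), assuming $\bm{a},\bm{c}$ independent, I would split on whether $\bm{x}$ lies on both lines. When $\alpha(\bm{x})\beta(\bm{x})<0$ the value $r=\alpha(\bm{x})/\beta(\bm{x})$ is forced and negative, and the imaginary equation becomes the single linear condition $(\bm{a}-r\bm{c})\cdot\bm{y}=0$, cutting out a one-dimensional space of directions. I would identify it with the direction $q-\bm{x}$, where $\{q\}=H_\alpha\cap H_\beta$, using $\bm{a}\cdot(q-\bm{x})=-\alpha(\bm{x})$ and $\bm{c}\cdot(q-\bm{x})=-\beta(\bm{x})$, so that $(\bm{a}-r\bm{c})\cdot(q-\bm{x})=-\alpha(\bm{x})+r\beta(\bm{x})=0$. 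When instead $\bm{x}=q$ (so $\alpha(\bm{x})=\beta(\bm{x})=0$) the real equation is vacuous; restricting $\alpha,\beta$ to the line $q+\bR\bm{y}$ gives $\alpha\beta=s^2(\bm{a}\cdot\bm{y})(\bm{c}\cdot\bm{y})$, so the surviving requirement $r=(\bm{a}\cdot\bm{y})/(\bm{c}\cdot\bm{y})<0$ is precisely that this line meets the domain $\{\alpha\beta<0\}$.

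For (ii) and (iii) I expect the work to be bookkeeping with the same criterion. In the parallel case write $\bm{c}=\lambda\bm{a}$ with $\lambda\in\bR^*$; the imaginary equation reads $(1-r\lambda)(\bm{a}\cdot\bm{y})=0$, so either $\bm{a}\cdot\bm{y}=0$ (that is, $\bm{y}$ is zero or tangent to $H_\alpha$) or $r=1/\lambda$. The single point where the argument is more than substitution is excluding the second alternative: $r=1/\lambda$ together with $\alpha(\bm{x})=r\beta(\bm{x})$ cancels the $\bm{a}\cdot\bm{x}$ terms and forces $b=d/\lambda$, which makes $H_\alpha$ and $H_\beta$ coincide, contradicting that they are distinct; thus only $\bm{a}\cdot\bm{y}=0$ survives and the real equation then gives $\alpha(\bm{x})\beta(\bm{x})<0$. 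Case (iii) is the degenerate version with $\bm{a}=0$: the imaginary equation forces $\bm{c}\cdot\bm{y}=0$ (since $r\neq 0$), i.e.\ $\bm{y}$ zero or tangent to $H_\beta$, and the real equation again yields $\alpha(\bm{x})\beta(\bm{x})<0$.

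The main obstacle, such as it is, lies in the consistency check in~(ii): one must notice that $r=1/\lambda$ is ruled out by \emph{distinctness} of the lines rather than by a sign count, since $1/\lambda$ may itself be negative and so cannot be discarded on the grounds $r<0$ alone. Every other verification is an immediate specialization of the master criterion, so I would keep those terse.
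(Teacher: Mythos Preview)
Your proposal is correct and follows essentially the same route as the paper. Both arguments rest on the identical ``master criterion'' you isolate, which is exactly the paper's display~(\ref{eq:descriptsail}); the paper then identifies the tangent direction in case~(i) by flowing $\bm{x}$ along $\bm{y}$ until it hits $H_\alpha\cap H_\beta$ (the paragraph preceding the Proposition), whereas you verify $(\bm{a}-r\bm{c})\cdot(q-\bm{x})=0$ directly---a cosmetic difference. Your treatment of cases~(ii) and~(iii), including the observation that $r=1/\lambda$ is excluded by linear independence of $\alpha,\beta$ rather than by sign, is more explicit than what the paper writes down, but the underlying argument is the same.
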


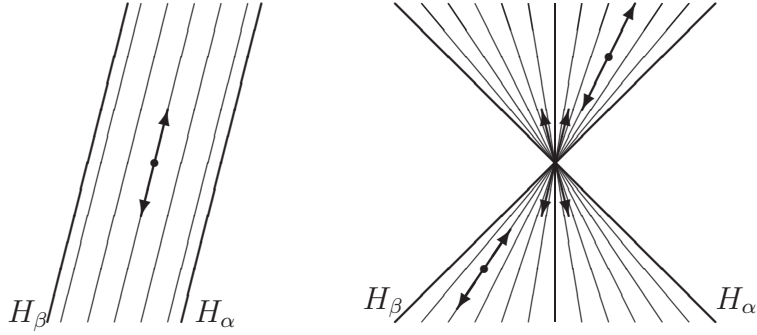
\begin{figure}[htbp]
\begin{picture}(100,120)(20,0)
\thicklines

\put(120,0){\line(1,4){30}}
\put(125,0){$H_\alpha$}

\put(70,0){\line(1,4){30}}
\put(55,0){$H_\beta$}

\thinlines
\multiput(75,0)(10,0){5}{\line(1,4){30}}

\thicklines
\put(110,60){\circle*{3}}
\put(110,60){\vector(1,4){5}}
\put(110,60){\vector(-1,-4){5}}

\put(188,5){$H_\beta$}
\put(200,0){\line(1,1){120}}

\put(320,0){\line(-1,1){120}}
\put(320,5){$H_\alpha$}

\thinlines
\put(210,0){\line(5,6){100}}
\put(220,0){\line(2,3){80}}
\put(230,0){\line(1,2){60}}
\put(240,0){\line(1,3){40}}
\put(250,0){\line(1,6){20}}
\put(260,0){\line(0,1){120}}
\put(270,0){\line(-1,6){20}}
\put(280,0){\line(-1,3){40}}
\put(290,0){\line(-1,2){60}}
\put(300,0){\line(-2,3){80}}
\put(310,0){\line(-5,6){100}}

\thicklines
\put(260,60){\vector(1,4){5}}
\put(260,60){\vector(-1,4){5}}
\put(260,60){\vector(1,-4){5}}
\put(260,60){\vector(-1,-4){5}}

\put(233.333,20){\circle*{3}}
\put(233.333,20){\vector(2,3){10}}
\put(233.333,20){\vector(-2,-3){10}}

\put(280,100){\circle*{3}}
\put(280,100){\vector(1,2){10}}
\put(280,100){\vector(-1,-2){10}}

\end{picture}
     \caption{Sails $S(\alpha, \beta)$.}
\label{fig:descriptsails}
\end{figure}

Define 
$$
|\arg(\bm{y})|:=
\left\{
\begin{array}{cl}
\arg(\bm{y}),&
\mbox{ if }0\leq\arg(\bm{y})<\pi\\
\arg(\bm{y})-\pi,&
\mbox{ if }\pi\leq\arg(\bm{y})<2\pi. 
\end{array}
\right.
$$

Using the above and Proposition \ref{prop:between}, 
we have 
\begin{proposition}
\label{prop:sailsM}
Let $\bm{x}+\ii\bm{y}\in S_i=S(\alpha_{i-1}, \alpha_i)\cap M(\scA)$. 
\begin{itemize}
\item 
If $\bm{x}$ is the intersection $H_{i-1}\cap H_i$, then 
$\bm{y}\neq\bm{0}$ and $\theta_{i-1}<|\arg(\bm{y})|<\theta_i$. 
\item 
If $\bm{x}$ is not the intersection $H_{i-1}\cap H_i$, then 
$\alpha_{i-1}(\bm{x})\alpha_{i}(\bm{x})<0$ and $\bm{y}\neq\bm{0}$ with 
$|\arg(\bm{y})|=|\pv_i(\bm{x})|$ or $\bm{y}=\bm{0}$. 
\end{itemize}
\end{proposition}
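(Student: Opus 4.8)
The plan is to read off both statements directly from the structural description of the sail in Proposition~\ref{prop:sails}, translating its conditions on $\bm{x}$ and $\bm{y}$ into the angular language of $\arg(\bm{y})$ and $\pv_i$. I would write $\alpha=\alpha_{i-1}$ and $\beta=\alpha_i$, so that $S_i=S(\alpha,\beta)\cap M(\scA)$, and split according to whether $H_{i-1}$ and $H_i$ intersect, are parallel, or one of them is the line at infinity, matching the three cases (i), (ii), (iii) of Proposition~\ref{prop:sails}.

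First I would treat the case $\bm{x}=H_{i-1}\cap H_i$, which can occur only when the two lines meet, so that part~(i) applies. Since then $\alpha(\bm{x})=\beta(\bm{x})=0$, only the second alternative of part~(i) is available, and it already furnishes $\bm{y}\neq\bm{0}$ together with the geometric condition that the line $\bm{x}+\bR\cdot\bm{y}$ enters the open domain $D=\{\alpha_{i-1}\alpha_i<0\}$. To convert this into the angular bound I would pick a point $p\in D$ on that line close to $\bm{x}=q$; then $\stackrel{\rightarrow}{qp}$ is a nonzero multiple of $\pm\bm{y}$, so $|\arg(\bm{y})|=|\pv_i(p)|$ by the very definition of the pivotal argument, and Proposition~\ref{prop:between} shows $|\arg(\bm{y})|$ lies strictly between $\theta_i$ and $\theta_{i-1}$, which is the asserted inequality.

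For the case $\bm{x}\neq H_{i-1}\cap H_i$ I would use the first alternative of part~(i) in the intersecting case and parts~(ii)/(iii) in the parallel/constant cases. In each instance the membership condition forces $\alpha_{i-1}(\bm{x})\alpha_i(\bm{x})<0$ and constrains $\bm{y}$ to be tangent to a specific line through $\bm{x}$: the line $\overline{\bm{x}q}$ joining $\bm{x}$ to $q=H_{i-1}\cap H_i$ when the lines meet, and a line parallel to $H_{i-1}$ (equivalently $H_i$) otherwise. Thus either $\bm{y}=\bm{0}$, or $\bm{y}$ is nonzero with $\arg(\bm{y})\equiv\arg(\stackrel{\rightarrow}{q\bm{x}})\pmod{\pi}$ in the intersecting case, respectively $\arg(\bm{y})\equiv\theta_i\pmod{\pi}$ in the parallel/constant case. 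Since $\pv_i(\bm{x})=\arg(\stackrel{\rightarrow}{q\bm{x}})$ by definition in the first situation and $\pv_i(\bm{x})=\theta_i+\pi$ in the second, reducing modulo $\pi$ gives $|\arg(\bm{y})|=|\pv_i(\bm{x})|$ in both, as claimed.

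The only genuinely delicate point is the conversion carried out in the intersection case: one must be sure that a direction $\bm{y}$ whose line meets $D$ has $|\arg(\bm{y})|$ in the correct open angular interval, and not in the complementary pair of sectors cut out by $H_{i-1}$ and $H_i$ at $q$. I expect this to be the main obstacle, but it is precisely the content of Proposition~\ref{prop:between} applied to a nearby point of $D$, so once the sign bookkeeping of $\alpha_{i-1},\alpha_i$ is matched against the labelling of the sectors the argument closes. The remaining care is purely clerical: tracking the degenerate possibility $\bm{y}=\bm{0}$, which is admitted only in the second statement because the zero vector is tangent to every line, and keeping the three sub-cases of Proposition~\ref{prop:sails} aligned with the intersecting/parallel dichotomy of the proposition.
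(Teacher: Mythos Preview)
Your proposal is correct and matches the paper's own approach: the paper states Proposition~\ref{prop:sailsM} simply as a consequence of Proposition~\ref{prop:sails} (``the above'') together with Proposition~\ref{prop:between}, without further detail, and your argument is precisely the unpacking of that sentence. Your careful treatment of the three sub-cases of Proposition~\ref{prop:sails} and the translation via a nearby point $p\in D$ to invoke Proposition~\ref{prop:between} in the intersection case is exactly the intended reasoning.
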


Now we prove 
Theorem \ref{thm:main} (i): 
$$
S_i\cap S_j=
\bigsqcup C, 
$$
where $C$ runs all chambers 
satisfying 
$\alpha_{i-1}(C)\alpha_i(C)<0, 
\alpha_{j-1}(C)\alpha_j(C)<0$ ($1\leq i<j\leq n$). 
Suppose that $\bm{x}+\ii\bm{y}\in S_i\cap S_j$ and 
$\bm{y}\neq\bm{0}$. 
Then by Proposition \ref{prop:sailsM} and 
Proposition \ref{prop:between}, we have 
$$
\theta_j\leq|\arg(\bm{y})|
\leq\theta_{j-1},\mbox{ and }
\theta_i\leq|\arg(\bm{y})|
\leq\theta_{i-1}. 
$$
This happens only when 
$\theta_{i-1}=\theta_i=\theta_{j-1}=\theta_j$, 
which means that 
$H_{i-1}, H_i, H_{j-1}$ and $H_j$ are parallel. 
However, since 
$\{\bm{x}\in\bR^2\mid\alpha_{i-1}\alpha_i<0\}$ 
and 
$\{\bm{x}\in\bR^2\mid\alpha_{j-1}\alpha_j<0\}$ 
are parallel strips, which do not intersect. 
This is a contradiction. Hence we have 
$\bm{y}=\bm{0}$, and $S_i\cap S_j$ is a union of 
chambers. (See Figure \ref{fig:transvint}.)

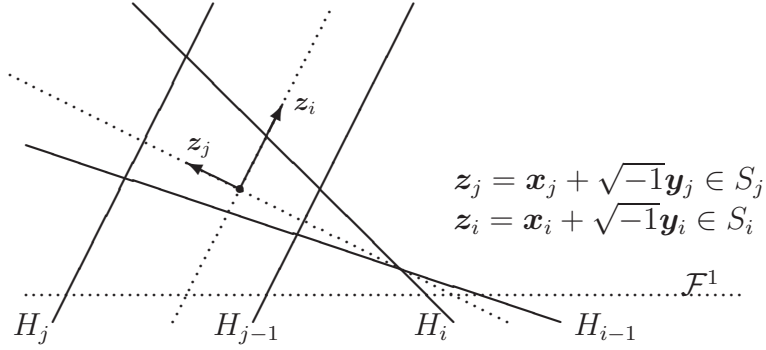
\begin{figure}[htbp]
\begin{picture}(100,120)(20,0)
\thicklines

\multiput(90,10)(3,0){90}{\circle*{1}}
\put(335,10){$\scF^1$}

\put(100,0){\line(1,2){60}}
\put(85,-5){$H_{j}$}
\put(175,0){\line(1,2){60}}
\put(160,-5){$H_{j-1}$}
\put(250,0){\line(-1,1){120}}
\put(235,-5){$H_{i}$}
\put(290,0){\line(-3,1){200}}
\put(295,-5){$H_{i-1}$}

\thinlines
\multiput(270,0)(-3,1.5){63}{\circle*{0.5}}

\multiput(145,0)(1.5,3){40}{\circle*{0.5}}

\thicklines
\put(170,50){\circle*{3}}
\put(170,50){\vector(-2,1){20}}
\put(150,65){\footnotesize $\bm{z}_j$}
\put(170,50){\vector(1,2){16}}
\put(190,80){\footnotesize $\bm{z}_i$}

\put(250,50){$\bm{z}_j=\bm{x}_j+\ii\bm{y}_j\in S_j$}
\put(250,35){$\bm{z}_i=\bm{x}_i+\ii\bm{y}_i\in S_i$}

\end{picture}
     \caption{$S_i$ and $S_j$ intersect transversely.}
\label{fig:transvint}
\end{figure}

\subsection{Contractibility of $S_i^\circ$}

Now we prove that $S_i^\circ=S_i\setminus\bigcup_{C\in\ch_2^\scF(\scA)}C$ 
is contractible. Let us denote $A_i:=S(\alpha_{i-1}, \alpha_i)\cap\scF^1$. 
Since $A_i$ is obviously contractible, therefore it suffices to 
construct a deformation retract onto $A_i$, that is, a family of 
continuous map 
$f_t:S_i^\circ\rightarrow S_i^\circ$ which satisfies 
$f_0=\id_{S_i^\circ}$, $f_1(S_i^\circ)=A_i$ and $f_t|_{A_i}=\id_{A_i}$. 

Define a continuous map $\rho:S_i^\circ\rightarrow A$, 
$\bm{x}+\ii\bm{y}\mapsto \rho(\bm{x}+\ii\bm{y})$ by 
\begin{itemize}
\item[(1)] 
if $\bm{y}\neq\bm{0}$, then 
$\rho(\bm{x}+\ii\bm{y})=A_i\cap(\bm{x}+\bR\cdot\bm{y})$, 
\item[(2)] 
if $H_i\cap H_{i-1}\neq\emptyset$ and $\bm{y}=\bm{0}$, then 
$\rho(\bm{x}+\ii\bm{y})=A_i\cap\overline{(\bm{x}\cdot H_i\cap H_{i-1})}$, 
where $\overline{(\bm{x}\cdot H_i\cap H_{i-1})}$ is the line passing 
through $\bm{x}$ and the intersection $H_i\cap H_{i-1}$, 
\item[(3)] 
if $H_i\cap H_{i-1}=\emptyset$ and $\bm{y}=\bm{0}$, then 
$\rho(\bm{x}+\ii\bm{y})=A_i\cap L_{\bm{x}}$, 
where $L_{\bm{x}}$ is the line passing 
through $\bm{x}$ and parallel to $H_i$. 
\end{itemize}
By Proposition \ref{prop:sailsM}, $\rho$ is a well-defined 
continuous map. Note that $\rho|_{A_i}=\id_{A_i}$.

\begin{figure}[htbp]
\begin{picture}(100,120)(20,0)
\thicklines

\put(125,-5){$H_i$}
\put(140,0){\line(1,1){120}}

\put(260,0){\line(-1,1){120}}
\put(263,-5){$H_{i-1}$}

\put(100,55){\line(5,-1){250}}

\multiput(100,10)(3,0){87}{\circle*{1}}

\put(160,-3){$A_i$}
\thinlines
\put(153,10){\line(1,0){94}}
\put(150,10){\circle{5}}
\put(250,10){\circle{5}}

\thicklines
\put(192,28){\circle*{3}}
\put(192,28){\vector(1,4){5}}

\put(210,100){\circle{3}}
\put(210.5,102){\vector(1,4){5}}
\put(172,118){\scriptsize $\bm{x}+\ii\bm{y}$}

\thinlines
\put(185,0){\line(1,4){24.35}}

\put(180,100){\circle{3}}
\put(180.8,98.4){\vector(1,-2){8}}

\put(220,20){\circle*{3}}
\put(220,20){\vector(-1,2){7}}

\put(200,60){\vector(-1,2){7}}
\put(200,60){\circle{4}}

\put(204.5,51){\vector(1,-2){6}}
\put(204,52){\circle{3}}

\thicklines
\put(187.5,10){\circle*{4}}
\put(189,2){\scriptsize $\rho(\bm{x}+\ii\bm{y})$}

\end{picture}
     \caption{Deformation retract $\rho(\bm{x}+\ii\bm{y})$.}
\label{fig:contractS}
\end{figure}

Define 
$$
f_t(\bm{x}+\ii\bm{y})=
((1-t)\bm{x}+t\rho(\bm{x}+\ii\bm{y}))+
\ii(1-t)\bm{y}. 
$$
If $\bm{y}\neq\bm{0}$, then the real part 
$((1-t)\bm{x}+t\rho(\bm{x}+\ii\bm{y}))$ is 
on the line $\bm{x}+\bR\cdot\bm{y}$ and the imaginary 
part is nonzero provided $t\neq 1$. Hence 
$f_t(\bm{x}+\ii\bm{y})\in S_i^\circ$ 
(see also Lemma \ref{lem:unifmotsail}). 
If $\bm{y}=\bm{0}$, then $\bm{x}$ is contained 
in the chamber $C_i$. (Otherwise, $\bm{x}$ is contained 
in some chamber $C\in\ch_2^\scF(\scA)$ which does not 
intersects $\scF^1$.) 
Hence $f_t(\bm{x})\in C_i\subset S_i^\circ$. 
The map $f_t$ determines a deformation contraction of 
$S_i^\circ$ onto $A_i$. (See Figure \ref{fig:contractS}.)

\subsection{Contractibility of $U$}

We break the proof of the contractibility of 
$U=M(\scA)\setminus\bigcup_{i=1}^n S_i$ up into 
$n$ steps. 

\subsubsection{Filtration $U_k$}

\begin{definition}
Define $U_0=U$ and 
$$
U_k:=
\{\bm{z}=\bm{x}+\ii\bm{y}\in U
\mid
\alpha_1(\bm{x})\leq 0, \dots, 
\alpha_k(\bm{x})\leq 0\},  
$$
for $k=1, \dots, n$. 
\end{definition}
Obviously 
$$
U=U_0\supset U_1\supset \dots \supset U_n, 
$$
and 
\begin{proposition}
$U_n$ is star-shaped. In particular, $U_n$ is 
contractible. 
\end{proposition}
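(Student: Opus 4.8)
The plan is to show that $U_n$ is star-shaped with respect to the real origin $\scF^0=\bm{0}\in\bR^2\subset\bC^2$, which makes contractibility immediate. First I would check that $\bm{0}\in U_n$: by the normalization in \S\ref{subsec:numbering} we have $\alpha_i(\bm{0})<0$ for every $i$, so $\bm{0}$ satisfies the defining inequalities $\alpha_i(\bm{x})\le 0$ of $U_n$, lies in the open chamber $C_0\subset M(\scA)$, and, having $\bm{y}=\bm{0}$ with $\alpha_{i-1}(\bm{0})\alpha_i(\bm{0})>0$, meets no sail $S_i$.

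The heart of the argument is the observation that the complement is \emph{sail-free over the open chamber} $C_0$: if $\bm{x}$ satisfies $\alpha_i(\bm{x})<0$ for all $i$, then $\bm{x}+\ii\bm{y}\in U$ for every $\bm{y}\in\bR^2$. Indeed, $\bm{x}$ is an interior point of a chamber, so no line of $\scA$ passes through it and $\bm{x}+\ii\bm{y}\in M(\scA)$ by the tangent bundle description. Moreover, by the identity (\ref{eq:descriptsail}) in the proof of Lemma \ref{lem:unifmotsail}, membership $\bm{x}+\ii\bm{y}\in S(\alpha_{i-1},\alpha_i)$ would force $\alpha_{i-1}(\bm{x})=r\,\alpha_i(\bm{x})$ for some $r\in\bR_{<0}$, hence $\alpha_{i-1}(\bm{x})\alpha_i(\bm{x})\le 0$; this contradicts $\alpha_{i-1}(\bm{x})\alpha_i(\bm{x})>0$ (using the convention $\alpha_0=-1$, so that both factors are negative for every $i$). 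Thus $\bm{x}+\ii\bm{y}$ avoids every $S_i$.

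Then for any $\bm{z}=\bm{x}+\ii\bm{y}\in U_n$ I would consider the straight segment $t\mapsto t\bm{z}=t\bm{x}+\ii t\bm{y}$, $t\in[0,1]$. Since each $\alpha_i$ is affine and $\alpha_i(\bm{0})<0$,
\[
\alpha_i(t\bm{x})=t\,\alpha_i(\bm{x})+(1-t)\,\alpha_i(\bm{0})\le 0,
\]
with strict inequality whenever $t<1$. Hence for $t<1$ the real part $t\bm{x}$ lies in the open chamber $C_0$, so $t\bm{z}\in U$ by the previous paragraph and satisfies the inequalities defining $U_n$, giving $t\bm{z}\in U_n$; at $t=1$ we have $t\bm{z}=\bm{z}\in U_n$ by hypothesis. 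Therefore the whole segment lies in $U_n$, so $U_n$ is star-shaped with center $\bm{0}$, and in particular contractible.

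The only delicate points will be those $\bm{z}\in U_n$ whose real part lies on $\partial C_0$ --- where a line passes through $\bm{x}$, or $\bm{x}$ is a vertex $H_{i-1}\cap H_i$, so that both membership in $M(\scA)$ and the sail conditions are genuinely constrained (cf. Proposition \ref{prop:sails}(i)). The point of choosing the radial homotopy toward $\bm{0}$ is precisely that it pushes such boundary configurations into the \emph{open} chamber for every $t<1$, confining all the subtlety to the single endpoint $t=1$, where membership in $U_n$ is given for free. Thus the affine sign bookkeeping is routine, and the real work reduces to the sail-free statement over the open chamber established above.
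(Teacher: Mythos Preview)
Your proof is correct and follows the approach the paper evidently intends: the proposition is stated without proof in the paper, and your argument supplies exactly the natural details --- verifying that $U_n$ is star-shaped with center $\scF^0=\bm{0}$, using the sign conventions of \S\ref{subsec:numbering} together with the real-part identity (\ref{eq:descriptsail}) to see that no sail can meet the fiber over the open chamber $C_0$. Your handling of the boundary points (pushing them into $C_0$ for $t<1$ via the affine convexity computation) is clean and correct.
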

\noindent 
Therefore it is enough to construct a 
deformation retract $\rho_k:U_k\rightarrow U_{k+1}$ for 
$k=0, \dots, n-1$. 

\subsubsection{The case $k=0$}

First we construct a deformation retraction 
$\rho_0:U=U_0\rightarrow U_1=\{\bm{z}=\bm{x}+\ii
\bm{y}\mid\alpha_1(\bm{x})\leq 0\}$. 

Let $\bm{z}=\bm{x}+\ii\bm{y}\in U_0\setminus U_1$. Then, 
by definition, $\alpha_1(\bm{x})>0$. Recall Proposition 
\ref{prop:sails} 
that 
$$
S_1=\{\bm{x}+\ii\bm{y}
\mid
\alpha(\bm{x})>0\mbox{ and }
\bm{y} \mbox{ is either zero or parallel to } H_1\}. 
$$
Therefore $\bm{z}\notin\ S_1$ implies that the affine line 
$\bm{x}+\bR\cdot\bm{y}\subset\bR^2$ is 
not parallel to $H_1$, hence intersects $H_1$. 
Denote by $\tau(\bm{z})\in\bR$ the unique 
real number satisfying 
$\bm{x}+\tau(\bm{z})\bm{y}\in H_1$. 
Define the family of continuous map 
$f_t:U_0\rightarrow U_0$ 
($0\leq t\leq 1$) by 
$$
f_t(\bm{z})=
\left\{
\begin{array}{cl}
(\bm{x}+t\cdot\tau(\bm{z})\bm{y})+\ii
\bm{y}
&\mbox{ if }\bm{z}\in U_0\setminus U_1\\
\bm{x}+\ii\bm{y}
&\mbox{ if }\bm{z}\in U_1. 
\end{array}
\right.
$$
Then by Lemma \ref{lem:unifmotsail}, 
$f_t(\bm{z})\in U$. Hence $\rho_0=f_1:U_0\rightarrow U_1$ 
is a deformation retraction. (Figure \ref{fig:rho0}.)

\begin{figure}[htbp]
\begin{picture}(100,100)(20,0)
\thicklines

\multiput(90,10)(3,0){90}{\circle*{1}}
\put(335,10){$\scF^1$}

\put(100,0){\line(2,1){150}}
\put(85,-5){$H_{i}$}

\put(250,0){\line(-1,1){100}}
\put(235,-5){$H_{1}$}

\put(208,78){\circle{3}}
\thinlines
\put(207.5,80){\vector(-1,4){4.5}}
\thicklines
\put(215,90){$\bm{z}=\bm{x}+\ii\bm{y}\in U_0\setminus U_1$}

\thinlines
\put(206,78){\vector(-1,0){16}}
\thicklines

\multiput(220,30)(-0.7,2.8){17}{\circle*{1}}
\multiput(188,78)(-3,0){6}{\circle*{1}}

\thinlines
\put(220,30){\vector(-1,4){4.5}}
\thicklines

\put(220,30){\circle{3}}
\put(225,33){$\rho_0(\bm{z})$}

\thinlines
\put(172,78){\vector(-1,0){16}}
\thicklines

\put(172,78){\circle{3}}

\end{picture}
     \caption{$\rho_0:U_0\rightarrow U_1$.}
\label{fig:rho0}
\end{figure}
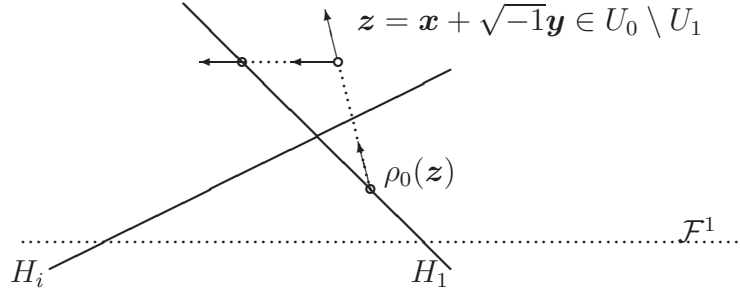

\subsubsection{The case that $H_k$ and $H_{k+1}$ are parallel}
\label{subsub:para}

Here, we assume that $H_k$ and $H_{k+1}$ are parallel 
($1\leq k\leq n-1$). 

\begin{definition}
\label{def:roof}
\begin{itemize}
\item[(1)] 
Define the closed subset 
$D_k\subset\bR^2$ by 
$$
D_k=
\{
\bm{x}\in\bR^2\mid
\alpha_1(\bm{x})\leq 0, 
\alpha_2(\bm{x})\leq 0, 
\dots, 
\alpha_k(\bm{x})\leq 0, \mbox{ and }
\alpha_{k+1}(\bm{x})\geq 0
\}. 
$$
\item[(2)] 
Denote the upper roof of $D_k$ by $R_k$. 
More precisely, $R_k$ is the closure of 
$\partial(D_k)\setminus(H_k\cup H_{k+1})$. 
\item[(3)] 
Suppose $\alpha_k(\bm{x})\leq0$ and $\alpha_{k+1}(\bm{x})\geq 0$. 
Then denote the line passing through $\bm{x}$ which is 
parallel to $H_k$ by $L_{\bm{x}}$. 
\end{itemize}
\end{definition}

\begin{figure}[htbp]
\begin{picture}(100,120)(20,0)
\thicklines

\multiput(90,10)(3,0){90}{\circle*{1}}
\put(349,10){$\scF^1$}

\thinlines
\put(104,0){\line(1,1){120}}
\put(85,-5){$H_{k+2}$}

\thicklines
\put(150,0){\line(0,1){120}}
\put(135,-5){$H_{k+1}$}

\put(250,0){\line(0,1){120}}
\put(235,-5){$H_{k}$}

\put(280,0){\line(-1,1){120}}
\put(283,-5){$H_{k-1}$}

\put(350,0){\line(-2,1){240}}
\put(324,-5){$H_{k-2}$}

\put(194,20){$D_k$}
\thinlines
\multiput(149.6,0)(0.1,0){8}{\line(0,1){100}}
\multiput(249.6,0)(0.1,0){8}{\line(0,1){30}}

\thicklines

\put(150,100){\circle*{5}}
\put(210,70){\circle*{5}}
\put(250,30){\circle*{5}}

\multiput(150,100)(2.4,-1.2){26}{\circle*{3}}
\multiput(210,70)(1.9,-1.9){21}{\circle*{3}}

\put(212,50){$R_k$}

\put(180,60){\circle*{3}}
\put(182,60){\footnotesize $\bm{x}$}

\thinlines
\put(180,0){\line(0,1){120}}
\put(182,112){\footnotesize $L_{\bm{x}}$}

\end{picture}
     \caption{$D_k$ and its roof $R_k$.}
\label{fig:roof}
\end{figure}
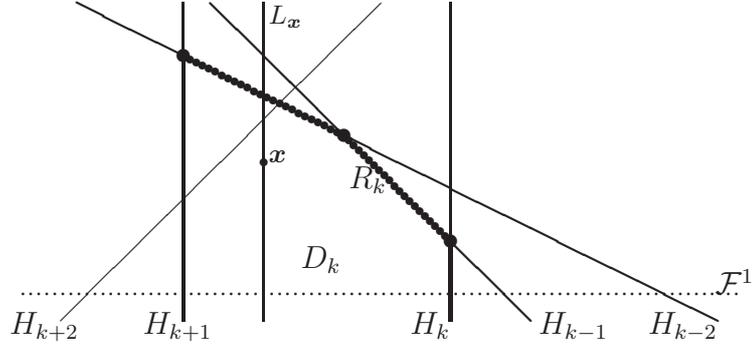

\begin{remark}
By definitions, if $\bm{x}+\ii\bm{y}\in 
U_k\setminus U_{k+1}$, then $\bm{x}\in D_k$. 
\end{remark}
The set 
$\{\bm{x}\in\bR^2\mid \alpha_k(\bm{x})\leq 0\leq 
\alpha_{k+1}(\bm{x})\}$
is a strip with boundaries $H_k$ and $H_{k+1}$. 
We can define a deformation retract of this 
strip to $D_k$ by 
$$
\pr_k(\bm{x})=
\left\{
\begin{array}{cl}
R_k\cap L_{\bm{x}}&\mbox{ if }\bm{x}\notin D_k,\\
\bm{x}&\mbox{ if }\bm{x}\in D_k.
\end{array}
\right.
$$
Suppose $\bm{z}=\bm{x}+\ii\bm{y}\in U_k\setminus U_{k+1}$. 
Since $\bm{z}\notin S_k$, $\bm{y}$ is neither zero nor 
parallel to $H_{k+1}$. Hence there exists a unique 
real number $\tau(\bm{x}, \bm{y})\in\bR$ such that 
$\bm{x}+\tau(\bm{x}, \bm{y})\cdot\bm{y}\in H_{k+1}$. 

Define the family of continuous map $F_t$ ($0\leq t\leq 1$) by 
\begin{equation}
F_t(\bm{x}+\ii\bm{y})=
\pr_k(\bm{x}+t\cdot\tau(\bm{x}, \bm{y})\cdot\bm{y})+
\ii\bm{y}
\end{equation}
for $\bm{x}+\ii\bm{y}\in U_k$ with 
$\bm{x}\in D_k$. (Figure \ref{fig:retractpara}.)

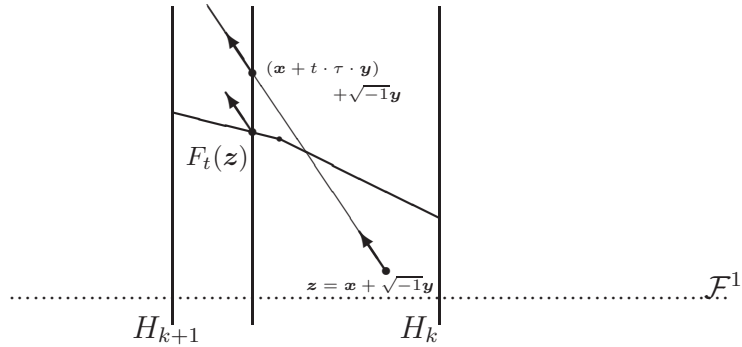
\begin{figure}[htbp]
\begin{picture}(100,120)(20,0)
\thicklines

\multiput(90,10)(3,0){90}{\circle*{1}}
\put(349,10){$\scF^1$}


\thicklines
\put(150,0){\line(0,1){120}}
\put(135,-5){$H_{k+1}$}

\put(250,0){\line(0,1){120}}
\put(235,-5){$H_{k}$}

\put(190,70){\circle*{2}}

\put(250,40){\line(-2,1){60}}

\put(190,70){\line(-4,1){40}}

\put(230,20){\circle*{3}}
\put(230,20){\vector(-2,3){10}}
\put(200,13){\tiny $\bm{z}=\bm{x}+\ii\bm{y}$}

\thinlines
\put(230,20){\line(-2,3){67}}

\thicklines
\put(180,95){\circle*{3}}
\put(180,95){\vector(-2,3){10}}

\put(185,95){\tiny $(\bm{x}+t\cdot\tau\cdot\bm{y})$}
\put(210,85){\tiny $+\ii\bm{y}$}

\thinlines
\put(180,0){\line(0,1){120}}

\thicklines
\put(180,72.5){\circle*{3}}
\put(180,72.5){\vector(-2,3){10}}

\put(155,60){\footnotesize $F_t(\bm{z})$}

\end{picture}
     \caption{Retraction $F_t$.}
\label{fig:retractpara}
\end{figure}

\begin{proposition}
\label{prop:extendpara}
Let us extend the above $F_t$ by 
$$
F_t(\bm{x}+\ii\bm{y})=
\left\{
\begin{array}{cl}
F_t(\bm{z}) \mbox{ (as above)}&
\mbox{ if }\bm{z}\in U_k\setminus U_{k+1}, \\
\bm{z}& \mbox{ if } \bm{z}\in U_{k+1}, 
\end{array}
\right.
$$
Then $F_t(\bm{z})\in U_k$ for any 
$\bm{z}\in U_k$ and hence 
$F_1$ determines a deformation retract 
$U_k\rightarrow U_{k+1}$. 
\end{proposition}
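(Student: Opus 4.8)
The plan is to check that the extended map $F_t$ is a well-defined, continuous self-map of $U_k$ satisfying $F_0=\id_{U_k}$, $F_1(U_k)\subseteq U_{k+1}$ and $F_t|_{U_{k+1}}=\id$; granting these, $F_1$ is the asserted deformation retraction. The whole content sits in showing $F_t(\bm{z})\in U_k$ for $\bm{z}=\bm{x}+\ii\bm{y}\in U_k\setminus U_{k+1}$ and all $t$, which splits into two assertions: that the real part $\pr_k(\bm{x}+t\tau\bm{y})$ meets $\alpha_1\le 0,\dots,\alpha_k\le 0$, and that $\pr_k(\bm{x}+t\tau\bm{y})+\ii\bm{y}$ actually lies in $U=M(\scA)\setminus\bigcup_i S_i$.

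First I would dispose of the real-part condition. Writing $\bm{w}_t=\bm{x}+t\tau\bm{y}=(1-t)\bm{x}+t(\bm{x}+\tau\bm{y})$, both endpoints $\bm{x}$ and $\bm{x}+\tau\bm{y}\in H_{k+1}$ lie in the strip $\{\alpha_k\le 0\le\alpha_{k+1}\}$ (using that $H_{k+1}$ sits on the origin side of $H_k$, which follows from $a_{k+1}<a_k$ and the normalization of \S\ref{subsec:numbering}), so by convexity $\bm{w}_t$ stays in that strip, $\pr_k(\bm{w}_t)$ is defined and lands in $D_k$, forcing $\alpha_1,\dots,\alpha_k\le 0$. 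Since $L_{\bm{w}_t}$ is parallel to $H_k\parallel H_{k+1}$, the value of $\alpha_{k+1}$ is constant under the projection, so at $t=1$ one gets $\alpha_{k+1}(\pr_k(\bm{x}+\tau\bm{y}))=\alpha_{k+1}(\bm{x}+\tau\bm{y})=0$, which is exactly what places $F_1(\bm{z})$ in $U_{k+1}$. The identity $F_0=\id$ (as $\bm{x}\in D_k$ gives $\pr_k(\bm{x})=\bm{x}$) and the continuous matching with the identity along $\{\alpha_{k+1}=0\}$ then follow at once, yielding continuity of the glued map.

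The substantive step is to keep the moving point inside $U$, and here I would split the displacement into the slide $\bm{z}\mapsto\bm{w}_t+\ii\bm{y}$ and the roof projection $\bm{w}_t+\ii\bm{y}\mapsto\pr_k(\bm{w}_t)+\ii\bm{y}$. The slide is a linear uniform motion, so Lemma \ref{lem:unifmot} keeps it in $M(\scA)$ and Lemma \ref{lem:unifmotsail} keeps it off \emph{every} sail; thus $\bm{w}_t+\ii\bm{y}\in U$. For the projection the real part travels along $L_{\bm{w}_t}\parallel H_k$ with $\bm{y}$ fixed, which is not a uniform motion, so I must invoke the explicit sail description of Proposition \ref{prop:sails}. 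The parallel sail $S_{k+1}=S(\alpha_k,\alpha_{k+1})$ is excluded outright, because $\bm{z}\in U$ forces $\bm{y}$ not parallel to $H_k$ (via $\bm{z}\in M(\scA)$ when $\bm{x}\in H_k$, and via $\bm{z}\notin S_{k+1}$ with Proposition \ref{prop:sails}(ii) otherwise), whereas membership in $S_{k+1}$ would demand exactly $\bm{y}\parallel H_k$. The sails $S_i$ with $i\le k$ are excluded because $\pr_k(\bm{w}_t)\in D_k$, where $\alpha_{i-1},\alpha_i\le 0$ gives $\alpha_{i-1}\alpha_i\ge 0$, placing the point outside the open locus $\{\alpha_{i-1}\alpha_i<0\}$ on which $S_i$ lives.

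The main obstacle will be the remaining sails $S_i$ with $i>k+1$, together with checking that the projected real part never meets a complexified line to which $\bm{y}$ is parallel. Here the real part can genuinely enter a region $\{\alpha_{i-1}\alpha_i<0\}$, so non-membership must be read off from the \emph{direction} of $\bm{y}$ rather than from position. My plan is to track $|\arg(\bm{y})|$ against the ordered arguments $\theta_n\le\dots\le\theta_1$: by Proposition \ref{prop:between} a point of such an $S_i$ with intersecting defining lines forces $\theta_i<|\arg(\bm{y})|<\theta_{i-1}\le\theta_{k+1}=\theta_k$, and I would combine this with the fact that the projection segment runs parallel to $H_k$ and ends on the roof lines $H_1,\dots,H_{k-1}$ (which carry the larger arguments) to rule out that the $\bm{y}$-direction ever coincides with the pivotal argument $\pv_i$ while the point sits in the corresponding wedge. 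Concretely, since $\bm{w}_t+\ii\bm{y}\in U$ already excludes every $S_i$, it suffices to show the projection does not cross the critical line $q_i+\bR\bm{y}$ inside the wedge of $H_{i-1},H_i$; establishing this non-crossing, from the relative position of that wedge and the $H_k$-parallel roof segment, is the delicate geometric point on which the argument turns.
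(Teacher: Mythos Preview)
Your overall scaffolding is sound, but your route diverges from the paper's and leaves a real gap.

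The paper's proof is much shorter and does \emph{not} decompose the motion into ``slide then project''. It argues directly by contradiction. After handling the case $\bm{w}_t=\bm{x}+t\tau\bm{y}\in D_k$ via Lemmas~\ref{lem:unifmot} and~\ref{lem:unifmotsail} (exactly your linear-motion step), it assumes $\bm{w}_t\notin D_k$ so that $\bm{x}'=\pr_k(\bm{w}_t)$ lies on the roof $R_k$, and then reduces to the single situation $\bm{x}'\in H_j$ with $\bm{y}\parallel H_j$ for some roof line $H_j$, $1\le j<k$. The contradiction is one line: being above the roof at a face contained in $H_j$ forces $\alpha_j(\bm{w}_t)>0$, while $\bm{x}\in D_k\subset\{\alpha_j\le 0\}$ together with $\bm{y}\parallel H_j$ gives $\alpha_j(\bm{w}_t)=\alpha_j(\bm{x})\le 0$. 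No pivotal-argument bookkeeping, no case split over the index $i$ of $S_i$.

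Your proposal, by contrast, runs a case analysis over the sails $S_i$ and singles out $i>k+1$ as the ``main obstacle''. You then \emph{describe} a plan---bound $|\arg(\bm{y})|$ via Proposition~\ref{prop:between}, and argue that the $H_k$-parallel projection segment cannot meet the line $q_i+\bR\bm{y}$ inside the wedge $\{\alpha_{i-1}\alpha_i<0\}$---but you do not carry it out. That non-crossing claim is precisely the content of the step, and nothing you have written forces it: knowing $|\arg\bm{y}|\le\theta_{i-1}\le\theta_k$ does not by itself prevent the projected point $\bm{x}'$ from landing on $q_i+\bR\bm{y}$ inside the wedge. So as written there is a genuine gap in your argument.

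It is worth noting that the paper's own proof is terse here too: it only explicitly treats the obstruction $\bm{z}'\notin M(\scA)$ coming from a \emph{roof} line $H_j$ with $j<k$, and does not spell out why no sail $S_i$ (for any $i$) nor any line $H_m$ with $m>k+1$ through $\bm{x}'$ causes trouble. So the case you flag as delicate is one the paper leaves implicit as well. If you want a complete argument along your lines, you must actually establish the non-crossing; alternatively, you could try to adapt the paper's parallelism-contradiction device (which exploits that $\bm{x}\in D_k$ and that the obstruction line is the face of $D_k$ one has just crossed) to the remaining cases.
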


\begin{proof}
Let $\bm{z}=\bm{x}+\ii\bm{y}\in U_k$ and 
$F_t(\bm{z})=\bm{z'}=\bm{x'}+\ii\bm{y}$. 
Suppose that $\bm{z'}\notin U_k$. 
If $\bm{x}+t\cdot\tau\cdot\bm{y}\in D_k$, 
then $F_t(\bm{z})=(\bm{x}+t\cdot\tau\cdot\bm{y})+\ii\bm{y}$. 
By Lemma \ref{lem:unifmotsail} 
$F_t(\bm{z})\in M(\scA)$, 
hence contained in $U_k$. 
Thus we may assume that 
$\bm{x}+t\cdot\tau\cdot\bm{y}\notin D_k$ and 
$\bm{x'}\in R_k$. Furthermore, we may assume that 
$\bm{y}\in T_{\bm{x'}}\bR^2$ is contained in 
a line $H_j\subset T_{\bm{x'}}$ with 
$\bm{x'}\in H_j$ for some $1\leq j<k$. 
Then $\bm{x}+t\cdot\tau\cdot\bm{y}$ must be 
contained in the domain $\{\alpha_j>0\}$. 
However, this contradicts $\bm{x}\in\{\alpha_j\leq 0\}$ 
and the fact that $\bm{y}$ is parallel to $H_j$. 
Hence $F_t(\bm{z})\in U_k$. 
\end{proof}

\subsubsection{$LQ$-curves}

The remaining case is the construction of 
deformation retract $U_k\rightarrow U_{k+1}$ when 
$H_k$ and $H_{k+1}$ are not parallel. 
The idea is similar to the previous case, however, 
it requires more technicality. 

An Linear-Quadric curve on $\bR^2$ is, roughly speaking, 
a $C^1$-curve 
which is linear when $x_1\leq 1$ and quadric 
when $x_1\geq 1$. The precise definition is as follows. 

\begin{definition}
\label{def:lq}
An {\em LQ-curve} (Linear-Quadric-curve) $C$ on 
the real plane $\bR^2$ is either a vertical line 
$C=\{(x_1, x_2)\in\bR^2\mid x_1=t\}$ or the 
graph $\{(x, f(x))\mid x\in \bR\}$ of a 
$C^1$-function $f(x)$ such that 
$$
f(x_1)=
\left\{
\begin{array}{cl}
ax_1+b&\mbox{ for }x_1\leq 1,\\
cx_1^2+dx_1 &\mbox{ for }x_1\geq 1, 
\end{array}
\right.
$$
where $t, a, b, c, d\in\bR$. 
\end{definition}

\begin{remark}
\begin{itemize}
\item[(1)] Since $f(x_1)$ is $C^1$ at $x_1=1$, 
$f(x_1)$ should have the following expression. 
\begin{equation}
\label{eq:lqexpress}
f(x_1)=
\left\{
\begin{array}{cl}
ax_1+b&\mbox{ for }x_1\leq 1,\\
-bx_1^2+(a+2b)x_1 &\mbox{ for }x_1\geq 1. 
\end{array}
\right.
\end{equation}
\item[(2)] $f(x_1)$ and the derivative $f'(x_1)$ 
for some $x_1\in\bR$ determines the unique LQ-curve. 
\end{itemize}
\end{remark}

Let 
$\bm{x}\in\bR^2$ 
be a point in the positive quadrant and 
$\bm{y}\in T_{\bm{x}}\bR^2\setminus\{\bm{0}\}$ a 
nonzero tangent vector. 
Then there exists a unique 
$C^1$-map $X_{\bm{x},\bm{y}}:\bR\rightarrow\bR^2$ 
such that 
\begin{itemize}
\item 
$X(0)=\bm{x}, \dot{X}(0)=\bm{y}$, 
\item 
$\{X(t)\mid t\in\bR\}\subset\bR^2$ is an LQ-curve. 
\item 
$|\dot{X}(t)|=|\bm{y}|$. 
\end{itemize}
Roughly speaking, $X(t)$ is a motion along an 
LQ-curve with constant velocity. 
$X_{\bm{x},\bm{y}}(t)$ is continuous with 
respect to $\bm{x}, \bm{y}$ and $t$. 

In the remainder of this section, 
we assume $\bm{x}\in(\bR_{\geq 0})^2$ and 
$\bm{x}\neq\bm{0}$. 
Then $0\leq \arg\bm{x}\leq \frac{\pi}{2}$. 
We also assume that $\bm{y}\notin\bR\cdot\bm{x}$. 
We call $\bm{y}$ positive (resp. negative) 
if $\arg\bm{x}<\arg\bm{y}<\arg\bm{x}+\pi$ 
(resp. $\arg\bm{x}-\pi<\arg\bm{y}<\arg\bm{x}$). 
It is easily seen if $\bm{y}$ is positive (resp. negative), 
then $\arg X_{\bm{x},\bm{y}}(t)$ is increasing 
(resp. decreasing) in $t$. 

\begin{lemma}
\label{lem:lqhit}
Let $\bm{x}$ and $\bm{y}$ as above. 
Then the LQ-curve 
$X_{\bm{x}, \bm{y}}(t)$ intersects the 
positive $x_1$-axis $\{(x_1, 0)\mid x_1>0\}$ 
exactly once. 
\end{lemma}

\begin{proof}
If $\bm{y}$ is vertical, the assertion holds obviously. 
Assume that $\bm{y}$ is not vertical. 
We use the expression (\ref{eq:lqexpress}). 
From the assumption that 
$\bm{y}\notin\bR\cdot\bm{x}$, 
$b\neq 0$. Suppose $b>0$. 
The quadric equation $-bx^2+(a+2b)x=0$ 
has the solution $x=\frac{a+2b}{b}$ (and $x=0$). 
If $\frac{a+2b}{b}>1$, then we have $a+b=f(1)>0$. 
Since $f(0)=b>0$, $f(t)\neq 0$ for $0\leq t\leq 1$. 
Hence $x=\frac{a+2b}{b}$ is the unique solution. 
$\frac{a+2b}{b}=1$ is equivalent to say $a+b=0$. 
Hence $x_1=1$ is the unique solution of $f(x_1)=0$. 
$\frac{a+2b}{b}<1$ implies that $a+b<0$. 
Since $f(0)>0>f(1)$, there exists the unique 
solution $f(t)=0$ with $0\leq t\leq 1$. 
The case $b<0$ is similar. 
\end{proof}

\begin{definition}
Let $\bm{x}$ and $\bm{y}$ be as above. 
Denote by $\tau=\tau(\bm{x}, \bm{y})$ the unique 
real number such that $X_{\bm{x}, \bm{y}}(\tau)\in
\{(x_1, 0)\mid x_1> 0\}$. 
(Figure \ref{fig:lqcurves}.) 
\end{definition}

\begin{remark}
$\tau(\bm{x}, \bm{y})$ is continuous on 
$\{(\bm{x}, \bm{y})\mid\bm{x}
\in(\bR_{\geq 0})^2\setminus\{\bm{0}\}, 
\bm{y}\notin\bR\cdot\bm{x}\}$. 
\end{remark}

\begin{figure}[htbp]
\begin{picture}(100,120)(20,-5)
\thicklines

\put(100,0){\line(1,0){260}}
\put(100,0){\line(0,1){115}}

\multiput(200,0)(0,3){38}{\circle*{1}}

\multiput(100,0)(3,1.5){80}{\circle*{1}}

\put(165,100){$x_1=1$}

\put(170,35){\circle*{3}}
\put(165,40){\scriptsize $\bm{x}$}
\put(188,35){\scriptsize $\bm{y}$}

\put(170,35){\vector(1,1){15}}
\put(135,0){\line(1,1){65}}
\qbezier(200,65)(215,80)(230,115)

\put(170,35){\vector(4,1){20}}
\put(100,17.5){\line(4,1){100}}
\qbezier(200,42.5)(220,47.5)(240,47.5)
\qbezier(240,47.5)(260,47.5)(280,42.5)
\qbezier(280,42.5)(300,37.5)(350,0)
\put(350,0){\vector(4,-3){14}}
\put(350,0){\circle*{3}}
\put(325,-9){\scriptsize $X_{\bm{x, y}}(\tau)$}

\put(170,35){\vector(4,-1){20}}
\put(100,52.5){\line(4,-1){100}}
\qbezier(200,27.5)(240,17.5)(260,0)

\end{picture}
     \caption{LQ-curves $X_{\bm{x}, \bm{y}}(t)$.}
\label{fig:lqcurves}
\end{figure}

\subsubsection{The case that $H_k$ and $H_{k+1}$ are not parallel}

Next we assume that $H_k$ and $H_{k+1}$ are not parallel, 
and constructing a deformation retraction 
$\rho_k:U_k\rightarrow U_{k+1}$. 
The idea is similar to the parallel case 
(\S\ref{subsub:para}). However we need LQ-curves 
to construct the retraction. 

Here we choose 
coordinates $x_1, x_2$ such that 
$\alpha_k=-x_1$, 
$\alpha_{k+1}=x_2$ and 
$\scF^1=\{x_1+x_2=1\}$. 
Recall Definition \ref{def:roof} that 
$D_k\subset\bR^2$ is defined by 
$$
D_k=
\{
\bm{x}\in\bR^2\mid
\alpha_1(\bm{x})\leq 0, 
\alpha_2(\bm{x})\leq 0, 
\dots, 
\alpha_k(\bm{x})\leq 0, \mbox{ and }
\alpha_{k+1}(\bm{x})\geq 0
\}, 
$$
and the roof $R_k$ is defined as 
the closure of 
$\partial(D_k)\setminus(H_k\cup H_{k+1})$. 

\begin{definition}
Suppose $\bm{x}\neq\bm{0}$. 
Then denote the line passing through $\bm{x}$ and 
the intersection 
$\{\bm{0}\}=H_k\cap H_{k+1}$ by $L_{\bm{x}}$. 
\end{definition}

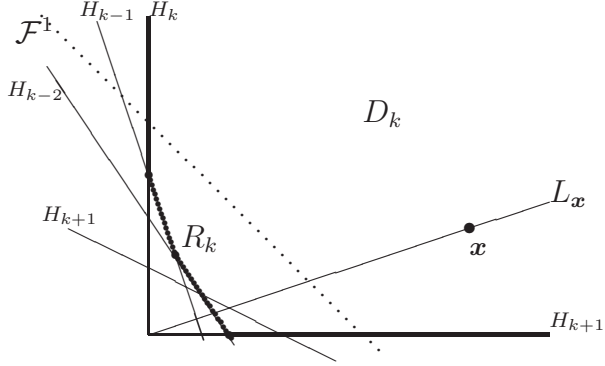
\begin{figure}[htbp]
\begin{picture}(100,130)(20,0)
\thicklines

\put(100,0){\line(1,0){150}}
\put(100,0){\line(0,1){120}}

\put(50,110){$\scF^1$}
\multiput(60,120)(3,-3){43}{\circle*{1}}

\thinlines

\put(250,3){\scriptsize $H_{k+1}$}

\put(100,120){\scriptsize $H_k$}

\put(74,120){\scriptsize $H_{k-1}$}
\put(80.666,118){\line(1,-3){40}}

\put(47,90){\scriptsize $H_{k-2}$}
\put(132,-4){\line(-2,3){70}}

\put(100.5,60){\line(0,1){60}}
\put(99.5,60){\line(0,1){60}}

\put(130,0.5){\line(1,0){120}}
\put(130,-0.5){\line(1,0){120}}

\put(112,33){$R_k$}
\put(100,60){\circle*{3}}
\multiput(100,60)(0.6,-1.8){17}{\circle*{1.5}}
\put(110,30){\circle*{3}}
\multiput(110,30)(1,-1.5){22}{\circle*{1.5}}
\put(130,0){\circle*{3}}

\put(60,43){\scriptsize $H_{k+1}$}
\put(70,40){\line(2,-1){100}}

\put(250,50){$L_{\bm{x}}$}
\put(220,30){\footnotesize $\bm{x}$}
\put(220,40){\circle*{4}}
\put(100,0){\line(3,1){150}}

\put(180,80){$D_k$}

\end{picture}
     \caption{$D_k$, $R_k$ and $L_{\bm{x}}$.}
\label{fig:nonparalDk}
\end{figure}

We can define a 
deformation retract of 
$(\bR_{\geq 0})^2\setminus\{\bm{0}\}$ 
to $D_k$ by 
$$
\pr_k(\bm{x})=
\left\{
\begin{array}{cl}
R_k\cap L_{\bm{x}}&\mbox{ if }\bm{x}\notin D_k,\\
\bm{x}&\mbox{ if }\bm{x}\in D_k.
\end{array}
\right.
$$

Suppose $\bm{z}=\bm{x}+\ii\bm{y}\in U_k\setminus U_{k+1}$. 
Since $\bm{z}\notin S_k$, $\bm{y}\notin\bR\cdot\bm{x}$ 
(Proposition \ref{prop:sails}). 
Hence there exists a unique 
real number $\tau=\tau(\bm{x}, \bm{y})\in\bR$ such that 
$X_{\bm{x}, \bm{y}}(\tau)\in H_{k+1}$. 

Define the family of continuous map $F_t$ ($0\leq t\leq 1$) by 
\begin{equation}
F_t(\bm{x}+\ii\bm{y})=
\pr_k(X_{\bm{x}, \bm{y}}(t\cdot \tau(\bm{x}, \bm{y})))+
\ii \dot{X}_{\bm{x}, \bm{y}}(t\cdot \tau(\bm{x}, \bm{y})), 
\end{equation}
for $\bm{x}+\ii\bm{y}\in U_k$ with 
$\bm{x}\in D_k$. (Figure \ref{fig:retractnonpara}.)

\begin{figure}[htbp]
\begin{picture}(100,150)(20,0)
\thicklines

\put(40,0){\line(1,0){320}}
\put(50,-10){\line(0,1){160}}

\put(23,144){\footnotesize $\scF^1$}
\multiput(40,150)(3,-3){53}{\circle*{1}}


\put(360,-7){\scriptsize $H_{k+1}$}

\put(52,147){\scriptsize $H_k$}



\put(50,120){\circle*{2}}
\put(50,120){\line(1,-3){20}}
\put(70,60){\circle*{2}}
\put(70,65){\footnotesize $R_k$}
\put(70,60){\line(5,-6){50}}
\put(120,0){\circle*{2}}

\put(110,140){\circle*{3}}
\put(110,140){\vector(-1,-4){6}}
\thinlines
\put(110,140){\line(-1,-4){37}}
\put(95,120){\footnotesize $\bm{z}$}

\thicklines
\put(110,140){\vector(1,0){20}}
\put(135,142){\footnotesize $\bm{z'}$}

\thicklines
\put(80,20){\circle*{3}}
\put(80,20){\vector(-1,-4){6}}

\thinlines
\put(50,0){\line(3,2){55}}

\thicklines
\put(95,30){\circle*{3}}
\put(95,30){\vector(-1,-4){6}}
\put(102,25){\footnotesize $F_t(\bm{z})$}

\put(120,0){\circle*{3}}
\put(120,0){\vector(-1,-4){6}}
\put(120,-9){\footnotesize $F_1(\bm{z})$}

\multiput(110,140)(3,0){27}{\circle*{1}}
\put(190,140){\circle*{1}}
\put(193,139.84){\circle*{1}}
\put(196,139.378){\circle*{1}}
\put(199,138.6){\circle*{1}}
\put(202,137.511){\circle*{1}}
\put(205,136.111){\circle*{1}}
\put(208,134.4){\circle*{1}}
\put(211,132.378){\circle*{1}}
\put(214,130.044){\circle*{1}}
\put(217,127.4){\circle*{1}}
\put(220,124.444){\circle*{1}}
\put(223,121.178){\circle*{1}}
\put(226,117.6){\circle*{1}}
\put(229,113.711){\circle*{1}}
\put(232,109.511){\circle*{1}}
\put(235,105){\circle*{1}}
\put(238,100.178){\circle*{1}}
\put(241,95.0444){\circle*{1}}
\put(244,89.6){\circle*{1}}
\put(247,83.844){\circle*{1}}
\put(250,77.777){\circle*{1}}
\put(253,71.4){\circle*{1}}
\put(256,64.7111){\circle*{1}}
\put(259,57.711){\circle*{1}}
\put(262,50.4){\circle*{1}}
\put(265,42.777){\circle*{1}}
\put(268,34.844){\circle*{1}}
\put(271,26.6){\circle*{1}}
\put(274,18.0444){\circle*{1}}
\put(277,9.1777){\circle*{1}}
\put(280,0){\circle*{1}}

\put(282,5){\footnotesize $F_1(\bm{z'})=
{X}_{\bm{x}, \bm{y}}(\tau)+\ii
\dot{X}_{\bm{x}, \bm{y}}(\tau)$}
\put(280,0){\circle*{3}}
\put(280,0){\vector(1,-2){7}}

\end{picture}
     \caption{$F_t(\bm{z})$.}
\label{fig:retractnonpara}
\end{figure}

The next proposition completes the 
proof of the main result, which 
is proved in a similar way to the proof of 
Proposition \ref{prop:extendpara}. 

\begin{proposition}
\label{prop:extendnonpara}
Let us extend the above $F_t$ by 
$$
F_t(\bm{x}+\ii\bm{y})=
\left\{
\begin{array}{cl}
F_t(\bm{z}) \mbox{ (as above)}&
\mbox{ if }\bm{z}\in U_k\setminus U_{k+1}, \\
\bm{z}& \mbox{ if } \bm{z}\in U_{k+1}, 
\end{array}
\right.
$$
Then $F_t(\bm{z})\in U_k$ for any 
$\bm{z}\in U_k$ and hence 
$F_1$ determines a deformation retract 
$U_k\rightarrow U_{k+1}$. 
\end{proposition}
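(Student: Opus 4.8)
The plan is to run the proof of Proposition \ref{prop:extendpara} again at the level of its logical skeleton, replacing the uniform motion $\bm x + s\bm y$ by the LQ-motion $s \mapsto X_{\bm x, \bm y}(s) + \ii\dot X_{\bm x, \bm y}(s)$, and to isolate the two places where the parabolic regime forces a genuinely new argument. First I would reduce to $\bm z = \bm x + \ii\bm y \in U_k \setminus U_{k+1}$, since on $U_{k+1}$ the map is the identity; then $\bm x \in D_k$ and, because $\bm z \notin S_k$, Proposition \ref{prop:sails} gives $\bm y \notin \bR\cdot\bm x$, so that Lemma \ref{lem:lqhit} applies and $\tau = \tau(\bm x, \bm y)$ together with the entire curve $X_{\bm x, \bm y}$ is well defined.

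The real-part condition is the easy half. By construction $\pr_k$ maps $(\bR_{\ge 0})^2 \setminus \{\bm 0\}$ into $D_k$, so the real part $\pr_k(X_{\bm x, \bm y}(t\tau))$ of $F_t(\bm z)$ automatically satisfies $\alpha_1, \ldots, \alpha_k \le 0$; here one only checks that $X_{\bm x, \bm y}(t\tau)$ stays in the closed first quadrant minus the origin for $t \in [0,1]$, which follows from the monotonicity of $\arg X_{\bm x, \bm y}(s)$ together with Lemma \ref{lem:lqhit}. It then remains to prove $F_t(\bm z) \in U = M(\scA) \setminus \bigcup_i S_i$, and, exactly as in Proposition \ref{prop:extendpara}, I would split according to whether the pre-projection point $X_{\bm x, \bm y}(t\tau)$ lies in $D_k$ or not.

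In the first case ($X_{\bm x, \bm y}(t\tau) \in D_k$, no projection) one has $F_t(\bm z) = X_{\bm x, \bm y}(t\tau) + \ii\dot X_{\bm x, \bm y}(t\tau)$. On the linear regime of the LQ-curve this is literally a uniform motion, so Lemma \ref{lem:unifmot} and Lemma \ref{lem:unifmotsail} apply word for word to keep the point in $M(\scA)$ and off every sail, reproducing the parallel argument. On the parabolic regime, where those lemmas are no longer available, I would instead use that the arc has second derivative of constant sign (it is strictly convex) to show that it meets each line $H_i$ transversally, so that whenever $X(t\tau) \in H_i$ one has $\dot X(t\tau) \notin T H_i$ and hence $F_t(\bm z) \notin (H_i)_{\bC}$, and to control, via Proposition \ref{prop:sails} and Proposition \ref{prop:sailsM}, the tangent directions that would otherwise place $F_t(\bm z)$ on a sail. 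In the second case ($X_{\bm x, \bm y}(t\tau) \notin D_k$, projection onto the roof) the real part $\bm x' := \pr_k(X(t\tau))$ lies on $R_k$, hence on some $H_j$ with $j < k$, and $R_k \subset D_k$ again gives the half-plane conditions; so the only possible failure is $F_t(\bm z) \in (H_j)_{\bC}$, i.e. $\dot X(t\tau) \parallel H_j$. This is the analog of the parallel contradiction ``$\bm y \parallel H_j$ and $\bm x \in \{\alpha_j \le 0\}$'': the restriction of $\alpha_j$ to the parabolic part of $X_{\bm x, \bm y}$ is a quadratic in the graphing variable whose vertex sits exactly at the parameter $t\tau$, and I would show, from the explicit form (\ref{eq:lqexpress}) and the orientation of the roof line $H_j$ relative to the convexity of the arc, that this vertex is a minimum of positive value; then $\alpha_j > 0$ along the whole arc, contradicting $\alpha_j(\bm x) \le 0$.

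The main obstacle is precisely this parabolic regime in both cases. The very reason for introducing LQ-curves is that a single uniform motion cannot be both adapted to $\scF^1$ near the flag and aimed at $H_{k+1}$, but the price is that Lemma \ref{lem:unifmotsail}, the engine of the parallel proof, fails there. The substantive work is therefore the convexity and sign bookkeeping: verifying that the convex arc crosses each roof line $H_j$ ($j<k$) exactly once and with the correct orientation, that the induced quadratic $x \mapsto \alpha_j(X_{\bm x,\bm y}(x))$ has positive leading coefficient, and that no sail incidence occurs, so that the clean contradiction of Proposition \ref{prop:extendpara} survives the replacement of straight lines by LQ-curves.
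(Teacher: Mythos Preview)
The paper does not give a detailed proof of this proposition; it states only that it ``is proved in a similar way to the proof of Proposition~\ref{prop:extendpara}.'' Your proposal follows exactly that template and goes further by isolating the parabolic regime as the place where genuinely new work is needed. In that sense your approach coincides with what the paper intends, and your outline is considerably more explicit than what the paper records.

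There is, however, a real gap in your Case~1 argument on the parabolic part. You write that strict convexity of the arc (second derivative of constant sign) implies that it meets each line $H_i$ transversally. That implication is false: every strictly convex arc has tangent lines, and nothing you have said rules out that some $H_i$ is tangent to the parabola $x_1\mapsto -bx_1^2+(a+2b)x_1$ at a point with $x_1\ge 1$. At such a parameter $s_0$ one has $\dot X_{\bm x,\bm y}(s_0)\in T H_i$, hence $X_{\bm x,\bm y}(s_0)+\ii\dot X_{\bm x,\bm y}(s_0)\in (H_i)_\bC$, and $F_t(\bm z)\notin M(\scA)$. Convexity controls the \emph{number} of intersections with a line (at most two, counted with multiplicity), not their transversality. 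To close this you need a geometric input tied to the configuration---for instance an argument locating which $H_j$ can actually be met by the arc in the region $x_1\ge 1$, using the position of $R_k$, of $\scF^1=\{x_1+x_2=1\}$, and the genericity hypotheses of \S\ref{subsec:numbering}---rather than convexity alone. The same caveat applies to your Case~2 vertex argument: the claim that the quadratic $x_1\mapsto \alpha_j(X_{\bm x,\bm y}(x_1))$ has positive leading coefficient and a strictly positive minimum is exactly the orientation/sign bookkeeping you flag at the end, and it does not follow from convexity by itself; it must be extracted from the relative position of $H_j$ (a roof line with $j<k$) and the sign of $b$ in \eqref{eq:lqexpress}. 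Since the paper supplies no details here either, this is the place where the argument genuinely has to be completed.
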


\medskip

\noindent
{\bf Acknowledgement.} 
A part of this work was done while the author 
visited Universit\`a di Pisa and 
Centro di Ricerca Matematica Ennio De Giorgi. 
The author appreciates their supports and hospitality. 
The author greatefully acknowledges the financial support 
of JSPS. 

\noindent
Masahiko Yoshinaga

Department of Mathematics, Kyoto University, 
Sakyo-ku, Kyoto, 606-8502, Japan. 
Email: mhyo@math.kyoto-u.ac.jp

\end{document}